\makeatletter \@addtoreset{equation}{section} \makeatother
\newtheorem{theorem}{Theorem}[section]
\newtheorem{proposition}{Proposition}[section]
\newtheorem{lemma}{Lemma}[section]
\newtheorem{remark}{Remark}[section]
\begin{document}
\title{Boundary concentration of peak solutions  for fractional Schr\"{o}dinger-Poisson system}

\author{Shengbing Deng\footnote{E-mail address:\, {\tt shbdeng@swu.edu.cn} (S. Deng), {\tt xltian@email.swu.edu.cn} (X. Tian)}\ \footnote{The research has been supported by National Natural Science Foundation of China (No. 11971392).}\ \ and Xingliang Tian\\
\footnotesize  School of Mathematics and Statistics, Southwest University,
Chongqing, 400715, P.R. China}

\date{ }
\maketitle

\begin{abstract}
{The goal of this paper is to study the existence of peak solutions for the following fractional Schr\"{o}dinger-Poisson system:
 \begin{eqnarray*}
    \left\{ \arraycolsep=1.5pt
       \begin{array}{ll}
        \varepsilon^{2s}(-\Delta)^{s}u+u+\phi u=u^p,\ \ \ &\ \mbox{in}\ \Omega,\\[2mm]
        (-\Delta)^{s}\phi=u^2,\ \ \ &\ \mbox{in}\ \Omega,\\[2mm]
        u=\phi=0,\ \ \ \ &\ \mbox{in}\ \mathbb{R}^N\setminus \Omega,
        \end{array}
    \right.
    \end{eqnarray*}
    where $s\in(0,1)$, $N>2s$, $p\in (1,\frac{N+2s}{N-2s})$, $\Omega$ is a bounded domain in $\mathbb{R}^N$ with Lipschitz boundary, and $(-\Delta)^{s}$ is the fractional Laplacian operator, $\varepsilon$ is a small positive parameter. By using the Lyapunov-Schmidt reduction method, we construct a single peak solution $(u_\varepsilon,\phi_\varepsilon)$ such that the peak of $u_\varepsilon$ is in the domain but near the boundary. In order to characterize the boundary concentration of solutions,  which concentrates at an approximate distance $\varepsilon^{2/3}$ away from the boundary $\partial\Omega$ as $\varepsilon$ tends to 0, some new
    estimates and analytic technique are used.}

\smallskip
\emph{\bf Keywords:} Fractional Schr\"{o}dinger-Poisson system; Peak solutions; Boundary concentration; Lyapunov-Schmidt reduction.

\smallskip
\emph{\bf 2020 Mathematics Subject Classification:} 35B40, 35J10, 35R11.

\smallskip
\emph{\bf Data availability statement:} Data sharing not applicable to this article as no datasets were generated or analysed during the current study.
\end{abstract}

\section{{\bfseries Introduction}}\label{sectir}

    Let $\Omega$ be a bounded domain in $\mathbb{R}^N$ with Lipschitz boundary, we consider  the following fractional Schr\"{o}dinger-Poisson system
    \begin{eqnarray}\label{P}
    \left\{ \arraycolsep=1.5pt
       \begin{array}{ll}
        \varepsilon^{2s}(-\Delta)^{s}u+u+\phi u=u^p,\ \ \ &\ \mbox{in}\ \Omega,\\[2mm]
        (-\Delta)^{s}\phi=u^2,\ \ \ &\ \mbox{in}\ \Omega,\\[2mm]
        u=\phi=0,\ \ \ \ &\ \mbox{in}\ \mathbb{R}^N\setminus \Omega,
        \end{array}
    \right.
    \end{eqnarray}
    where $s\in(0,1)$,  $p\in (1,\frac{N+2s}{N-2s})$,  $N>2s$, $\varepsilon$ is a small positive parameter, 
    and $(-\Delta)^{s}$ is the fractional Laplacian operator defined by
    \begin{equation*}
    (-\Delta)^{s}w(x):=C_{N,s}\ {\rm P.V.} \int_{\mathbb{R}^N}\frac{w(x)-w(y)}{|x-y|^{N+2s}}dy=2\lim_{\epsilon\rightarrow 0^+}\int_{\mathbb{R}^N\backslash B_{\epsilon}(x)}\frac{w(x)-w(y)}{|x-y|^{N+2s}}dy,
    \end{equation*}
    where $C_{N,s}=2^{2s-1}\pi^{-\frac{N}{2}}\frac{\Gamma(\frac{N+2s}{2})}{\Gamma(-s)}$. From the definition of $(-\Delta)^s$, we can know that the fractional Laplacian problems are nonlocal. The fractional spaces and the corresponding nonlocal equations have many important applications in various fields of science and engineering, we refer to \cite{dpv} and references therein.

    This paper deals with the {\em semiclassical limit} of system (\ref{P}), i.e. it is concerned with the problem of finding nontrivial solutions and studying their asymptotic behavior when $\varepsilon\to 0^+$; hence such solutions are usually referred to as {\em semiclassical} ones. The analysis of the Schr\"{o}dinger-Poisson equations in the limit $\varepsilon\to 0^+$ is not only a challenging mathematical task, but also of some relevance for understanding of a wide class of quantum phenomena. Indeed, according to the {\em correspondence principle}, letting $\varepsilon$ tends to zero in the Schr\"{o}dinger equation formally describes the transition from quantum mechanics to classical mechanics; it is therefore interesting to study which kind of semiclassical phenomena system (\ref{P}) exhibits.

    When $s=1$ and $\Omega=\mathbb{R}^3$, (\ref{P}) has been investigated by \cite{rv11} and also \cite{lz17} for $\Omega=\mathbb{R}^N(3\leq N\leq 6)$, and \cite{dpv11,g21,iv08,iv11,wxzc13} for  different potentials.
    When $s\in (0,1)$ and $\Omega=\mathbb{R}^3$, (\ref{P}) has received much attentions,  see \cite{hp2020,l161,l2016,l17,ln17,ln2018,lnp2017,lyy20} and references therein. The previous results are constructing single or multiply peak solutions whose peaks concentrate near the critical points of potentials.

    When $s=1$, $\Omega\subset\mathbb{R}^3$ and $V\equiv1$, the system (\ref{P}) is related to the  follwoing Maxwell-Schr\"{o}dinger system
    \begin{eqnarray}\label{Pms}
    \left\{ \arraycolsep=1.5pt
       \begin{array}{ll}
        -\varepsilon^2 \Delta u+u+\omega\phi u=\gamma u^p,\ \ \ &\ \mbox{in}\ \Omega,\\[2mm]
        -\Delta \phi=4\pi \omega u^2,\ \ \ &\ \mbox{in}\ \Omega,\\[2mm]
        u=\phi=0,\ \ \ \ &\ \mbox{on}\ \partial\Omega,
        \end{array}
    \right.
    \end{eqnarray}
    where $\varepsilon=\frac{\hbar}{\sqrt{2m}}>0$, $m$ is the mass of the particle and $\hbar$ is the Planck's constant, $\omega>0$ denotes the electric charge of the partial and $\gamma>0$, $p>1$. This system arises in Quantum Mechanics: in 1998 Benci and Fortunato \cite{bf98} have proposed it as a model describing the interaction of a changed partial with the electrostatic field.  The unknowns of the system are the wave function $u$ associated to the partial and the electric potential $\phi$. The presence of the nonlinear term in (\ref{Pms}) simulates the interaction effect among many partial. See \cite{m61} for a deeper analysis on the physical motivation of this system.

    The study of the semiclassical limit for Schr\"{o}dinger-Poisson system has been considered, such as \cite{dw05,dw06,dw06cvpde,dw06jde,r05} with $\gamma=\varepsilon^{(p-1)/2}$ and $1<p<\frac{11}{7}$.
    In such papers problem (\ref{Pms}) is studied and it is proved that the radial solutions exhibit some kind of notable concentration behavior: their form consists of very sharp peaks which become highly concentrated when $\varepsilon$ is small. More precisely, when $\Omega=\mathbb{R}^3$, problem (\ref{Pms}) is know to have {\em clustered} solutions, i.e. a combination of several interacting peaks concentrating at the same point as $\varepsilon\to 0^+$ \cite{dw05,dw06cvpde,r05}.
    When $\Omega$ is the unit ball $B_1$ of $\mathbb{R}^3$, D'Aprile and Wei \cite{dw06} constructed a radial solution $(u_\varepsilon,\phi_\varepsilon)$ such that $u_\varepsilon$ concentrates at a distance $(\varepsilon/2)|\log\varepsilon|$ away from the boundary $\partial B_1$ as $\varepsilon\to 0^+$, and with additional conditions about $\omega$, D'Aprile and Wei \cite{dw06jde} also have constructed a radial solution $(u_\varepsilon,\phi_\varepsilon)$ such that $u_\varepsilon$ concentrates on a sphere in the interior of $B_1$ as $\varepsilon\to 0^+$.
    Moreover, in \cite{dw07} {\em clusters} are proved to exist for (\ref{Pms}) with general nonlinearities in a bounded and smooth domain $\Omega\subset\mathbb{R}^3$ and the location of the peaks is identified asymptotically in the terms of the Robin's function of $\Omega$. What's more, when $\Omega\subset\mathbb{R}^3$ is a smooth and bounded domain and $\gamma=\omega=1$ with general nonlinearities, in \cite{dw06pisa} D'Aprile and Wei have proved that (\ref{Pms}) has a least-energy solution $u_\varepsilon$ which develops, as $\varepsilon\to 0^+$, a single spike layer located near the boundary. And when $\Omega=\mathbb{R}^N(3\leq N\leq 6)$ and $\gamma=\omega=1$ with general nonlinearities, in \cite{iv15} Ianni and Vaira have constructed infinite non-radial and sign-changing peak solutions for system (\ref{Pms}) and the peaks are displaced in suitable symmetric configurations and collapse to the same point as $\varepsilon\to 0^+$. For more results about systems, such as FitzHugh-Nagumo type, see \cite{dy1999,dy1,dy2005,dy2006,dy20062,dy20063,ww}.

    On the other hand, consider the following Schr\"{o}dinger equation
    \begin{equation}\label{Psp}
        \varepsilon^{2s}(-\Delta)^{s}u+V(x)u=u^p\quad \mbox{in}\quad \Omega\subset\mathbb{R}^N
    \end{equation}
    with Dirichlet or Neumann boundary conditions, where $s\in (0,1]$, $1<p<\frac{N+2s}{N-2s}$ if $N>2s$ and $1<p<\infty$ if $N\leq2s$, and $V:\Omega\to\mathbb{R}$ is an assigned potential, in order to understand where concentration occurs and what the profile of the solutions should be.
    When $s=1$, there exist solutions with an arbitrary large number of interior peaks near the critical points of $V$ (see \cite{abc97,ams01,df96,df97,df98,jt04,kw00}) or, in the case $V$$\equiv$constant, near the critical points of the distance from the boundary (see \cite{dy99,gpw00,nw,wei1996}), while spikes at the boundary are founded at the critical points of the mean curvature (see e.g. \cite{dfw99,li98,wei97}), and there is an interesting result that if $\mathbb{R}^N\backslash\Omega$ is a bounded open set, i.e. $\Omega$ is an unbounded closed domain, Dancer and Yan \cite{dy2} proved that (\ref{Psp}) has at least one two-peak solution. Moreover, they also showed if $\mathbb{R}^N\backslash\Omega$ is convex, then both of the peaks of any two-peak solution for (\ref{Psp}) must tend to infinity as $\varepsilon\to 0$, and (\ref{Psp}) has no single peak solution if $\mathbb{R}^N\backslash\Omega$ is convex.
    When $s\in (0,1)$ with zero Dirichlet datum, there exist single or multiply peak solutions or cluster solutions which concentrate at the non-degenerate critical points or local extreme points of $V$ (see \cite{ckl2014,cz,ddw2014,ll19,sz2015}) or, in the case $V$$\equiv$constant. However, there are few literatures about peak solutions for standard equation (\ref{Psp}), D\'{a}vila et al. \cite{dddv}  constructed a single peak solution whose peak is strictly in the domain, and see \cite{c18,nxz18} with Neumann type boundary condition.

    Now, let us go on stating our problem. It's well known that when $N\leq 6s$ (see Section \ref{sectrt}),  for each $\varphi, \psi$, let $\Phi[\varphi\psi]$ be the unique solution of the the following problem
    \begin{eqnarray}\label{Pgb}
    \left\{
       \begin{array}{ll}
        (-\Delta)^{s}\phi= \varphi\psi\ \ \ &\ \mbox{in}\ \Omega,\\[2mm]
        \phi=0\ \ \ \ &\ \mbox{in}\ \mathbb{R}^N\setminus \Omega.
        \end{array}
    \right.
    \end{eqnarray}
    Then we can see that (\ref{P}) is equivalent to the following fractional Schr\"{o}dinger problem
    \begin{eqnarray}\label{Pb}
    \left\{
       \begin{array}{ll}
        \varepsilon^{2s}(-\Delta)^{s}u + u+ \Phi[u^2] u=u^p\ \ \ &\ \mbox{in}\ \Omega,\\[2mm]
        u=0\ \ \ \ &\ \mbox{in}\ \mathbb{R}^N\setminus \Omega.
        \end{array}
    \right.
    \end{eqnarray}

    Let us consider the following limiting problem:
    \begin{eqnarray}\label{Pl}
        (-\Delta)^{s}U+U=U^p\   \ \mbox{in}\ \mathbb{R}^N,\ \
        U\in H^s(\mathbb{R}^N),\ \ U(0)=\max_{y\in\mathbb{R}^N}U(y).
    \end{eqnarray}
    The uniqueness, decaying behavior and non-degeneration of the positive solutions to problem (\ref{Pl}) was first obtained by Frank, Lenzmann \cite{fl2013} in $\mathbb{R}$. Later, Frank, Lenzmann and Silvestre \cite{fls2016} obtained the following important result.

    {\bf Theorem~A.}   {\it When $p\in(1,\frac{N+2s}{N-2s})$ for $N>2s$ or $p\in(1,\infty)$ for $N\leq2s$, problem {\rm (\ref{Pl})} has a unique (up to translation) positive solution $U$ which is radial and strictly decreasing in $|x|$. Moreover, there exist $0<\alpha\leq \beta$ such that
        \begin{equation}\label{bu}
        \frac{\alpha}{1+|x|^{N+2s}}\leq U(x) \leq \frac{\beta}{1+|x|^{N+2s}},\ \ \forall x\in \mathbb{R}^N.
        \end{equation}
    Furthermore, $U$ is non-degenerate in the sense that the linear operator defined by
        \begin{equation*}
        L:=(-\Delta)^{s}+U-pU^{p-1}
        \end{equation*}
    satisfies
        \begin{equation}\label{und}
        \ker L\cap L^2(\mathbb{R}^N)={\rm Span}\left\{\frac{\partial U}{\partial x_1},\frac{\partial U}{\partial x_2},\ldots,\frac{\partial U}{\partial x_N}\right\}.
        \end{equation}}
    Then, we begin to construct the approximate solution as follows: denote $U_{\varepsilon,\xi}(x)=U(\frac{x-\xi}{\varepsilon})$ and let $P_{\varepsilon,\Omega}U_{\varepsilon,\xi}$ be the solution of
    \begin{eqnarray}\label{Pp}
    \left\{
       \begin{array}{ll}
        \varepsilon^{2s}(-\Delta)^{s}u+  u=U^p_{\varepsilon,\xi}\ \ \ &\ \mbox{in}\ \Omega,\\[2mm]
        u=0\ \ \ \ &\ \mbox{in}\ \mathbb{R}^N\setminus \Omega.
        \end{array}
    \right.
    \end{eqnarray}

    Our main result of this paper can be stated as following.
    \begin{theorem}\label{thmcs}
    Suppose that $s\in (0,1)$, $2s<N\leq 6s$ 
    and $p\in (1,\frac{N+2s}{N-2s})$. Then there exists $\varepsilon_0>0$ such that for each $\varepsilon\in (0,\varepsilon_0)$, problem {\rm (\ref{Pb})} has a solution $u_{\varepsilon}$ of the form
        \begin{equation}\label{bsu}
        u_{\varepsilon}=P_{\varepsilon,\Omega}U_{\varepsilon,\xi_{\varepsilon}}+\omega_{\varepsilon,\xi_{\varepsilon}},
        \end{equation}
    where $\xi_{\varepsilon}\in\Omega$ satisfies $d(\xi_{\varepsilon},\partial\Omega)\rightarrow 0$,
    $\frac{d(\xi_{\varepsilon},\partial\Omega)}{\varepsilon}\rightarrow +\infty$,
    and $\omega_{\varepsilon,\xi_{\varepsilon}}$ satisfies

    \begin{eqnarray*}
    \int_{\Omega}(\varepsilon^{2s}|(-\Delta)^{\frac{s}{2}}\omega_{\varepsilon,\xi_{\varepsilon}}|^2+ \omega^2_{\varepsilon,\xi_{\varepsilon}})
    =\varepsilon^{N}o(\varepsilon^{\frac{N+4s}{3}}).
    \end{eqnarray*}
    More precisely, $d(\xi_{\varepsilon},\partial\Omega)\in (\varepsilon^{\frac{2}{3}+\theta}, \varepsilon^{\frac{2}{3}-\theta})$ for any small $\theta>0.$
    \end{theorem}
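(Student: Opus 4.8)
The plan is to carry out a finite-dimensional Lyapunov--Schmidt reduction for the equivalent scalar equation (\ref{Pb}). One works in $H^s_0(\Omega)$, the subspace of $H^s(\mathbb{R}^N)$ of functions vanishing in $\mathbb{R}^N\setminus\Omega$, equipped with the rescaled inner product $\langle u,v\rangle_\varepsilon=\int_\Omega(\varepsilon^{2s}(-\Delta)^{s/2}u\,(-\Delta)^{s/2}v+uv)$ and norm $\|\cdot\|_\varepsilon$, so that solutions of (\ref{Pb}) are exactly the critical points of $J_\varepsilon(u)=\frac{1}{2}\|u\|_\varepsilon^2+\frac{1}{4}\int_\Omega\Phi[u^2]u^2-\frac{1}{p+1}\int_\Omega(u^+)^{p+1}$, with $\Phi[\cdot]$ as in (\ref{Pgb}). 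The restriction $N\le 6s$ is precisely what makes the quartic term $\int_\Omega\Phi[u^2]u^2$ finite and of class $C^2$ on $H^s_0(\Omega)$ (by the Sobolev embedding of $H^s$ together with the mapping properties of $(-\Delta)^{-s}$ on bounded domains). The solution is sought in the form $u_\varepsilon=P_{\varepsilon,\Omega}U_{\varepsilon,\xi}+\omega$, with $\xi\in\Omega$ close to $\partial\Omega$ and $\omega$ in the $\langle\cdot,\cdot\rangle_\varepsilon$-orthogonal complement $K_{\varepsilon,\xi}^\perp$ of the approximate kernel $K_{\varepsilon,\xi}=\operatorname{span}\{\partial_{\xi_1}P_{\varepsilon,\Omega}U_{\varepsilon,\xi},\dots,\partial_{\xi_N}P_{\varepsilon,\Omega}U_{\varepsilon,\xi}\}$, splitting $J_\varepsilon'(u_\varepsilon)=0$ into an infinite-dimensional equation, solved for $\omega=\omega_{\varepsilon,\xi}$, and an $N$-dimensional equation for $\xi$.

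The first block of work is a package of sharp estimates on the building blocks, all governed by $d:=d(\xi,\partial\Omega)$ and by $\varepsilon/d$. Writing $\varphi_{\varepsilon,\xi}:=U_{\varepsilon,\xi}-P_{\varepsilon,\Omega}U_{\varepsilon,\xi}$, one has $\varepsilon^{2s}(-\Delta)^s\varphi_{\varepsilon,\xi}+\varphi_{\varepsilon,\xi}=0$ in $\Omega$ and $\varphi_{\varepsilon,\xi}=U_{\varepsilon,\xi}\ge0$ in $\mathbb{R}^N\setminus\Omega$; comparing, through the maximum principle for $\varepsilon^{2s}(-\Delta)^s+1$, against a reflected copy of $U_{\varepsilon,\xi}$ and using the two-sided bound (\ref{bu}) yields pointwise and energy estimates for $\varphi_{\varepsilon,\xi}$. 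Simultaneously one compares the Poisson potential $\Phi[U_{\varepsilon,\xi}^2]$ with its whole-space analogue $\Phi_\infty[U_{\varepsilon,\xi}^2]$, which after rescaling is explicit, namely $\Phi_\infty[U_{\varepsilon,\xi}^2](x)=\varepsilon^{2s}\widetilde{\Phi}((x-\xi)/\varepsilon)$ where $(-\Delta)^s\widetilde{\Phi}=U^2$ in $\mathbb{R}^N$, and controls the difference by means of the regular part of the fractional Green function of $\Omega$ at points at distance $\sim d$ from $\partial\Omega$. These are the new estimates announced in the introduction; it is here that the nonlocality of $(-\Delta)^s$ and the heavy polynomial tail $U(x)\sim|x|^{-(N+2s)}$ of Theorem~A truly enter, by contrast with the exponentially localized case $s=1$.

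Then comes the reduction itself. By the non-degeneracy (\ref{und}), suitably rescaled, the linearization of $J_\varepsilon$ at $P_{\varepsilon,\Omega}U_{\varepsilon,\xi}$ restricted to $K_{\varepsilon,\xi}^\perp$ is invertible, with inverse bounded uniformly in $\varepsilon$ and in $\xi$ ranging over the admissible regime $\varepsilon\ll d\ll1$, the quartic Poisson term contributing there only a perturbation of order $\varepsilon^{2s}$, again because $N\le 6s$. Using the first block one estimates $\|J_\varepsilon'(P_{\varepsilon,\Omega}U_{\varepsilon,\xi})\|_{(H^s_0(\Omega))^*}$: it splits into the nonlinear mismatch $(P_{\varepsilon,\Omega}U_{\varepsilon,\xi})^p-U_{\varepsilon,\xi}^p$, controlled by $\varphi_{\varepsilon,\xi}$, and the genuinely new contribution $\Phi[(P_{\varepsilon,\Omega}U_{\varepsilon,\xi})^2]\,P_{\varepsilon,\Omega}U_{\varepsilon,\xi}$, of size $\sim\varepsilon^{2s}$ near the peak. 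A Banach fixed point argument then produces, for each admissible $\xi$, a unique small $\omega_{\varepsilon,\xi}\in K_{\varepsilon,\xi}^\perp$ solving the infinite-dimensional equation, depending on $\xi$ in a $C^1$ way, and satisfying $\|\omega_{\varepsilon,\xi}\|_\varepsilon^2=\varepsilon^N o(\varepsilon^{(N+4s)/3})$ once $d\in(\varepsilon^{2/3+\theta},\varepsilon^{2/3-\theta})$.

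It remains to analyze the reduced functional $M_\varepsilon(\xi):=J_\varepsilon(P_{\varepsilon,\Omega}U_{\varepsilon,\xi}+\omega_{\varepsilon,\xi})$, whose critical points in $\Omega$ are genuine solutions of (\ref{Pb}) of the form (\ref{bsu}). The expansion shows that $\varepsilon^{-N}M_\varepsilon(\xi)$ is a $\xi$-independent constant (the energy of $U$, plus the $\xi$-independent leading part of the Poisson energy) together with two competing $d$-dependent contributions: a positive one coming from the Dirichlet projection error $\varphi_{\varepsilon,\xi}$, decreasing in $d$, which pushes the peak away from $\partial\Omega$, against a negative one coming from the deficit between $\Phi$ and $\Phi_\infty$ for a bump sitting near the boundary, increasing in $d$, which pulls the peak toward $\partial\Omega$; everything else, including the $\omega_{\varepsilon,\xi}$-corrections, is of strictly lower order. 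With the sharp orders provided by the second block, the sum of the two competing terms has a strict interior minimum, along the inner normal, at a distance $\asymp\varepsilon^{2/3}$; consequently, minimizing $M_\varepsilon$ over the compact annular region $\{\varepsilon^{2/3+\theta}\le d(\xi,\partial\Omega)\le\varepsilon^{2/3-\theta}\}$, on whose two boundary components the value of $M_\varepsilon$ exceeds some interior value, produces a critical point $\xi_\varepsilon$ in the interior, hence the solution $u_\varepsilon$, together with all the stated properties of $\xi_\varepsilon$ and $\omega_{\varepsilon,\xi_\varepsilon}$. The main obstacle is exactly this last step together with the second: extracting the two leading $d$-dependent terms of $M_\varepsilon$ with sharp constants and sufficiently small remainders — equivalently, quantifying how the fractional Green function and Poisson kernel of a merely Lipschitz domain respond to a mass concentrated at the $\varepsilon$-scale at distance $d$ from $\partial\Omega$ — since this is what both forces the balance to occur at the scale $\varepsilon^{2/3}$ and keeps the full remainder below the $o(\varepsilon^{(N+4s)/3})$ threshold, and is considerably more delicate than in the local case $s=1$, where the analogous projection term is exponentially small.
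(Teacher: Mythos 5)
Your proposal follows essentially the same route as the paper: the same variational/Lyapunov--Schmidt framework in $\|\cdot\|_\varepsilon$, the same identification of the two competing boundary terms (the positive projection-error term $\tau_{\varepsilon,\xi}=\int_\Omega U_{\varepsilon,\xi}^p(U_{\varepsilon,\xi}-P_{\varepsilon,\Omega}U_{\varepsilon,\xi})\asymp\varepsilon^N(\varepsilon/d)^{N+4s}$ versus the negative Green-function deficit $-\tfrac14\varepsilon^{2N}B^2H(\xi,\xi)\asymp-\varepsilon^{2N}d^{-(N-2s)}$), the same balance at $d\asymp\varepsilon^{2/3}$, and the same minimization of the reduced energy over the annulus $\{\varepsilon^{1-\varpi}\le d\le\varepsilon^{1-\mu}\}$. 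The only notable divergence is a matter of technique inside one lemma: you propose a reflected-barrier comparison for the projection error, whereas the paper obtains the sharp two-sided bound on $\tau_{\varepsilon,\xi}$ via the nonlocal normal derivative $\mathcal{N}_s$ and the fractional Green formula of Dipierro--Ros-Oton--Valdinoci.
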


    \begin{remark}\label{remthmfz}\rm
    We remark that the existence of peak solutions for (\ref{Pb}) with $s=1$ and more general nonlinearities, has been obtained by D'Aprile and Wei \cite{dw06pisa}, and our proof is mainly inspired by their work, and also \cite{dy2,dy1}.
    In contrast to previous literatures for the Laplace case, the fractional Laplacian $(-\Delta)^s$ with $0<s<1$ on $\mathbb{R}^N$ is nonlocal, standard techniques for Laplacian may not be carried out directly. Actually, the fractional Laplacian has some features which are essentially different from the Laplacian. For example, the ground state for (\ref{Pl}) with $0<s<1$ decays algebraically at infinity, while that of Laplace problems  decays exponentially at infinity. Hence, in deal with the fractional Laplacian, new techniques are needed to be developed. In particular, thanks to the work of the authors of \cite{drv} who have established the Green's formula corresponding to fractional Laplacian $(-\Delta)^s$,  we will establish new basic estimates and give a precise estimation of the energy functional at the approximate solutions.
    \end{remark}

    This paper is arranged as follows. In Section \ref{sectrt}, we give the necessary functional settings in order to treat our problem and introduce the Lyapunov-Schmidt reduction method as clearly as possible. In Section \ref{sectpe}, we give some preliminary estimates particularly for $\Phi[U^2_{\varepsilon,\xi}]$. As we will see that the main contributions to the energy functional of (\ref{Pb}) are from the term $\int_{\Omega}U^2_{\varepsilon,\xi}\Phi[U_{\varepsilon,\xi}^2]$ as well as the geometry of the domain. In Section \ref{sectcet}, we will calculate  the energy functional $J_\varepsilon(P_{\varepsilon,\Omega}U_{\varepsilon,\xi})$ associated to (\ref{Pb}), particularly for the estimate of $\int_{\Omega}(U_{\varepsilon,\xi})^p\left(U_{\varepsilon,\xi}- P_{\varepsilon,\Omega}U_{\varepsilon,\xi}\right)$.
    Then, in Section \ref{secttl}, we prove some technical lemmas and develop the reduction. Finally, in Section \ref{secteps}, we will prove Theorem \ref{thmcs} and give some remarks. 

    \quad\noindent{\bfseries Notation.} The symbols $c,C$ will denote positive constants whose exact value may change from line to line still being independent of $\varepsilon$.

\section{{\bfseries Functional setting and the reduction method}}\label{sectrt}

    In this section, we first recall some useful information of the fractional order Sobolev spaces.  We refer to \cite{ada,dpv,sh,cz} for more details.
    Consider the Schwartz space $\mathcal{W}$ of rapidly decaying $C^\infty$ functions on $\mathbb{R}^N$. The topology of this space is generated by the seminorms
    \begin{equation*}
    P_k(\varphi)=\sup_{x\in\mathbb{R}^N}(1+|x|^k)\sum_{|\nu|\leq k}\left|D^\nu \varphi(x)\right|,\ \ k=0,1,2\ldots,
    \end{equation*}
    where $\varphi\in \mathcal{W}$. Let $\mathcal{W}'$ be the set of all tempered distributions, which is the topological dual of $\mathcal{W}$.  As usual, for any $\varphi\in \mathcal{W}$, we denote by
    \begin{equation*}
    \mathcal{F}\varphi(\rho)=(2\pi)^{-\frac{N}{2}}\int_{\mathbb{R}^N}e^{-i\rho \cdot x}\varphi(x)dx,
    \end{equation*}
    the Fourier transformation of $\varphi$ and we recall that one can extend $\mathcal{F}$ from $\mathcal{W}$ to $\mathcal{W}'$.

    We define the classical fractional Sobolev space $H^s(\mathbb{R}^N)$,
    \begin{equation*}
    \begin{split}
    H^s(\mathbb{R}^N):=& \left\{u\in L^2(\mathbb{R}^N): \int_{\mathbb{R}^N}(1+|\rho|^{2s})|\mathcal{F}u(\rho)|^2d\rho<\infty \right\}  \\
    =& \left\{u\in L^2(\mathbb{R}^N): \frac{|u(x)-u(y)|}{|x-y|^{\frac{N}{2}+s}}\in L^2(\mathbb{R}^N\times \mathbb{R}^N)\right\},
    \end{split}
    \end{equation*}
    endowed with the norm
    \begin{equation*}
    \|u\|^2_{H^s(\mathbb{R}^N)}=\|u\|^2_{L^2(\mathbb{R}^N)}
    +\frac{C_{N,s}}{2}\int\int_{\mathbb{R}^N\times \mathbb{R}^N}\frac{|u(x)-u(y)|^2}{|x-y|^{N+2s}} dxdy.
    \end{equation*}
    It is clear that $H^s(\mathbb{R}^N)$ is a Hilbert space. We define also the $H^s_0(\Omega)$ space,
    \begin{equation}\label{defh0s}
    H^s_0(\Omega):=\left\{u\in H^s(\mathbb{R}^N): u=0\ \ \mbox{in} \ \ \mathbb{R}^N\backslash\Omega\right\},
    \end{equation}
    endowed with the norm
    \begin{equation*}
    \|u\|^2_{H^s_0(\Omega)}=\|u\|^2_{L^2(\Omega)}
    +\int\int_{\mathcal{D}_{\Omega}}\frac{|u(x)-u(y)|^2}{|x-y|^{N+2s}} dxdy,
    \end{equation*}
    where the set $\mathcal{D}_{\Omega}$ is given by $\mathcal{D}_{\Omega}:=(\mathbb{R}^N\times \mathbb{R}^N)\backslash (\mathcal{C}\Omega\times\mathcal{C}\Omega),\ \ \mathcal{C}\Omega=\mathbb{R}^N\backslash \Omega.$ Denote
    \begin{equation}\label{defuv}
    \langle u,v \rangle_{\varepsilon}=\int_{\mathbb{R}^N}\left(\varepsilon^{2s}(-\Delta)^{\frac{s}{2}}u(-\Delta)^{\frac{s}{2}}v+  uv\right)dx,
    \end{equation}
    and $\|u\|^2_{\varepsilon}=\langle u,u \rangle_{\varepsilon}$. Let $\mathbb{H}$ be the completion of the space $C^{\infty}_0(\Omega)$ with respect to the norm $\|\cdot\|_\varepsilon$ defined as previous. Noticing that for all $u, v\in \mathbb{H}$, we have
    \begin{equation*}
    \int_{\Omega}\left(\varepsilon^{2s}v(-\Delta)^{s}u +  uv\right)dx
    =\int_{\mathbb{R}^N}\left(\varepsilon^{2s}(-\Delta)^{\frac{s}{2}}u(-\Delta)^{\frac{s}{2}}v+  uv\right)dx.
    \end{equation*}
    Thus
    \begin{equation}\label{defnuv}
    \|u\|^2_{\varepsilon}=\int_{\Omega}\left(\varepsilon^{2s}|(-\Delta)^{\frac{s}{2}}u|^2 +  u^2\right)dx,
    \end{equation}
    that justify in some sense our choice of the norm $\|\cdot\|_\varepsilon$. Since the embedding $H^s_0(\Omega_{\varepsilon,\xi})\hookrightarrow L^q(\Omega_{\varepsilon,\xi})$ is continuous for $2\leq q\leq\frac{2N}{N-2s}$, then for each $u\in \mathbb{H}$, and denote $\tilde{u}(x)=u(\varepsilon x+\xi)$, we have
    \begin{equation}\label{deflqfh}
    \begin{split}
    \int_{\Omega}|u|^{q} dx
    = & \varepsilon^N\int_{\Omega_{\varepsilon,\xi}}|\tilde{u}|^{q} dx
    \leq  C \varepsilon^N\left(\int_{\Omega_{\varepsilon,\xi}}\left(|(-\Delta)^{\frac{s}{2}}\tilde{u}|^2 +  |\tilde{u}|^2\right) dx\right)^{\frac{q}{2}}  \\
    = & C \varepsilon^{N}\left(\varepsilon^{-N}\int_{\Omega}\left(\varepsilon^{2s}|(-\Delta)^{\frac{s}{2}}u|^2 +  |u|^2\right) dx\right)^{\frac{q}{2}}
    =  C \varepsilon^{(1-\frac{q}{2})N}\|u\|^q_{\varepsilon},
    \end{split}
    \end{equation}
    that is, $|u|_q\leq C\varepsilon^{(\frac{1}{q}-\frac{1}{2})N}\|u\|_{\varepsilon}$ for all $2\leq q\leq\frac{2N}{N-2s}$. Moreover, $|u|_q \leq C\|u\|_{\varepsilon}$ for all $q\in (1,2]$.

    For $\varphi,\psi\in H^s(\mathbb{R}^N)$, the linear functional $\mathcal{L}_{\varphi,\psi}$ defined in $\mathbb{H}$ by
    \begin{equation*}
    \mathcal{L}_{\varphi,\psi} (v)=\int_{\Omega}\varphi\psi vdx.
    \end{equation*}
    By the H\"{o}lder's inequality, we have
    \begin{equation}\label{deflu}
    \begin{split}
    |\mathcal{L}_{\varphi,\psi} (v)|
    \leq & \left(\int_{\Omega}|\varphi\psi|^{\frac{2N}{N+2s}}dx\right)^{\frac{N+2s}{2N}}
    \left(\int_{\Omega}|v|^{\frac{2N}{N-2s}}dx\right)^{\frac{N-2s}{2N}}  \\
    \leq & \left(\int_{\Omega}|\varphi|^{\frac{4N}{N+2s}}dx\right)^{\frac{N+2s}{4N}}
    \left(\int_{\Omega}|\psi|^{\frac{4N}{N+2s}}dx\right)^{\frac{N+2s}{4N}}
    \left(\int_{\Omega}|v|^{\frac{2N}{N-2s}}dx\right)^{\frac{N-2s}{2N}}  \\
    \leq & C \varepsilon^{-s}\|\varphi\|_{H^s(\mathbb{R}^N)}\|\psi\|_{H^s(\mathbb{R}^N)} \|v\|_\varepsilon,
    \end{split}
    \end{equation}
    where we used (\ref{deflqfh}) for $\frac{4N}{N+2s}\leq \frac{2N}{N-2s}$, i.e. $N\leq 6s$. Hence, by the Lax-Milgram theorem, there exists a unique $\Phi[\varphi\psi]\in \mathbb{H}$ such that
    \begin{equation}\label{deflue}
    \int_{\Omega}(-\Delta)^{\frac{s}{2}}\Phi[\varphi\psi](-\Delta)^{\frac{s}{2}}vdx=\int_{\Omega}\varphi\psi vdx,\quad v\in \mathbb{H},
    \end{equation}
    that is $\Phi[\varphi\psi]$ is a weak solution of
    \begin{equation*}
    (-\Delta)^{s}\phi=\varphi\psi\quad\mbox{in}\quad\Omega,\quad \phi\in \mathbb{H}.
    \end{equation*}
    From (\ref{deflu}) and (\ref{deflue}), we have that for each $u\in \mathbb{H}$,
    \begin{equation*}
    \begin{split}
    \|\Phi[u^2]\|^2_\varepsilon \leq & C \int_{\Omega}|(-\Delta)^{\frac{s}{2}}\Phi[u^2]|^2dx
    =  C \int_{\Omega}\Phi[u^2]u^2dx
    \leq   C \varepsilon^{-\frac{N}{2}}\|u\|^2_\varepsilon \|\Phi[u^2]\|_\varepsilon,  \\
    \end{split}
    \end{equation*}
    that is
    \begin{equation*}
    \begin{split}
    \|\Phi[u^2]\|_\varepsilon \leq & C \varepsilon^{-\frac{N}{2}}\|u\|^2_\varepsilon,\quad \mbox{if}\quad N\leq 6s,\\
    \end{split}
    \end{equation*}
    Thus
    \begin{equation}\label{epuud}
    \begin{split}
    \int_{\Omega}\Phi[u^2]u^2dx\leq & C \varepsilon^{-N}\|u\|^4_\varepsilon,\quad \mbox{if}\quad N\leq 6s.\\
    \end{split}
    \end{equation}
    Moreover, let us consider the problem
    \begin{equation*}
    (-\Delta)^{s}W=\varphi\psi\quad\mbox{in}\quad\mathbb{R}^N,\quad W\in H^s(\mathbb{R}^N),
    \end{equation*}
    then by the representation formula holds
    \begin{equation}\label{defprf2}
    W[\varphi\psi](x)=c_{N,s} \int_{\mathbb{R}^N}\frac{\varphi(y)\psi(y)}{|x-y|^{N-2s}}dy, \ \ x\in\mathbb{R}^N,
    \end{equation}
    which is called $s$-Riesz potential, where $c_{N,s}$ is a constant depending only on $N$ and $s$. Thus, by the comparison theorem, we have
    \begin{equation}\label{ctpw}
    \Phi[\varphi\psi]\leq \Phi[|\varphi\psi|]\leq W[|\varphi\psi|]\quad\mbox{for all}\quad\varphi,\psi\in H^s(\mathbb{R}^N).
    \end{equation}

    \begin{proposition}\label{prohlsi}
    (Hardy-Littlewood-Sobolev inequality \cite{ll01}) Let $r,t>1$ and $0<\mu<N$ with $\frac{1}{r}+\frac{1}{t}+\frac{\mu}{N}=2$, $u\in L^r(\mathbb{R}^N)$ and $v\in L^t(\mathbb{R}^N)$. There exists a sharp constant $C(r,t,\mu,N)$, independent of $u$ and $v$, such that
    \begin{equation*}\label{hlsi}
    \int_{\mathbb{R}^N}\left[\frac{1}{|x|^\mu}\ast u(x)\right]v(x)dx\leq C(r,t,\mu,N)|u|_r|v|_t.
    \end{equation*}
    \end{proposition}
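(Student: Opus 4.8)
The plan is to reduce the bilinear statement to a single linear mapping property by duality and then establish the latter by a splitting-plus-interpolation argument. Since $\frac1r+\frac1t+\frac\mu N=2$, the conjugate exponent $t'$ of $t$ satisfies $\frac1{t'}=\frac1r+\frac\mu N-1=:\frac1q$, and by H\"older's inequality
\begin{equation*}
\int_{\R^N}\left[\frac1{|x|^\mu}\ast u(x)\right]v(x)\,dx\le\Big|\tfrac1{|x|^\mu}\ast u\Big|_{q}\,|v|_t,
\end{equation*}
so it is enough to prove that convolution with the kernel $K(x):=|x|^{-\mu}$ maps $L^r(\R^N)$ boundedly into $L^q(\R^N)$, with $\frac1q=\frac1r-\frac{N-\mu}{N}$. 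One first checks that the hypotheses $r,t\in(1,\infty)$, $0<\mu<N$ force $r\in\big(1,\tfrac N{N-\mu}\big)$, hence $r'\in\big(\tfrac N\mu,\infty\big)$ and $q\in(1,\infty)$; the borderline exponents are genuinely excluded, so some non-elementary input is needed, and I would supply it through the Marcinkiewicz interpolation theorem applied to a weak-type estimate for $u\mapsto K\ast u$.

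For the weak-type bound, fix $\lambda>0$ and split $K=K_1+K_2$ with $K_1=K\mathbf 1_{B_R}$, $K_2=K\mathbf 1_{\R^N\setminus B_R}$ for a radius $R$ to be chosen. Because $\mu<N$ one has $K_1\in L^1$ with $|K_1|_1=c_NR^{N-\mu}$, so Young's inequality gives $|K_1\ast u|_r\le c_NR^{N-\mu}|u|_r$. Because $\mu r'>N$ one has $K_2\in L^{r'}$ with $|K_2|_{r'}=cR^{\frac N{r'}-\mu}$ and $\frac N{r'}-\mu<0$, so Young's inequality gives $\big\|K_2\ast u\big\|_{L^\infty}\le cR^{\frac N{r'}-\mu}|u|_r$, a quantity tending to $0$ as $R\to\infty$; choosing $R=R(\lambda)$ so that this equals $\lambda/2$ makes the set $\{|K_2\ast u|>\lambda/2\}$ empty. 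Chebyshev's inequality on the remaining piece then gives
\begin{equation*}
\big|\{|K\ast u|>\lambda\}\big|\le\big|\{|K_1\ast u|>\lambda/2\}\big|\le(2/\lambda)^r|K_1\ast u|_r^r\le C\lambda^{-r}R(\lambda)^{r(N-\mu)}|u|_r^r,
\end{equation*}
and substituting the explicit $R(\lambda)$ and simplifying the exponents (here the critical balance $\frac1r+\frac1t+\frac\mu N=2$ is exactly what makes the powers of $\lambda$ and $|u|_r$ agree) yields $\big|\{|K\ast u|>\lambda\}\big|\le C\big(|u|_r/\lambda\big)^q$ for all $\lambda>0$, i.e. $u\mapsto K\ast u$ is of weak type $(r,q)$.

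Since this computation is valid for every exponent in the open interval $\big(1,\tfrac N{N-\mu}\big)$, I would fix $1<r_0<r<r_1<\tfrac N{N-\mu}$ with corresponding target exponents $q_0,q_1$ (determined by $\frac1{q_i}=\frac1{r_i}-\frac{N-\mu}{N}$, so that $q_0<q<q_1$), note that $u\mapsto K\ast u$ is sublinear and of weak types $(r_0,q_0)$ and $(r_1,q_1)$, and invoke the Marcinkiewicz interpolation theorem to conclude that it is of strong type $(r,q)$, i.e. $\big|K\ast u\big|_q\le C(r,\mu,N)|u|_r$. Combined with the H\"older step this proves the inequality with a finite constant $C(r,t,\mu,N)$; the existence of a smallest (``sharp'') such constant is then automatic, being the supremum of the ratio over all admissible $u,v$, and I would merely remark that its explicit value and the characterization of the extremals require a separate argument based on the Riesz rearrangement inequality, which is not needed elsewhere in the paper.

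The part I expect to be most delicate is the bookkeeping in the weak-type step: one must verify carefully that the admissibility conditions ($r'>N/\mu$ so that $K_2\in L^{r'}$; $\mu<N$ so that $K_1\in L^1$; $q\in(1,\infty)$ so that the two interpolation endpoints are distinct) are all consequences of the stated hypotheses, and then track the powers of $R$ and $\lambda$ so that the final distributional exponent comes out to be precisely $q$. The rest — Young's inequality, Chebyshev, Marcinkiewicz — is routine.
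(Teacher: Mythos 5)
This proposition is quoted in the paper without proof, as a standard result from Lieb and Loss \cite{ll01}; there is no proof in the paper to compare against. Your argument is a correct and essentially complete proof of the (non-sharp) Hardy--Littlewood--Sobolev inequality in the form needed: by H\"older's inequality the bilinear statement reduces to the operator bound $\lvert\, |x|^{-\mu}\ast u\,\rvert_q\le C\lvert u\rvert_r$ with $\frac1q=\frac1r-\frac{N-\mu}{N}$, and the weak-type $(r,q)$ estimate you derive by the $K=K_1+K_2$ splitting followed by Marcinkiewicz interpolation between two nearby weak-type endpoints is the standard textbook route (it appears, for instance, in Stein's \emph{Singular Integrals and Differentiability Properties of Functions}). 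I verified the exponent bookkeeping: with $\alpha:=\mu-\tfrac N{r'}>0$ and $R(\lambda)=(2c\lvert u\rvert_r/\lambda)^{1/\alpha}$, the Chebyshev step yields $\lvert\{|K\ast u|>\lambda\}\rvert\le C(\lvert u\rvert_r/\lambda)^{r(1+(N-\mu)/\alpha)}$, and since $\alpha+N-\mu=N/r$ one indeed has $r(1+(N-\mu)/\alpha)=N/\alpha=q$, as required. The admissibility conditions $r\in(1,\tfrac N{N-\mu})$, $r'>N/\mu$, $q\in(1,\infty)$ and $r\le q$ all follow from the hypotheses, so Marcinkiewicz applies between two weak-type endpoints $r_0<r<r_1$ on the line $\frac1q=\frac1r-\frac{N-\mu}{N}$.

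Worth noting: this route differs from the proof in the cited reference. Lieb and Loss prove the HLS inequality \emph{with the optimal constant} by a layer-cake decomposition together with the Riesz rearrangement inequality and a conformal-invariance/compactness argument; your interpolation proof only yields \emph{some} finite constant, and you rightly observe that the existence of a best constant then follows trivially by taking a supremum. Since the paper never invokes the explicit value of the sharp constant or the classification of extremizers, your weaker but much shorter proof is entirely adequate for the way the proposition is used (namely, in the derivations of \eqref{epuu}, \eqref{yyhls}, and the estimate for $R_{2,\varepsilon}$). One minor caveat on wording: the phrase ``sharp constant'' in the proposition is inherited verbatim from Lieb--Loss, where it carries the stronger meaning of the explicitly computed optimal constant; if your proof is to replace the reference, that adjective should either be dropped or glossed exactly as you do in your final paragraph.
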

    Thus, by the above inequality and (\ref{deflqfh}), (\ref{ctpw}), we get that when $N\leq 6s$, for all $u\in \mathbb{H}$,
    \begin{equation}\label{epuu}
    \begin{split}
    \int_{\Omega}\Phi[u^2]u^2dx
    \leq & \int_{\Omega}W[u^2]u^2dx
    =c_{N,s} \int_{\Omega}\int_{\Omega}\frac{u^2(y)u^2(x)}{|x-y|^{N-2s}}dydx \\
    \leq & c_{N,s} \left(\int_{\Omega}u^\frac{4N}{N+2s}dx\right)^{\frac{N+2s}{N}}
    \leq C \varepsilon^{2s-N}\|u\|^4_\varepsilon.\\
    \end{split}
    \end{equation}
    Substituting $\Phi[u^2]$ in (\ref{P}), then we can see that (\ref{P}) with $V\equiv1$ is equivalent to the following fractional Schr\"{o}dinger problem:
    \begin{eqnarray*}
    \left\{
       \begin{array}{ll}
        \varepsilon^{2s}(-\Delta)^{s}u + u+ \Phi[u^2] u=u^p\ \ \ &\ \mbox{in}\ \Omega,\\[2mm]
        u=0\ \ \ \ &\ \mbox{in}\ \mathbb{R}^N\setminus \Omega.
        \end{array}
    \right.
    \end{eqnarray*}
    The energy functional associated with (\ref{Pb}) is
    \begin{equation}\label{efPb}
    \begin{split}
    J_{\varepsilon}(u)= & \frac{\varepsilon^{2s}C_{N,s}}{2}\int\int_{\mathcal{D}_{\Omega}}\frac{|u(x)-u(y)|^2}{|x-y|^{N+2s}} dxdy
    +\frac{1}{4}\int_{\Omega}\Phi[u] u^2 dx-\int_{\Omega}\left( \frac{u^{p+1}}{p+1}- \frac{u^{2}}{2}\right)dx  \\
    = & \frac{1}{2}\|u\|^2_{\varepsilon}+\frac{1}{4}\int_{\Omega}\Phi[u^2] u^2 dx-\frac{1}{p+1}\int_{\Omega}u^{p+1}dx.
    \end{split}
    \end{equation}
    Define
    \begin{equation}\label{defe}
    E_{\varepsilon,\xi}:=\left\{\omega\in \mathbb{H}: \left\langle \omega,\frac{\partial P_{\varepsilon,\Omega}U_{\varepsilon,\xi}}{\partial \xi_i}\right\rangle_{\varepsilon}=0,\ i=1,2,\ldots, N\right\},
    \end{equation}
    where $\xi=(\xi_1,\ldots,\xi_N)\in\mathbb{R}^N$.
    Let $D_{\varepsilon,R}:=\{\xi:\xi\in\Omega,\ R\varepsilon \leq d(\xi,\partial\Omega)\leq 1/R\}$, where $R>0$ is a large constant.
    In Section \ref{secteps}, we will choose $\xi\in D_{\varepsilon,R}$ for suitable domain $D_{\varepsilon,R}$ such that $(P_{\varepsilon,\Omega}U_{\varepsilon,\xi}+\omega_{\varepsilon,\xi})$ is a solution of (\ref{Pb}). Let
    \begin{equation*}
    K(\xi,\omega)=J_\varepsilon\left(P_{\varepsilon,\Omega}U_{\varepsilon,\xi}+\omega\right),\ \xi\in D_{\varepsilon,R},\ \omega\in E_{\varepsilon,\xi}.
    \end{equation*}
    We expand $K(\xi,\omega)$ near $\omega=0$ as
    \begin{equation*}
    K(\xi,\omega)=K(\xi,0)+L_{\varepsilon,\xi}(\omega)+\frac{1}{2}Q_{\varepsilon,\xi}(\omega,\omega)+R_\varepsilon(\omega),
    \end{equation*}
    where
    \begin{equation}\label{rl}
    \begin{split}
    L_{\varepsilon,\xi}(\omega)=\langle P_{\varepsilon,\Omega}U_{\varepsilon,\xi}, \omega \rangle_\varepsilon
    + \int_{\Omega}\Phi[P^2_{\varepsilon,\Omega}U_{\varepsilon,\xi}]P_{\varepsilon,\Omega}U_{\varepsilon,\xi} \omega  dx
    - \int_{\Omega}(P_{\varepsilon,\Omega}U_{\varepsilon,\xi})^p\omega dx,
    \end{split}
    \end{equation}

    \begin{equation}\label{rq}
    \begin{split}
    Q_{\varepsilon,\xi}(\omega,\eta)
    = & \langle \omega,\eta \rangle_\varepsilon
    - p\int_{\Omega}(P_{\varepsilon,\Omega}U_{\varepsilon,\xi})^{p-1}\omega\eta dx
    +\int_{\Omega}\Phi[P^2_{\varepsilon,\Omega}U_{\varepsilon,\xi}]\omega\eta  dx  \\
    & +2\int_{\Omega}\Phi[\omega P_{\varepsilon,\Omega}U_{\varepsilon,\xi}]
    P_{\varepsilon,\Omega}U_{\varepsilon,\xi}(x)\eta(x)  dx,\ \ \forall\omega,\ \eta\in\mathbb{H},
    \end{split}
    \end{equation}
    and
    \begin{equation}\label{rr}
    \begin{split}
    R_{\varepsilon}(\omega)
    = & \int_{\Omega}\Phi[\omega^2]P_{\varepsilon,\Omega}U_{\varepsilon,\xi}\omega dx
    +\frac{1}{4}\int_{\Omega}\Phi[\omega^2]\omega^2 dx
    -\frac{1}{p+1}\int_{\Omega}(P_{\varepsilon,\Omega}U_{\varepsilon,\xi}+\omega)^{p+1}dx  \\
    & + \frac{1}{p+1}\int_{\Omega}(P_{\varepsilon,\Omega}U_{\varepsilon,\xi})^{p+1}dx
    +\int_{\Omega}(P_{\varepsilon,\Omega}U_{\varepsilon,\xi})^p\omega dx
    +\frac{p}{2}\int_{\Omega}(P_{\varepsilon,\Omega}U_{\varepsilon,\xi})^{p-1}\omega^2 dx.
    \end{split}
    \end{equation}
    We will prove in Lemma \ref{lemel} that $L_{\varepsilon,\xi}:E_{\varepsilon,\xi}\rightarrow \mathbb{R}$ is a bounded linear operator. In particular, by the Riesz representation theorem there is an $l_{\varepsilon,\xi}\in E_{\varepsilon,\xi}$ such that $\langle l_{\varepsilon,\xi}, \omega\rangle_\varepsilon     =L_{\varepsilon,\xi}(\omega)$. In Lemma \ref{lemqb}, we will prove
    \begin{equation*}
    |Q_{\varepsilon,\xi}(\omega,\omega)|\leq C\|\omega\|_\varepsilon^2,
    \end{equation*}
    where $C$ is a constant independent of $\varepsilon$. In particular, there is a bounded linear operator $\mathcal{A}_{\varepsilon,\xi}: E_{\varepsilon,\xi}\rightarrow E_{\varepsilon,\xi}$ such that $\langle \mathcal{A}_{\varepsilon,\xi}\omega, \eta\rangle_\varepsilon     =Q_{\varepsilon,\xi}(\omega,\eta)$. Thus, finding a critical point for $K(\xi,\omega)$ in $E_{\varepsilon,\xi}$ is equivalent to solving the following problem in $E_{\varepsilon,\xi}$:
    \begin{equation}\label{re}
    l_{\varepsilon,\xi}+\mathcal{A}_{\varepsilon,\xi}\omega+R'_\varepsilon(\omega)=0.
    \end{equation}
    We will prove in Lemma \ref{lemeqw} that the operator $\mathcal{A}_{\varepsilon,\xi}$ is invertible in $E_{\varepsilon,\xi}$. So we see that $l_{\varepsilon,x}+\mathcal{A}_{\varepsilon,x}\omega=0$ has a unique solution in $E_{\varepsilon,\xi}$ for each fixed $\xi\in D_{\varepsilon,R}$. In Lemma \ref{profu}, we will prove that if $\omega$ belongs to a suitable set, the term $R'_\varepsilon(\omega)$ is a small perturbation term in (\ref{re}). Thus, we can use the contraction mapping theorem to prove that (\ref{re}) has a unique solution for each fixed $\xi\in D_{\varepsilon,R}$.

\section{{\bfseries Some preliminary estimates}}\label{sectpe}

    In this section, we will give some preliminary estimates which are crucial for our result.

    Firstly, let us analysis the properties of $U_{\varepsilon,\xi}$ and $P_{\varepsilon,\Omega}U_{\varepsilon,\xi}$. Fix $\xi\in D_{\varepsilon,R}$, $\xi=(\xi_1,\xi_2,\ldots,\xi_N)$. From (\ref{bu}), we have
    \begin{equation}\label{bub}
        U_{\varepsilon,\xi}(x) \leq C\left(\frac{\varepsilon}{d(\xi,\Omega)}\right)^{N+2s},\ \ \forall x\in \mathbb{R}^N\backslash\Omega,
    \end{equation}
    for some $C>0$. Let $\eta_{\varepsilon,\xi}=U_{\varepsilon,\xi}-P_{\varepsilon,\Omega}U_{\varepsilon,\xi},$
    then from (\ref{Pp}) we have
    \begin{eqnarray}\label{defetae}
    \left\{ \arraycolsep=1.5pt
       \begin{array}{ll}
        \varepsilon^{2s}(-\Delta)^{s}\eta_{\varepsilon,\xi}+  \eta_{\varepsilon,\xi}=0\ \ \ &\ \mbox{in}\ \Omega,\\[2mm]
        \eta_{\varepsilon,\xi}=U_{\varepsilon,\xi}\ \ \ \ &\ \mbox{in}\ \mathbb{R}^N\backslash \Omega.
        \end{array}
    \right.
    \end{eqnarray}

    \begin{lemma}\label{lemupu}
    There exists a positive constant $C$ such that for any $x\in\mathbb{R}^N$, it holds that
    \begin{equation}\label{upu}
    0\leq \eta_{\varepsilon,\xi}\leq C \left(\frac{\varepsilon}{d(\xi,\Omega)}\right)^{N+2s}.
    \end{equation}
    \end{lemma}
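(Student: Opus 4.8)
The plan is to obtain both inequalities from the weak maximum principle for the operator $\varepsilon^{2s}(-\Delta)^{s}+1$ on $\Omega$, the only external input being the pointwise bound (\ref{bub}) for $U_{\varepsilon,\xi}$ on $\mathbb{R}^N\setminus\Omega$. Throughout, recall from (\ref{defetae}) that $\eta_{\varepsilon,\xi}$ is a weak solution of $\varepsilon^{2s}(-\Delta)^{s}\eta_{\varepsilon,\xi}+\eta_{\varepsilon,\xi}=0$ in $\Omega$ with $\eta_{\varepsilon,\xi}=U_{\varepsilon,\xi}$ in $\mathbb{R}^N\setminus\Omega$, and that the basic tool is the elementary pointwise inequality $\big(v^{+}(x)-v^{+}(y)\big)\big(v^{-}(x)-v^{-}(y)\big)\le 0$ valid for any $v=v^{+}-v^{-}$.

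For the lower bound, note that $U_{\varepsilon,\xi}>0$ by Theorem~A, so $\eta_{\varepsilon,\xi}\ge 0$ outside $\Omega$ and hence its negative part $\eta_{\varepsilon,\xi}^{-}$ vanishes there, which makes $\eta_{\varepsilon,\xi}^{-}\in\mathbb{H}$ an admissible test function. Testing the equation for $\eta_{\varepsilon,\xi}$ with $\eta_{\varepsilon,\xi}^{-}$, the pointwise inequality above forces the quadratic part to satisfy $\varepsilon^{2s}\int_{\mathbb{R}^N}(-\Delta)^{\frac{s}{2}}\eta_{\varepsilon,\xi}\,(-\Delta)^{\frac{s}{2}}\eta_{\varepsilon,\xi}^{-}\le -\varepsilon^{2s}\int_{\mathbb{R}^N}|(-\Delta)^{\frac{s}{2}}\eta_{\varepsilon,\xi}^{-}|^{2}$, while $\int_{\Omega}\eta_{\varepsilon,\xi}\eta_{\varepsilon,\xi}^{-}=-\int_{\Omega}(\eta_{\varepsilon,\xi}^{-})^{2}$; together these give
\[
\varepsilon^{2s}\int_{\mathbb{R}^N}\big|(-\Delta)^{\frac{s}{2}}\eta_{\varepsilon,\xi}^{-}\big|^{2}+\int_{\Omega}(\eta_{\varepsilon,\xi}^{-})^{2}\le 0,
\]
so $\eta_{\varepsilon,\xi}^{-}\equiv 0$, i.e. $\eta_{\varepsilon,\xi}\ge 0$ on $\mathbb{R}^N$.

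For the upper bound, let $M:=C\big(\varepsilon/d(\xi,\Omega)\big)^{N+2s}$ be the constant from (\ref{bub}), so that $\eta_{\varepsilon,\xi}=U_{\varepsilon,\xi}\le M$ on $\mathbb{R}^N\setminus\Omega$; then $(\eta_{\varepsilon,\xi}-M)^{+}$ is supported in $\Omega$ and lies in $\mathbb{H}$. Testing the equation for $\eta_{\varepsilon,\xi}$ with $(\eta_{\varepsilon,\xi}-M)^{+}$: the constant $M$ does not enter the bilinear form (its Gagliardo increments vanish), and the pointwise inequality applied to $v=\eta_{\varepsilon,\xi}-M$ makes the quadratic term nonnegative, whereas $\int_{\Omega}\eta_{\varepsilon,\xi}(\eta_{\varepsilon,\xi}-M)^{+}=\int_{\Omega}\big((\eta_{\varepsilon,\xi}-M)^{+}\big)^{2}+M\int_{\Omega}(\eta_{\varepsilon,\xi}-M)^{+}\ge 0$. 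Since the sum of these two nonnegative quantities vanishes, $(\eta_{\varepsilon,\xi}-M)^{+}\equiv 0$, i.e. $\eta_{\varepsilon,\xi}\le M$. (Equivalently: $w:=M-\eta_{\varepsilon,\xi}$ satisfies $\varepsilon^{2s}(-\Delta)^{s}w+w=M\ge 0$ in $\Omega$ and $w\ge 0$ on $\mathbb{R}^N\setminus\Omega$, hence $w\ge 0$.) Combined with the lower bound this is precisely (\ref{upu}).

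The main point requiring care is the rigorous form of the comparison principle: a priori $\eta_{\varepsilon,\xi}\in\mathbb{H}$ is merely a weak solution, so one must argue at the level of the variational identity with truncated test functions as above, rather than evaluating $(-\Delta)^{s}$ at an interior extremum. This is by now standard for $\varepsilon^{2s}(-\Delta)^{s}+1$ and, importantly, needs no regularity of $\partial\Omega$ beyond the Lipschitz assumption; the remaining steps are routine.
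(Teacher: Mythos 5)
Your proposal is correct and follows essentially the same route as the paper: the paper simply notes that $\eta_{\varepsilon,\xi}=U_{\varepsilon,\xi}\leq C(\varepsilon/d)^{N+2s}$ on $\mathbb{R}^N\setminus\Omega$ and invokes the maximum principle for $\varepsilon^{2s}(-\Delta)^s+1$, which is exactly the comparison you carry out. The only difference is that you supply the variational proof of that maximum principle (testing with $\eta_{\varepsilon,\xi}^-$ and $(\eta_{\varepsilon,\xi}-M)^+$), which the paper leaves implicit; both steps are executed correctly.
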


    \begin{proof}
    By using the estimation (\ref{bub}), we have that $\eta_{\varepsilon,\xi}\leq U_{\varepsilon,\xi}\leq C\left(\frac{\varepsilon}{d(\xi,\Omega)}\right)^{N+2s}$ in $\mathbb{R}^N\backslash\Omega$. Hence, by the maximum principle, we can directly get (\ref{upu}).
    \end{proof}

    \begin{lemma}\label{lemeguc}
    There is $C>0$ such that for every $x\in\mathbb{R}^N$,
    \begin{equation}\label{estgru1}
    \Phi[U^2_{\varepsilon,\xi}](x)\leq \frac{C \varepsilon^{2s}}{1+\left|\frac{x-\xi}{\varepsilon}\right|^{N-2s}},
    \end{equation}
    and
    \begin{equation}\label{estgru2}
    \Phi[U_{\varepsilon,\xi}](x)\leq \frac{C \varepsilon^{2s}}{1+\left|\frac{x-\xi}{\varepsilon}\right|^{N-2s}}.
    \end{equation}
    \end{lemma}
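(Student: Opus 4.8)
The plan is to reduce both inequalities to a single convolution estimate for the $s$-Riesz potential and then carry out a standard three-region decomposition. By the comparison bound (\ref{ctpw}) we have $\Phi[U_{\varepsilon,\xi}^2]\le W[U_{\varepsilon,\xi}^2]$ and $\Phi[U_{\varepsilon,\xi}]\le W[U_{\varepsilon,\xi}]$, so it suffices to estimate the $s$-Riesz potentials on the right. Using the representation formula (\ref{defprf2}) together with the substitution $y=\varepsilon z+\xi$, under which $U_{\varepsilon,\xi}(y)=U(z)$, $dy=\varepsilon^N\,dz$ and $|x-y|=\varepsilon\,\big|\tfrac{x-\xi}{\varepsilon}-z\big|$, one gets
\[
W[U_{\varepsilon,\xi}^2](x)=c_{N,s}\,\varepsilon^{2s}\int_{\R^N}\frac{U^2(z)}{|\bar x-z|^{N-2s}}\,dz,\qquad
W[U_{\varepsilon,\xi}](x)=c_{N,s}\,\varepsilon^{2s}\int_{\R^N}\frac{U(z)}{|\bar x-z|^{N-2s}}\,dz,
\]
where $\bar x:=\frac{x-\xi}{\varepsilon}$. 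Hence (\ref{estgru1})--(\ref{estgru2}) follow once one proves that for any measurable $f$ with $0\le f(z)\le C(1+|z|)^{-\tau}$ and $\tau>N$,
\[
\int_{\R^N}\frac{f(z)}{|\bar x-z|^{N-2s}}\,dz\le \frac{C}{1+|\bar x|^{N-2s}},
\]
and then to apply this with $f=U^2$ (for which $\tau=2(N+2s)>N$ by (\ref{bu})) and with $f=U$ (for which $\tau=N+2s>N$ by (\ref{bu})).

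To prove this convolution inequality I would distinguish two cases. For $|\bar x|\le 1$ the left-hand side is bounded by a fixed constant: splitting the integral over $\{|\bar x-z|<1\}$ and $\{|\bar x-z|\ge1\}$ and using that $|\cdot|^{-(N-2s)}$ is locally integrable (since $N-2s<N$), together with $f\in L^1\cap L^\infty(\R^N)$ (which holds because $\tau>N$), gives a uniform bound, and $1+|\bar x|^{N-2s}\le 2$ there. For $|\bar x|\ge 1$ I would split $\R^N=A_1\cup A_2\cup A_3$ with $A_1=\{|z|<|\bar x|/2\}$, $A_2=\{|\bar x-z|<|\bar x|/2\}$ and $A_3=\R^N\setminus(A_1\cup A_2)$. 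On $A_1$ one has $|\bar x-z|\ge|\bar x|/2$, so the contribution is $\le C|\bar x|^{-(N-2s)}\|f\|_{1}$. On $A_2$ one has $|z|\ge|\bar x|/2$, hence $f(z)\le C|\bar x|^{-\tau}$, while $\int_{A_2}|\bar x-z|^{-(N-2s)}\,dz\le C|\bar x|^{2s}$; the contribution is thus $\le C|\bar x|^{2s-\tau}\le C|\bar x|^{-(N-2s)}$, using $\tau>N$. On $A_3$ both $|\bar x-z|\ge|\bar x|/2$ and $|z|\ge|\bar x|/2$, so again the contribution is $\le C|\bar x|^{-(N-2s)}\|f\|_{1}$. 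Adding the three pieces yields the claimed bound for $|\bar x|\ge1$, and combining with the case $|\bar x|\le1$ gives the inequality for all $\bar x$.

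Finally, substituting back and recalling $\bar x=\frac{x-\xi}{\varepsilon}$ reproduces exactly the estimates (\ref{estgru1}) and (\ref{estgru2}), the $\varepsilon^{2s}$ prefactor being precisely the one generated by the change of variables. The only delicate point is the convolution estimate, and within it the bookkeeping of exponents in the region $A_2$; what makes it work is that the decay rate $\tau$ of $f$ exceeds $N$, which is guaranteed for both $f=U^2$ and $f=U$ by the pointwise decay in Theorem~A, and no regularity of $U$ beyond this algebraic decay (and $L^1$-finiteness) is needed.
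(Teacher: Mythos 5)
Your proposal is correct and follows essentially the same route as the paper: reduce to the whole-space $s$-Riesz potential via the comparison principle (\ref{ctpw}), rescale to pull out the $\varepsilon^{2s}$ factor, and prove the decay of the resulting convolution by splitting into the region near the singularity (where the algebraic decay of $U$ with rate $\tau>N$ is used) and the region away from it (where $|\bar x-z|\gtrsim|\bar x|$ and $\|f\|_{L^1}<\infty$ are used); your three-region split is just a minor refinement of the paper's two-region one. A small plus of your version is that the general convolution lemma with $\tau>N$ covers the case $f=U$ explicitly, which the paper dismisses as ``similar.''
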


    \begin{proof}
    We only prove (\ref{estgru1}), and the proof of (\ref{estgru2}) is similar.
    Noticing that $\Phi[U^2_{\varepsilon,\xi}](\varepsilon x+\xi)=\varepsilon^{2s}v_\varepsilon(x)$ and $v_\varepsilon$ satisfies 
    \begin{eqnarray*}
    \left\{
       \begin{array}{ll}
        (-\Delta)^{s}v_\varepsilon=U^2\ \ \ &\ x\in\Omega_{\varepsilon,\xi},\\[2mm]
        v_\varepsilon=0\ \ \ \ &\ x\in\mathbb{R}^N\backslash \Omega_{\varepsilon,\xi}.
        \end{array}
    \right.
    \end{eqnarray*}
    Let $W$ be the solution of the following equation
    \begin{equation}\label{gps}
    (-\Delta)^{s}W=U^2,\ \ W\in H^s(\mathbb{R}^N).
    \end{equation}
    It is well known that
    \begin{equation}\label{gpsj}
    W(x)=c_{N,s}\int_{\mathbb{R}^N}\frac{U^2(y)}{|y-x|^{N-2s}}dy, \quad x\in \mathbb{R}^N,
    \end{equation}
    where $c_{N,s}$ is a constant depending only on $N$ and $s$. From \cite[Lemma 5.1]{dddv},  we have
    \begin{equation}\label{gpsdx}
    \frac{C_1}{1+|x|^{N-2s}}\leq W(x)\leq \frac{C_2}{1+|x|^{N-2s}},
    \end{equation}
    for some $C_2\geq C_1>0.$ Thus
    \begin{equation*}
    \begin{split}
    W(0)
    = & c_{N,s}\int_{\mathbb{R}^N}\frac{U^2(y)}{|y|^{N-2s}}dy \\
    \leq & \int_{\mathbb{R}^N}\frac{C}{|y|^{N-2s}(1+|y|^{N+2s})^2}dy \\
    = & \int_{B_{1}(0)}\frac{C}{|y|^{N-2s}(1+|y|^{N+2s})^2}dy+ \int_{\mathbb{R}^N\backslash B_{1}(0)}\frac{C}{|y|^{N-2s}(1+|y|^{N+2s})^2}dy \\
    \leq & \int_{B_{1}(0)}\frac{C}{|y|^{N-2s}}dy+ \int_{\mathbb{R}^N\backslash B_{1}(0)}\frac{C}{(1+|y|^{N+2s})^2}dy
    \leq  C',
    \end{split}
    \end{equation*}
    and also we can get $W(0)\geq C$. Then fix $x\in\mathbb{R}^N$ for $|x|\geq 2$ large and $t:=|x|/2$, and observe that if $y\in B_{t}(x)$ then $|y|\geq |x|-|x-y|\geq |x|-t=\frac{|x|}{2}.$ As a consequence,
    \begin{equation*}
    \begin{split}
    \int_{B_{t}(x)}\frac{U^2(y)}{|y-x|^{N-2s}}dy
    \leq & \int_{B_{t}(x)}\frac{C}{|y-x|^{N-2s}(1+|y|^{N+2s})^2}dy \\
    \leq & \int_{B_{t}(x)}\frac{C}{|y-x|^{N-2s}(1+(|x|/2)^{N+2s})^2}dy \\
    \leq & \frac{C}{1+|x|^{2(N+2s)}}\int_{B_{t}(0)}\frac{1}{|z|^{N-2s}}dz \\
    \leq & \frac{C'|t|^{2s}}{1+|x|^{2(N+2s)}}
    \leq  \frac{C''}{1+|x|^{2N+2s}}.
    \end{split}
    \end{equation*}
    Moreover, if $y\in \mathbb{R}^N\backslash B_{t}(x)$ then $|y-x|\geq t=\frac{|x|}{2},$ thus
    \begin{equation*}
    \begin{split}
    \int_{\mathbb{R}^N\backslash B_{t}(x)}\frac{U^2(y)}{|y-x|^{N-2s}}dy
    \leq & \int_{\mathbb{R}^N\backslash B_{t}(x)}\frac{C}{(|x|/2)^{N-2s}(1+|y|^{N+2s})^2}dy \\
    \leq & \frac{C'}{|x|^{N-2s}}\int_{\mathbb{R}^N}\frac{1}{(1+|y|^{N+2s})^2}dy \\
    \leq & \frac{C''}{1+|x|^{N-2s}}.
    \end{split}
    \end{equation*}
    Furthermore, if $y\in B_{t}(0)$ then $\frac{3|x|}{2}=3t\geq |y-x|\geq t=\frac{|x|}{2},$ thus
    \begin{equation*}
    \begin{split}
    \int_{B_{t}(0)}\frac{U^2(y)}{|y-x|^{N-2s}}dy
    \geq & \int_{B_{t}(0)}\frac{C}{(3|x|/2)^{N-2s}(1+|y|^{N+2s})^2}dy \\
    \geq & \frac{C'}{|x|^{N-2s}}\int_{B_{1}(0)}\frac{1}{(1+|y|^{N+2s})^2}dy \\
    \geq & \frac{C''}{1+|x|^{N-2s}}.
    \end{split}
    \end{equation*}
    Thus, (\ref{gpsdx}) holds.
    By the comparison theorem, we have $0<v_\varepsilon\leq W.$ Thus we have the following estimation of $v_\varepsilon$:
    \begin{equation*}
    v_\varepsilon(x)\leq \frac{C}{1+|x|^{N-2s}},\ \ \forall\ x\in \mathbb{R}^N,
    \end{equation*}
    i.e., there exists $C>0$ such that (\ref{estgru1}) holds.
    \end{proof}

    Finally, at the end of this section, let us make a more accurate estimation of $\Phi[U^2_{\varepsilon,\xi}]$.

    \begin{lemma}\label{lemegu}
    We have
    \begin{equation*}
    \begin{split}
    \Phi[U^2_{\varepsilon,\xi}](x)= & \varepsilon^{2s}W\left(\frac{x-\xi}{\varepsilon}\right)
    -\varepsilon^{N}B H(\xi,x)
    +O\left(\mu_{\varepsilon,\xi}
    \right),
    \end{split}
    \end{equation*}
    where $W$ is given in (\ref{gpsj}), $B=\int_{\mathbb{R}^N} U^2$, $H(\xi,x)$ is the regular part of the Green function of the operator $(-\Delta)^s$ with the Dirichlet boundary condition, and $\mu_{\varepsilon,\xi}$ will be defined in (\ref{defmu}).
    \end{lemma}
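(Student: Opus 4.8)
The plan is to represent $\Phi[U^2_{\varepsilon,\xi}]$ by means of the Green function $G(x,y)$ of the restricted fractional Laplacian on $\Omega$ (with the exterior Dirichlet condition), namely
\begin{equation*}
\Phi[U^2_{\varepsilon,\xi}](x)=\int_\Omega G(x,y)\,U^2_{\varepsilon,\xi}(y)\,dy,
\end{equation*}
and then to split off the regular part through $G(x,y)=\frac{c_{N,s}}{|x-y|^{N-2s}}-H(x,y)$; this decomposition and the corresponding Green's identity are precisely what is furnished by \cite{drv}. Substituting, the claim reduces to showing that the singular piece $c_{N,s}\int_\Omega|x-y|^{-(N-2s)}U^2_{\varepsilon,\xi}(y)\,dy$ equals $\varepsilon^{2s}W\big(\tfrac{x-\xi}{\varepsilon}\big)$ up to an $O(\mu_{\varepsilon,\xi})$ error, and that the regular piece $\int_\Omega H(x,y)\,U^2_{\varepsilon,\xi}(y)\,dy$ equals $\varepsilon^N B\,H(\xi,x)$ up to an $O(\mu_{\varepsilon,\xi})$ error.

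For the singular piece I would enlarge the domain of integration to all of $\mathbb{R}^N$. The change of variables $y=\varepsilon w+\xi$ in $c_{N,s}\int_{\mathbb{R}^N}U^2((y-\xi)/\varepsilon)|x-y|^{-(N-2s)}\,dy$ gives exactly $\varepsilon^{2s}W\big(\tfrac{x-\xi}{\varepsilon}\big)$, with $W$ the solution of (\ref{gps}) given by (\ref{gpsj}); hence the singular piece equals $\varepsilon^{2s}W\big(\tfrac{x-\xi}{\varepsilon}\big)-c_{N,s}\int_{\mathbb{R}^N\setminus\Omega}U^2_{\varepsilon,\xi}(y)|x-y|^{-(N-2s)}\,dy$. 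On $\mathbb{R}^N\setminus\Omega$ one has $|y-\xi|\ge d(\xi,\partial\Omega)$, so (\ref{bu}) and (\ref{bub}) yield $U^2_{\varepsilon,\xi}(y)\le C\varepsilon^{2N+4s}|y-\xi|^{-(2N+4s)}$ there, and estimating the resulting integral (distinguishing according to whether $x$ is close to or far from $\xi$) shows that it is of the order contained in $\mu_{\varepsilon,\xi}$.

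For the regular piece I would freeze the slowly varying factor $H(x,\cdot)$ at the point $\xi$: write $H(x,y)=H(x,\xi)+\big(H(x,y)-H(x,\xi)\big)$. The frozen term equals $H(x,\xi)\int_\Omega U^2_{\varepsilon,\xi}=H(\xi,x)\,\varepsilon^N\int_{\Omega_{\varepsilon,\xi}}U^2=H(\xi,x)\,\varepsilon^N\big(B-\int_{\mathbb{R}^N\setminus\Omega_{\varepsilon,\xi}}U^2\big)$, where I used the symmetry $H(x,\xi)=H(\xi,x)$; since $\Omega_{\varepsilon,\xi}\supset B_{d(\xi,\partial\Omega)/\varepsilon}(0)$, the algebraic decay of $U$ makes $\int_{\mathbb{R}^N\setminus\Omega_{\varepsilon,\xi}}U^2$ of order $\big(\varepsilon/d(\xi,\partial\Omega)\big)^{N+4s}$, which combined with $0\le H(\xi,x)\le c_{N,s}|x-\xi|^{-(N-2s)}$ produces an admissible remainder. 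The fluctuation term $\int_\Omega\big(H(x,y)-H(x,\xi)\big)\,U^2_{\varepsilon,\xi}(y)\,dy$ I would split over $B_\rho(\xi)$ and $\mathbb{R}^N\setminus B_\rho(\xi)$ with $\rho$ a fixed fraction of $d(\xi,\partial\Omega)$: inside $B_\rho(\xi)$, where $H(x,\cdot)$ is smooth, a mean-value estimate bounds $|H(x,y)-H(x,\xi)|$ by $|y-\xi|\sup_{B_\rho(\xi)}|\nabla_yH(x,\cdot)|$ and one integrates $|y-\xi|\,U^2_{\varepsilon,\xi}(y)$; outside $B_\rho(\xi)$ one again uses the decay of $U_{\varepsilon,\xi}$ together with $0\le H\le c_{N,s}|x-\cdot|^{-(N-2s)}$. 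Both contributions are absorbed into $O(\mu_{\varepsilon,\xi})$.

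The main obstacle is controlling the regular part $H(x,\cdot)$ near $\xi$: for the fractional Laplacian, $H$ is not uniformly regular up to $\partial\Omega$, and $\sup_{B_\rho(\xi)}|\nabla_yH(x,\cdot)|$ degenerates like a negative power of $d(\xi,\partial\Omega)$ as $\xi\to\partial\Omega$. Thus the freezing error must be quantified with the correct power of $d(\xi,\partial\Omega)$ so that it --- together with all the tail integrals, which are naturally split at the scale $d(\xi,\partial\Omega)/\varepsilon$ --- can be matched to the definition of $\mu_{\varepsilon,\xi}$ in (\ref{defmu}), uniformly for every $x\in\mathbb{R}^N$. Making this dependence sharp is the delicate point of the proof, and it is exactly here that the Green's formula of \cite{drv} and the algebraic decay (\ref{bu}) of the ground state $U$ are essential.
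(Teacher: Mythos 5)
Your overall strategy coincides with the paper's: decompose the Green function as singular part minus regular part $H$, identify the singular contribution with $\varepsilon^{2s}W(\tfrac{x-\xi}{\varepsilon})$ by rescaling plus a tail estimate, and freeze $H(x,\cdot)$ at $\xi$ for the regular contribution. The singular piece and the zeroth-order (frozen) term are handled essentially as in the paper.

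There is, however, a genuine gap in your treatment of the fluctuation term $\int_\Omega\bigl(H(x,y)-H(x,\xi)\bigr)U^2_{\varepsilon,\xi}(y)\,dy$. Your mean-value bound $|H(x,y)-H(x,\xi)|\leq|y-\xi|\sup|\nabla_yH(x,\cdot)|$ followed by integration of $|y-\xi|\,U^2_{\varepsilon,\xi}(y)$ gives, after rescaling, at best
\begin{equation*}
C\,\varepsilon^{N+1}\,d(\xi,\partial\Omega)^{-(N-2s+1)}\int_{\mathbb{R}^N}|z|\,U^2(z)\,dz
=O\!\left(\frac{\varepsilon^{N+1}}{d(\xi,\partial\Omega)^{N-2s+1}}\right),
\end{equation*}
since the integral $\int|z|U^2(z)\,dz$ is merely convergent and gains nothing from restricting to $B_{\rho/\varepsilon}(0)$. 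Comparing with $\mu_{\varepsilon,\xi}=\varepsilon^{N+2}|\log\varepsilon|\,d^{-(N-2s+2)}$, the ratio is $d/(\varepsilon|\log\varepsilon|)\to\infty$ on the range $d\geq\varepsilon^{1-\varpi}$, so this bound is strictly larger than the claimed remainder and the lemma does not follow. The point you are missing is a cancellation: one must Taylor-expand $H(x,\varepsilon z+\xi)$ to \emph{second} order and observe that, since $U$ is radial, the first-order moment $\int_{B_{d/\varepsilon}(0)}z_k\,U^2(z)\,dz$ vanishes, so the linear term reduces to an integral over $\Omega_{\varepsilon,\xi}\setminus B_{d(\xi,\partial\Omega)/\varepsilon}(0)$ where $U^2$ has already decayed, yielding $O(\varepsilon^{2N+4s}d^{-(2N+2s)})=O(\mu_{\varepsilon,\xi})$; the genuine second-order term, controlled by $|D^2_yH|\lesssim d^{-(N-2s+2)}$ and $\int_{B_{r/\varepsilon}}|z|^2U^2\lesssim|\log\varepsilon|$ (or $O(\varepsilon^{N+4s-2})$ when $N+4s<2$), then produces exactly $\mu_{\varepsilon,\xi}$. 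Without exploiting this odd-moment cancellation your error term is too large by a factor of order $d/(\varepsilon|\log\varepsilon|)$.
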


    \begin{proof}
    Let $S(x,\xi)$ be the solution of
    \begin{eqnarray}\label{lemeguPs}
    \left\{
       \begin{array}{ll}
        (-\Delta)^{s}S=\delta_\xi\ \ \ &\ \mbox{in}\ \mathbb{R}^N,\\[2mm]
        S(x,\xi)\rightarrow 0\ \ \ \ &\ \mbox{as}\ |x|\rightarrow \infty,
        \end{array}
    \right.
    \end{eqnarray}
    where $\delta_\xi$ is the Dirac measure. By a direct calculation, we know that $S(x,\xi)$ behaves like $\frac{1}{|x-\xi|^{N-2s}}$, i.e.
    \begin{equation*}
    S(x,\xi)\simeq \frac{C}{|x-\xi|^{N-2s}}.
    \end{equation*}
    Then
    \begin{equation*}
    \Phi[U^2_{\varepsilon,\xi}](x)=\int_{\Omega}\left(S(y,x)-H(y,x)\right)U^2_{\varepsilon,\xi}(y) dy.
    \end{equation*}
    Since $H(x,\xi)$ is the regular part of the Green function of the operator $(-\Delta)^s$ with the Dirichlet boundary condition, i.e.
    \begin{eqnarray}\label{lemeguPh}
    \left\{
       \begin{array}{ll}
        (-\Delta)^{s}_x H(x,\xi)=0\ \ \ &\ \mbox{in}\ \Omega,\\[2mm]
        H(x,\xi)=S(x,\xi)\ \ \ \ &\ \mbox{in}\ \mathbb{R}^N\backslash \Omega,
        \end{array}
    \right.
    \end{eqnarray}
    we have
    \begin{equation}\label{lemeguh}
    |D^l_x H(x,\xi)|\leq \frac{C}{d(\xi,\partial \Omega)^{N-2s+l}},\ \ l=0,1,2.
    \end{equation}
    By using the mean value theorem, we have
    \begin{align}\label{lemeguhu}
    & \int_{\Omega}H(y,x)U^2_{\varepsilon,\xi}(y) dy \nonumber\\
    = & \varepsilon^N \int_{\Omega_{\varepsilon,\xi}}H(\varepsilon z+\xi,x)U^2(z) dz  \nonumber\\
    = & \varepsilon^N \int_{\Omega_{\varepsilon,\xi}}\left[H(\xi,x)+\sum^{N}_{k=1}\varepsilon D_{k}H(\xi,x)\cdot z_k+O\left(\sum^N_{i,j>0}\varepsilon^2 D_{ij}H(\xi,x)\cdot z_i z_j\right)\right]U^2(z) dz  \nonumber\\
    = & \varepsilon^N H(\xi,x)\left(B-\int_{\mathbb{R}^N\backslash\Omega_{\varepsilon,\xi}}U^2(z) dz\right) +\varepsilon^{N+1}\sum^{N}_{k=1}D_{k}H(\xi,x)\int_{\Omega_{\varepsilon,\xi}} z_kU^2(z) dz\nonumber\\
      &
        +O\left(\varepsilon^{N+2}\sum^N_{i,j>0} D_{ij}H(\xi,x)\int_{\Omega_{\varepsilon,\xi}} z_i z_jU^2(z) dz\right)
    \end{align}
    where $\Omega_{\varepsilon,\xi}=\{x: \varepsilon x+\xi\in \Omega\}$.
    Since $B_{\frac{d(\xi,\partial\Omega)}{\varepsilon}}(0)\subset \Omega_{\varepsilon,\xi}$ and by the behavior of $U$, we obtain
    \begin{equation*}
    \begin{split}
    \int_{\mathbb{R}^N\backslash\Omega_{\varepsilon,\xi}}U^2(z) dz\leq & \int_{\mathbb{R}^N\backslash B_{\frac{d(\xi,\partial\Omega)}{\varepsilon}}(0)}U^2(z) dz  \\
    \leq & C\int_{\mathbb{R}^N\backslash B_{\frac{d(\xi,\partial\Omega)}{\varepsilon}}(0)}\frac{1}{(1+|z|^{N+2s})^2} dz \\
    = & O\left(\frac{\varepsilon}{d(\xi,\partial\Omega)}\right)^{N+4s}.
    \end{split}
    \end{equation*}
    Since $U(z)=U(|z|)$ and
    \begin{equation*}
    |\sum^{N}_{k=1}z_k|\leq N|\sum^{N}_{k=1}z^2_k|^{1/2}=N|z|,\quad \forall z=(z_1,\ldots,z_N)\in \mathbb{R}^N,
    \end{equation*}
    we obtain
    \begin{equation*}
    \begin{split}
    \left|\varepsilon^{N+1}\sum^{N}_{k=1}D_{k}H(\xi,x)\int_{\Omega_{\varepsilon,\xi}} z_kU^2(z) dz\right|
    = & \left|\varepsilon^{N+1}\sum^{N}_{k=1}D_{k}H(\xi,x)\int_{\Omega_{\varepsilon,\xi}\backslash B_{\frac{d(\xi,\partial\Omega)}{\varepsilon}}(0)} z_kU^2(z) dz\right|  \\
    \leq & \frac{C\varepsilon^{N+1}}{d(\xi,\partial\Omega)^{N-2s+1}}\int_{\Omega_{\varepsilon,\xi}\backslash B_{\frac{d(\xi,\partial\Omega)}{\varepsilon}}(0)} \frac{|z|}{(1+|z|^{N+2s})^2} dz \\
    = & O\left(\frac{\varepsilon^{2N+4s}}{d(\xi,\partial\Omega)^{2N+2s}}\right).
    \end{split}
    \end{equation*}
    Moreover, since $\Omega\subset \mathbb{R}^N$ is bounded, we can choose $r>0$ which is independent of $\varepsilon$ such that $\Omega_{\varepsilon,\xi}\subset B_{\frac{r}{\varepsilon}}(0)$, then we have
    \begin{equation*}
    \begin{split}
    \varepsilon^{N+2}\sum^N_{i,j>0}D_{ij}H(\xi,x)\int_{\Omega_{\varepsilon,\xi}} z_i z_jU^2(z) dz
    \leq & \frac{C\varepsilon^{N+2}}{d(\xi,\partial\Omega)^{N-2s+2}}\int_{\Omega_{\varepsilon,\xi}} \frac{|z|^2}{(1+|z|^{N+2s})^2} dz \\
    \leq & \frac{C\varepsilon^{N+2}}{d(\xi,\partial\Omega)^{N-2s+2}}\int_{B_{\frac{r}{\varepsilon}}(0)} \frac{|z|^2}{1+|z|^{2N+4s}} dz \\
    = & O\left(\frac{\varepsilon^{N+2}|\log\varepsilon|}{d(\xi,\partial\Omega)^{N-2s+2}}\right),
    \end{split}
    \end{equation*}
    if $N+4s-2\geq0$ i.e. $N\geq2, s\in(0,1)$ or $N=1, s\in[\frac{1}{4},\frac{1}{2})$. Otherwise, if $N+4s-2<0$ i.e. $N=1, s\in(0,\frac{1}{4})$, we have
    \begin{equation*}\label{lemeguij0x}
    \begin{split}
    \varepsilon^{N+2}\sum^N_{i,j>0}D_{ij}H(\xi,x)\int_{\Omega_{\varepsilon,\xi}} z_i z_jU^2(z) dz
    = & O\left(\frac{\varepsilon^{2N+4s}}{d(\xi,\partial\Omega)^{N-2s+2}}\right).
    \end{split}
    \end{equation*}
    Define
    \begin{eqnarray}\label{defmu}
    \mu_{\varepsilon,\xi}=
    \left\{ \arraycolsep=1.5pt
       \begin{array}{ll}
        \frac{\varepsilon^{N+2}|\log\varepsilon|}{d(\xi,\partial\Omega)^{N-2s+2}},\ \ &{\rm if}\ \ N+4s-2\geq0;\\[3mm]
        \frac{\varepsilon^{2N+4s}}{d(\xi,\partial\Omega)^{N-2s+2}},\ \ &{\rm if}\ \ N+4s-2<0.
        \end{array}
    \right.
    \end{eqnarray}
    Therefore,
    \begin{equation}\label{muxx}
    \begin{split}
    \varepsilon^N H(\xi,x)\int_{\mathbb{R}^N\backslash\Omega_{\varepsilon,\xi}}U^2(z) dz
    =O\left(\frac{\varepsilon^{2N+4s}}{d(\xi,\partial\Omega)^{2N+2s}}\right)
    = & O\left(\mu_{\varepsilon,\xi}\right),  \\
    \varepsilon^{N+1}\sum^{N}_{k=1}D_{k}H(\xi,x)\int_{\Omega_{\varepsilon,\xi}} z_kU^2(z) dz= & O\left(\mu_{\varepsilon,\xi}\right),  \\
    \varepsilon^{N+2}\sum^N_{i,j}D_{ij}H(\xi,x)\int_{\Omega_{\varepsilon,\xi}} z_i z_jU^2(z) dz
    = & O\left(\mu_{\varepsilon,\xi}\right).
    \end{split}
    \end{equation}
    On the other hand,
    \begin{equation}\label{lemegusu}
    \begin{split}
    \int_{\Omega}S(y,x)U^2_{\varepsilon,\xi}(y) dy
    = & \varepsilon^N \int_{\Omega_{\varepsilon,\xi}}S(\varepsilon z+\xi,x)U^2(z) dz  \\
    = & \varepsilon^N \int_{\mathbb{R}^N}S(\varepsilon z+\xi,x)U^2(z) dz+ O\left(\frac{\varepsilon^{2N+4s}}{d(\xi,\partial\Omega)^{2N+2s}}\right) \\
    = & \varepsilon^N \int_{\mathbb{R}^N}S(\varepsilon z+\xi,x)U^2(z) dz+ O\left(\mu_{\varepsilon,\xi}\right).
    \end{split}
    \end{equation}
    But $\varepsilon^{N-2s}S(\varepsilon z+\xi,x)$ is a solution of
    \begin{eqnarray*}
    \left\{
       \begin{array}{ll}
        (-\Delta)^{s}w=\delta_{(x-\xi)/\varepsilon}\ \ \ &\ \mbox{in}\ \mathbb{R}^N,\\[2mm]
        w(x)\rightarrow 0\ \ \ \ &\ \mbox{as}\ |x|\rightarrow \infty.
        \end{array}
    \right.
    \end{eqnarray*}
    As a result,
    \begin{equation}\label{lemegusu2}
    \begin{split}
    \varepsilon^{N-2s} \int_{\mathbb{R}^N}S(\varepsilon z+\xi,x)U^2(z) dz=W\left(\frac{x-\xi}{\varepsilon}\right),
    \end{split}
    \end{equation}
    where $W$ is given in (\ref{gpsj}). Thus the result holds.
    \end{proof}

\section{{\bfseries Calculations of the error term}}\label{sectcet}

    Recall that we are looking for a solution of (\ref{Pb}) in the form
        \begin{equation*}
        u_{\varepsilon}=P_{\varepsilon,\Omega}U_{\varepsilon,\xi_{\varepsilon}}+\omega_{\varepsilon,\xi_{\varepsilon}},
        \end{equation*}
    for suitable $\omega_{\varepsilon,\xi_{\varepsilon}}\in E_{\varepsilon,\xi}$. We need to estimate the error term $J_\varepsilon(P_{\varepsilon,\Omega}U_{\varepsilon,\xi})$. Now, let us give more accurate information of $D_{\varepsilon,R}$, that is
    \begin{equation}\label{edvv}
    D_{\varepsilon,R}=\{\xi\in \Omega: \varepsilon R\leq \varepsilon^{1-\varpi}\leq d(\xi,\partial\Omega)\leq 1/R\},
    \end{equation}
    as $\varepsilon$ is small, where
    \begin{equation}\label{defwj}
    \frac{N-2s}{3(N+2s)}<\varpi<\frac{1}{3},
    \end{equation}
    and in the rest of this paper, we will give more information about $\varpi$.

    \begin{proposition}\label{proejbu}
    If $\varepsilon>0$ is small, we have
    \begin{equation}\label{ejbu}
    \begin{split}
    J_\varepsilon(P_{\varepsilon,\Omega}U_{\varepsilon,\xi})= & \varepsilon^{N}A_1+\frac{1}{4}\varepsilon^{N+2s}A_2
    -\frac{1}{4}\varepsilon^{2N}B^2 H(\xi,\xi)+\frac{1}{2}\tau_{\varepsilon,\xi}  \\
    &+\varepsilon^{N}O\left(\left(\frac{\varepsilon}{d(\xi,\partial\Omega)}\right)^{(1+p)(N+2s)-N}
    +\mu_{\varepsilon,\xi}
    +\varepsilon^{2s+\zeta_0(N-2s)}\right).
    \end{split}
    \end{equation}
    where $\zeta_0\in(\frac{1}{3},1)$ closes to $\frac{1}{3}$, $\mu_{\varepsilon,\xi}$ is given in (\ref{defmu}) and
    \begin{equation}\label{ejbud3}
    \begin{split}
    A_1= & \left(\frac{1}{2}-\frac{1}{p+1}\right)\int_{\mathbb{R}^N}U^{p+1},  \ \
    A_2=\int_{\mathbb{R}^N}U^2 W=c_{N,s}\int_{\mathbb{R}^N}\frac{U^2(y)U^2(x)}{|y-x|^{N-2s}}dydx, \\
    B= & \int_{\mathbb{R}^N}U^2,  \ \
    \tau_{\varepsilon,\xi}=\int_{\Omega}(U_{\varepsilon,\xi})^p\left(U_{\varepsilon,\xi}- P_{\varepsilon,\Omega}U_{\varepsilon,\xi}\right).
    \end{split}
    \end{equation}
    \end{proposition}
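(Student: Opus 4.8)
The plan is to evaluate $J_\varepsilon$ at $u=P_{\varepsilon,\Omega}U_{\varepsilon,\xi}$ through the splitting (\ref{efPb}) of $J_\varepsilon$ into the quadratic term $\frac12\|u\|_\varepsilon^2$, the Coulomb term $\frac14\int_\Omega\Phi[u^2]u^2$ and the nonlinearity $-\frac1{p+1}\int_\Omega u^{p+1}$, and to group the first and third. Writing $\eta_{\varepsilon,\xi}=U_{\varepsilon,\xi}-P_{\varepsilon,\Omega}U_{\varepsilon,\xi}$ and testing the defining equation (\ref{Pp}) of $P_{\varepsilon,\Omega}U_{\varepsilon,\xi}$ against $P_{\varepsilon,\Omega}U_{\varepsilon,\xi}$ itself gives $\|P_{\varepsilon,\Omega}U_{\varepsilon,\xi}\|_\varepsilon^2=\int_\Omega U_{\varepsilon,\xi}^p\,P_{\varepsilon,\Omega}U_{\varepsilon,\xi}=\int_\Omega U_{\varepsilon,\xi}^{p+1}-\tau_{\varepsilon,\xi}$. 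For the nonlinearity I would Taylor-expand $(P_{\varepsilon,\Omega}U_{\varepsilon,\xi})^{p+1}=(U_{\varepsilon,\xi}-\eta_{\varepsilon,\xi})^{p+1}$ about $U_{\varepsilon,\xi}$; since $0\le\eta_{\varepsilon,\xi}\le U_{\varepsilon,\xi}$ and (\ref{upu}) holds, this yields $\int_\Omega(P_{\varepsilon,\Omega}U_{\varepsilon,\xi})^{p+1}=\int_\Omega U_{\varepsilon,\xi}^{p+1}-(p+1)\tau_{\varepsilon,\xi}+O(\int_\Omega U_{\varepsilon,\xi}^{p-1}\eta_{\varepsilon,\xi}^2)$. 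Combining the two, the coefficient of $\int_\Omega U_{\varepsilon,\xi}^{p+1}$ becomes $\frac12-\frac1{p+1}$ and the $\tau_{\varepsilon,\xi}$-terms add to $+\frac12\tau_{\varepsilon,\xi}$; finally I would pass from $\int_\Omega U_{\varepsilon,\xi}^{p+1}$ to $\varepsilon^N\int_{\mathbb R^N}U^{p+1}$, the difference being $-\int_{\mathbb R^N\setminus\Omega}U_{\varepsilon,\xi}^{p+1}=\varepsilon^N O((\varepsilon/d(\xi,\partial\Omega))^{(1+p)(N+2s)-N})$, which one reads off from $B_{d(\xi,\partial\Omega)/\varepsilon}(0)\subset\Omega_{\varepsilon,\xi}$ and the decay (\ref{bu}). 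This produces the terms $\varepsilon^N A_1+\frac12\tau_{\varepsilon,\xi}$, together with the remainders $-\int_{\mathbb R^N\setminus\Omega}U_{\varepsilon,\xi}^{p+1}$ and $\int_\Omega U_{\varepsilon,\xi}^{p-1}\eta_{\varepsilon,\xi}^2$ to be absorbed into the error.

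The second ingredient is the Coulomb energy $\frac14\int_\Omega\Phi[(P_{\varepsilon,\Omega}U_{\varepsilon,\xi})^2](P_{\varepsilon,\Omega}U_{\varepsilon,\xi})^2$. I would first replace $P_{\varepsilon,\Omega}U_{\varepsilon,\xi}$ by $U_{\varepsilon,\xi}$: since $\Phi[\cdot]$ is linear and $(f,g)\mapsto\int_\Omega\Phi[f]g$ is symmetric by (\ref{deflue}), the difference splits into terms $\int_\Omega\Phi[h]k$ one of whose factors is $(P_{\varepsilon,\Omega}U_{\varepsilon,\xi})^2-U_{\varepsilon,\xi}^2=-\eta_{\varepsilon,\xi}(2U_{\varepsilon,\xi}-\eta_{\varepsilon,\xi})$, which by (\ref{upu}) is bounded by $C(\varepsilon/d(\xi,\partial\Omega))^{N+2s}U_{\varepsilon,\xi}$; estimating $\Phi$ by the $s$-Riesz potential via (\ref{ctpw}) and using the pointwise decay of Lemma \ref{lemeguc} for $\Phi[U^2_{\varepsilon,\xi}]$ and of $W[U_{\varepsilon,\xi}]$, each such term is $O(\varepsilon^N\varepsilon^{2s}(\varepsilon/d(\xi,\partial\Omega))^{N+2s})$; on $D_{\varepsilon,R}$ one has $\varepsilon/d(\xi,\partial\Omega)\le\varepsilon^{\varpi}$, so with $\varpi$ as in (\ref{defwj}) this is $\le\varepsilon^N\varepsilon^{2s+\zeta_0(N-2s)}$ for $\zeta_0$ chosen just above $1/3$ — this is precisely the place where the lower bound on $\varpi$ in (\ref{defwj}) is used. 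It then remains to compute $\frac14\int_\Omega\Phi[U_{\varepsilon,\xi}^2]U_{\varepsilon,\xi}^2$: inserting the expansion of $\Phi[U_{\varepsilon,\xi}^2]$ from Lemma \ref{lemegu} and rescaling $x=\xi+\varepsilon z$, the contribution of $\varepsilon^{2s}W((x-\xi)/\varepsilon)$ is $\varepsilon^{N+2s}\int_{\mathbb R^N}U^2 W=\varepsilon^{N+2s}A_2$ up to the tail of $U^2W$ over $\mathbb R^N\setminus\Omega_{\varepsilon,\xi}$; the contribution of $-\varepsilon^N BH(\xi,x)$, after one more Taylor expansion of $H(\xi,\cdot)$ about $\xi$ using the bounds (\ref{lemeguh}) and the radial symmetry of $U$, is $-\varepsilon^{2N}B^2H(\xi,\xi)$ up to errors of order $\varepsilon^N\mu_{\varepsilon,\xi}$; and the remainder $O(\mu_{\varepsilon,\xi})$ of Lemma \ref{lemegu} contributes $\varepsilon^N O(\mu_{\varepsilon,\xi})$ after integration against $U_{\varepsilon,\xi}^2$.

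Collecting the two parts gives (\ref{ejbu}), provided one checks that \emph{every} remainder produced above — the tails of $U_{\varepsilon,\xi}^{p+1}$, of $U^2W$ and of $\int_{\Omega_{\varepsilon,\xi}}U^2$, the binomial remainder $\int_\Omega U_{\varepsilon,\xi}^{p-1}\eta_{\varepsilon,\xi}^2$, the error from replacing $P_{\varepsilon,\Omega}U_{\varepsilon,\xi}$ by $U_{\varepsilon,\xi}$ in the Coulomb term, and the $H$-Taylor error — is dominated, after using $d(\xi,\partial\Omega)\ge\varepsilon^{1-\varpi}$ and the window (\ref{defwj}), by one of $\varepsilon^N(\varepsilon/d(\xi,\partial\Omega))^{(1+p)(N+2s)-N}$, $\varepsilon^N\mu_{\varepsilon,\xi}$, $\varepsilon^N\varepsilon^{2s+\zeta_0(N-2s)}$. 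I expect this last bookkeeping step to be the main obstacle, and it is where the argument genuinely departs from the local case of \cite{dw06pisa}: because $(-\Delta)^s$ is nonlocal and $U$ decays only algebraically, $\eta_{\varepsilon,\xi}$ is controlled merely by the polynomial bound of Lemma \ref{lemupu} rather than an exponentially small one, so replacing $P_{\varepsilon,\Omega}U_{\varepsilon,\xi}$ by $U_{\varepsilon,\xi}$ in the Coulomb term costs a true power of $\varepsilon/d(\xi,\partial\Omega)$; balancing that power against the explicit terms $\varepsilon^{N+2s}A_2$ and $\varepsilon^{2N}B^2H(\xi,\xi)$ is exactly what selects the admissible range (\ref{defwj}) for $\varpi$ and, further along, the boundary-layer scale $\varepsilon^{2/3}$.
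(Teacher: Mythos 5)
Your proposal matches the paper's proof in both structure and substance: the paper likewise groups the quadratic and nonlinear terms by testing (\ref{Pp}) against $P_{\varepsilon,\Omega}U_{\varepsilon,\xi}$ and Taylor-expanding about $U_{\varepsilon,\xi}$ to produce $\varepsilon^N A_1+\frac{1}{2}\tau_{\varepsilon,\xi}$ plus the tail $O((\varepsilon/d(\xi,\partial\Omega))^{(1+p)(N+2s)-N})$, and it handles the Coulomb term exactly as you describe, replacing $P_{\varepsilon,\Omega}U_{\varepsilon,\xi}$ by $U_{\varepsilon,\xi}$ at a cost of $\varepsilon^{N+2s}(\varepsilon/d(\xi,\partial\Omega))^{N+2s}\le\varepsilon^N o(\varepsilon^{2s+\zeta_0(N-2s)})$ via Lemmas \ref{lemupu} and \ref{lemeguc} and the window (\ref{defwj}), then inserting Lemma \ref{lemegu} for the main term. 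Your bookkeeping of the remainders and your identification of where the lower bound on $\varpi$ enters are exactly the paper's.
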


    \begin{proof}
    Recalling the definition of $J_\varepsilon$ and using $P_{\varepsilon,\Omega}U_{\varepsilon,\xi}$ as a text function in (\ref{Pp}), we get
    \begin{equation}\label{ejbujb}
    \begin{split}
    J_{\varepsilon}(P_{\varepsilon,\Omega}U_{\varepsilon,\xi})= & \frac{1}{2}\int_{\mathbb{R}^N}\left(\varepsilon^{2s}|(-\Delta)^{\frac{s}{2}}P_{\varepsilon,\Omega}U_{\varepsilon,\xi}|^2+ (P _{\varepsilon,\Omega}U_{\varepsilon,\xi})^2\right)dx \\
    & -\frac{1}{p+1}\int_{\Omega} (P_{\varepsilon,\Omega}U_{\varepsilon,\xi})^{p+1}dx+\frac{1}{4}\int_{\Omega}P^2_{\varepsilon,\Omega}U_{\varepsilon,\xi} \Phi[ P^2_{\varepsilon,\Omega}U_{\varepsilon,\xi}] dx \\
    = & \frac{1}{2}\int_{\Omega}(U_{\varepsilon,\xi})^p P_{\varepsilon,\Omega}U_{\varepsilon,\xi}
    -\frac{1}{p+1}\int_{\Omega} (P_{\varepsilon,\Omega}U_{\varepsilon,\xi})^{p+1}dx  \\
    & +\frac{1}{4}\int_{\Omega}P^2_{\varepsilon,\Omega}U_{\varepsilon,\xi} \Phi[ P^2_{\varepsilon,\Omega}U_{\varepsilon,\xi}] dx  \\
    =: & J^1_{\varepsilon}
    +J^2_{\varepsilon}.
    \end{split}
    \end{equation}
    Firstly, we estimate $J^1_{\varepsilon}$. From $(\ref{upu})$ we know that as $\varepsilon\to 0$,
    $$
    |U_{\varepsilon,\xi}-P_{\varepsilon,\Omega}U_{\varepsilon,\xi}|\leq C \left(\frac{\varepsilon}{d(\xi,\partial\Omega)}\right)^{N+2s}\to 0,
    $$
    thus for any $l>0$~, it holds that
    \begin{equation*}
    \begin{split}
    \int_{\Omega}(U_{\varepsilon,\xi})^{p-l}(U_{\varepsilon,\xi}- P_{\varepsilon,\Omega}U_{\varepsilon,\xi})^{l+1}
    \leq & \left|\frac{U_{\varepsilon,\xi}- P_{\varepsilon,\Omega}U_{\varepsilon,\xi}}{U_{\varepsilon,\xi}}\right|^l_{\infty}\int_{\Omega}(U_{\varepsilon,\xi})^p(U_{\varepsilon,\xi}- P_{\varepsilon,\Omega}U_{\varepsilon,\xi})  \\
    = & o(1)\int_{\Omega}(U_{\varepsilon,\xi})^p(U_{\varepsilon,\xi}- P_{\varepsilon,\Omega}U_{\varepsilon,\xi})
    \end{split}
    \end{equation*}
    By using Taylor's expansion for $(P_{\varepsilon,\Omega}U_{\varepsilon,\xi})^{p+1}$ at $U_{\varepsilon,\xi}$, we have
    \begin{equation}\label{ejbuj1}
    \begin{split}
    J^1_{\varepsilon}
    = & \frac{1}{2}\int_{\Omega}(U_{\varepsilon,\xi})^p\left(U_{\varepsilon,\xi}-(U_{\varepsilon,\xi}-P_{\varepsilon,\Omega}U_{\varepsilon,\xi})\right)
    -\frac{1}{p+1}\int_{\Omega} (U_{\varepsilon,\xi}+(P_{\varepsilon,\Omega}U_{\varepsilon,\xi}-U_{\varepsilon,\xi}))^{p+1} \\
    = & \frac{1}{2}\int_{\Omega}(U_{\varepsilon,\xi})^{p+1}
    -\frac{1}{2}\int_{\Omega}(U_{\varepsilon,\xi})^p(U_{\varepsilon,\xi}-P_{\varepsilon,\Omega}U_{\varepsilon,\xi}) \\
    &-\int_{\Omega}\left(\frac{(U_{\varepsilon,\xi})^{p+1}}{p+1}+(U_{\varepsilon,\xi})^p(P_{\varepsilon,\Omega}U_{\varepsilon,\xi}-U_{\varepsilon,\xi})\right)
    + o(1)\int_{\Omega}(U_{\varepsilon,\xi})^p(U_{\varepsilon,\xi}- P_{\varepsilon,\Omega}U_{\varepsilon,\xi})  \\
    = & \left(\frac{1}{2}-\frac{1}{p+1}\right)\int_{\Omega}(U_{\varepsilon,\xi})^{p+1}
    +\left(\frac{1}{2}+o(1)\right)\int_{\Omega}(U_{\varepsilon,\xi})^p(U_{\varepsilon,\xi}-P_{\varepsilon,\Omega}U_{\varepsilon,\xi})  \\
    = & \varepsilon^N\left(\frac{1}{2}-\frac{1}{p+1}\right)\int_{\Omega_{\varepsilon,\xi}}U^{p+1}
    +\frac{1}{2}\int_{\Omega}(U_{\varepsilon,\xi})^p(U_{\varepsilon,\xi}- P_{\varepsilon,\Omega}U_{\varepsilon,\xi})  \\
    = & \varepsilon^N A+\frac{1}{2}\tau_{\varepsilon,\xi}
    -\varepsilon^N\left(\frac{1}{2}-\frac{1}{p+1}\right)\int_{\mathbb{R}^N\backslash\Omega_{\varepsilon,\xi}}U^{p+1}.
    \end{split}
    \end{equation}
    Noting that $U$ is small in $\mathbb{R}^N\backslash\Omega_{\varepsilon,\xi}$ if $\varepsilon$ is small, then we have
    \begin{equation}\label{ejbuj1p1}
    \begin{split}
    \int_{\mathbb{R}^N\backslash\Omega_{\varepsilon,\xi}}U^{p+1}
    \leq & C\int^{+\infty}_{\frac{d(\xi,\partial\Omega)}{\varepsilon}}\frac{t^{N-1}}{(1+t^{N+2s})^{p+1}}dt \\
    = & O\left(\left(\frac{\varepsilon}{d(\xi,\partial\Omega)}\right)^{(1+p)(N+2s)-N}\right)  \\
    \end{split}
    \end{equation}
    So it remains to estimate $J^2_{\varepsilon}$. We have
    \begin{equation}\label{ejbuj2}
    \begin{split}
    J^2_{\varepsilon}
    = & \frac{1}{4}\int_{\Omega}(U_{\varepsilon,\xi}+(P_{\varepsilon,\Omega}U_{\varepsilon,\xi}-U_{\varepsilon,\xi}))^2 \Phi[(U_{\varepsilon,\xi}+(P_{\varepsilon,\Omega}U_{\varepsilon,\xi}-U_{\varepsilon,\xi}))^2] dx \\
    = & \frac{1}{4}\int_{\Omega}U_{\varepsilon,\xi}^2 \Phi[U_{\varepsilon,\xi}^2] dx \\
    & + \int_{\Omega}
    \Big[
    U_{\varepsilon,\xi}(P_{\varepsilon,\Omega}U_{\varepsilon,\xi}-U_{\varepsilon,\xi}) \Phi[U_{\varepsilon,\xi}^2]
    +U_{\varepsilon,\xi}(P_{\varepsilon,\Omega}U_{\varepsilon,\xi}-U_{\varepsilon,\xi}) \Phi[(P_{\varepsilon,\Omega}U_{\varepsilon,\xi}-U_{\varepsilon,\xi})^2] \\
    & +U_{\varepsilon,\xi}(P_{\varepsilon,\Omega}U_{\varepsilon,\xi}-U_{\varepsilon,\xi}) \Phi[U_{\varepsilon,\xi}(P_{\varepsilon,\Omega}U_{\varepsilon,\xi}-U_{\varepsilon,\xi})]
    +\frac{1}{2}(P_{\varepsilon,\Omega}U_{\varepsilon,\xi}-U_{\varepsilon,\xi})^2 \Phi[U_{\varepsilon,\xi}^2] \\
    & +\frac{1}{4}(P_{\varepsilon,\Omega}U_{\varepsilon,\xi}-U_{\varepsilon,\xi})^2 \Phi[(P_{\varepsilon,\Omega}U_{\varepsilon,\xi}-U_{\varepsilon,\xi})^2]
    \Big] dx \\
    := & J^{21}_{\varepsilon}+J^{22}_{\varepsilon}.
    \end{split}
    \end{equation}
    From Lemma \ref{lemegu}, we obtain that
    \begin{equation}\label{ejbuj21}
    \begin{split}
    4J^{21}_{\varepsilon}
    = & \int_{\Omega}U^2_{\varepsilon,\xi} \Phi[U_{\varepsilon,\xi}^2] dx \\
    = & \varepsilon^{2s}\int_{\Omega}U^2_{\varepsilon,\xi} W\left(\frac{x-\xi}{\varepsilon}\right) dx
    -\varepsilon^{N}B \int_{\Omega}U^2_{\varepsilon,\xi}H(x,\xi) dx
     + \int_{\Omega}U^2_{\varepsilon,\xi} O\left(\mu_{\varepsilon,\xi}\right) dx \\
    = & \varepsilon^{N+2s}\int_{\Omega_{\varepsilon,\xi}}U^2 W dz
    -\varepsilon^{2N}B H(\xi,\xi) \int_{\Omega_{\varepsilon,\xi}}U^2dz
     + \varepsilon^{N}O\left(\mu_{\varepsilon,\xi}\right)\int_{\Omega_{\varepsilon,\xi}}U^2 dz \\
    = & \varepsilon^{N+2s}\int_{\mathbb{R}^N}U^2 W dz
    -\varepsilon^{2N}B H(\xi,\xi) \int_{\mathbb{R}^N}U^2 dz  + O\left(\frac{\varepsilon^{3N+4s}}{d(\xi,\partial\Omega)^{2N+2s}}\right)\\
    & + \varepsilon^{N}O\left(\mu_{\varepsilon,\xi}\right)\int_{\Omega_{\varepsilon,\xi}}U^2 dz  \\
    = & \varepsilon^{N+2s}A_2
    -\varepsilon^{2N}B^2 H(\xi,\xi)
    + \varepsilon^{N}O\left(\mu_{\varepsilon,\xi}\right).
    \end{split}
    \end{equation}
    On the other hand, since $P_{\varepsilon,\Omega}U_{\varepsilon,\xi}\leq U_{\varepsilon,\xi}$ and by the behavior of $\Phi[U_{\varepsilon,\xi}^2]$ as shown in Lemma \ref{lemeguc}, we have 
    \begin{equation*}
    \begin{split}
    |J^{22}_{\varepsilon}|
    \leq & C\int_{\Omega}U_{\varepsilon,\xi}|P_{\varepsilon,\Omega}U_{\varepsilon,\xi}-U_{\varepsilon,\xi}| \Phi[U_{\varepsilon,\xi}^2]dx \\
    \leq & C|P_{\varepsilon,\Omega}U_{\varepsilon,\xi}-U_{\varepsilon,\xi}|_{\infty}\int_{\Omega}U_{\varepsilon,\xi} \Phi[U_{\varepsilon,\xi}^2] dx  \\
    \leq & C \left(\frac{\varepsilon}{d(\xi,\partial\Omega)}\right)^{N+2s}\varepsilon^{2s} \varepsilon^{N} \int_{\Omega_{\varepsilon,\xi}}\frac{1}{(1+|x|^{N-2s})(1+|x|^{N+2s})} dx  \\
    \leq & C\varepsilon^{N+2s+\varpi(N+2s)}.
    \end{split}
    \end{equation*}
    From (\ref{defwj}), we deduce that
    \begin{equation*}
    N+2s+\varpi (N+2s)>\frac{4N+4s}{3},
    \end{equation*}
    then we can choose $\zeta_0\in(\frac{1}{3},1)$ closes to $\frac{1}{3}$ which is same as in Lemma \ref{lemel}, such that
    \begin{equation}\label{ejbuj22}
    \begin{split}
    |J^{22}_{\varepsilon}|= & \varepsilon^{N}o\left(\varepsilon^{2s+\zeta_0(N-2s)}\right).
    \end{split}
    \end{equation}
    Thus the result holds.
    \end{proof}

    Now, let us estimate $\tau_{\varepsilon,\xi}$ which is defined in (\ref{ejbud3}). In order to obtain a proper result, we will introduce a new ``nonlocal normal derivative" which is simply and helpful for this estimation. It is well known the Green's formula for classical Laplacian $\Delta$ that
    \begin{equation*}
    \int_{\Omega}\nabla u\cdot \nabla v=\int_{\Omega}v(-\Delta)u+\int_{\partial\Omega}v\partial_{\nu}u.
    \end{equation*}
    In \cite{drv}, Dipierro, Ros-oton and Valdinoci have established a new ``nonlocal normal derivative" $\mathcal{N}_s$ which is defined as
    \begin{equation}\label{defpfl}
    \mathcal{N}_s u(x):=C_{N,s} \int_{\Omega}\frac{u(x)-u(y)}{|x-y|^{N+2s}}dy,\ \ x\in \mathbb{R}^N\backslash\overline{\Omega},
    \end{equation}
    and they get the Green's formula corresponding to fractional Laplacian $(-\Delta)^s$,
    \begin{equation}\label{defgffl}
    \frac{C_{N,s}}{2}\int_{\mathcal{D}_{\Omega}}\frac{(u(x)-u(y))(v(x)-v(y))}{|x-y|^{N+2s}} dxdy=\int_{\Omega}v(-\Delta)^s u+\int_{\mathbb{R}^N\backslash\Omega}v\mathcal{N}_s u.
    \end{equation}

    Following the proof of \cite[Lemma 2.1]{dy2}, we have:

    \begin{lemma}\label{lemetx}
    If $\varepsilon$ is small, then there are $0<c_1\leq c_2$ such that
    \begin{equation}\label{tx}
    \begin{split}
    c_1\varepsilon^N\left(\frac{\varepsilon}{d(\xi,\partial\Omega)}\right)^{N+4s}
    \leq\tau_{\varepsilon,\xi}
    \leq c_2\varepsilon^N\left(\frac{\varepsilon}{d(\xi,\partial\Omega)}\right)^{N+4s}.
    \end{split}
    \end{equation}
    \end{lemma}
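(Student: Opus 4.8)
\emph{Rescaling and an exact boundary identity.}
Put $\rho=d(\xi,\partial\Omega)/\varepsilon$ (so $\rho\to+\infty$ for $\xi\in D_{\varepsilon,R}$), $\Omega_{\varepsilon,\xi}=\varepsilon^{-1}(\Omega-\xi)$, and pass to the rescaled functions $\tilde\eta(z)=\eta_{\varepsilon,\xi}(\varepsilon z+\xi)$ and $V_\varepsilon(z)=P_{\varepsilon,\Omega}U_{\varepsilon,\xi}(\varepsilon z+\xi)=U(z)-\tilde\eta(z)$, so that $\tau_{\varepsilon,\xi}=\varepsilon^N\int_{\Omega_{\varepsilon,\xi}}U^p\tilde\eta$; by (\ref{defetae}), $\tilde\eta$ solves $(-\Delta)^s\tilde\eta+\tilde\eta=0$ in $\Omega_{\varepsilon,\xi}$ with $\tilde\eta=U$ outside, while $V_\varepsilon$ solves $(-\Delta)^sV_\varepsilon+V_\varepsilon=U^p$ in $\Omega_{\varepsilon,\xi}$ with $V_\varepsilon=0$ outside. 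Since $U^p=(-\Delta)^sU+U$ on $\mathbb{R}^N$, I apply the Green identity (\ref{defgffl}) on $\Omega_{\varepsilon,\xi}$ to the pairs $(U,\tilde\eta)$ and $(\tilde\eta,U)$ and subtract (the two quadratic forms on the left agree); using $(-\Delta)^s\tilde\eta=-\tilde\eta$ in $\Omega_{\varepsilon,\xi}$ and $\tilde\eta=U$ in $\mathbb{R}^N\setminus\Omega_{\varepsilon,\xi}$, everything collapses to
\[
\tau_{\varepsilon,\xi}
=-\varepsilon^N\!\!\int_{\mathbb{R}^N\setminus\Omega_{\varepsilon,\xi}}\!\!U\,\mathcal{N}_sV_\varepsilon
=\varepsilon^N C_{N,s}\!\!\int_{\mathbb{R}^N\setminus\Omega_{\varepsilon,\xi}}\!\!\int_{\Omega_{\varepsilon,\xi}}\frac{U(x)\,V_\varepsilon(y)}{|x-y|^{N+2s}}\,dy\,dx ,
\]
where I used $V_\varepsilon(x)=0$ off $\Omega_{\varepsilon,\xi}$ and the definition (\ref{defpfl}) of $\mathcal N_s$; in particular $\tau_{\varepsilon,\xi}\ge0$. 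This is the fractional analogue of the integration by parts used in \cite[Lemma 2.1]{dy2}, and it is precisely where the nonlocal normal derivative of \cite{drv} enters.

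\emph{Lower bound.}
Discard all but $y\in B_1(0)\subset\Omega_{\varepsilon,\xi}$, where $V_\varepsilon=U-\tilde\eta\ge\tfrac{\alpha}{2}-C\rho^{-(N+2s)}\ge c>0$ by (\ref{bu}) and Lemma \ref{lemupu}. Since $B_\rho(0)\subset\Omega_{\varepsilon,\xi}$, any $x\in\mathbb{R}^N\setminus\Omega_{\varepsilon,\xi}$ has $|x|\ge\rho$, hence $|x-y|\le2|x|$ for $y\in B_1(0)$; together with $U(x)\ge c|x|^{-(N+2s)}$ from (\ref{bu}), this gives $\tau_{\varepsilon,\xi}\ge c\varepsilon^N\int_{\mathbb{R}^N\setminus\Omega_{\varepsilon,\xi}}|x|^{-2(N+2s)}\,dx$. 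As $\partial\Omega$ is Lipschitz, $\mathbb{R}^N\setminus\Omega$ satisfies a uniform exterior cone condition at the point $\bar\xi\in\partial\Omega$ closest to $\xi$; after rescaling, $\mathbb{R}^N\setminus\Omega_{\varepsilon,\xi}$ contains a cone of fixed aperture with vertex $(\bar\xi-\xi)/\varepsilon$ (of modulus $\rho$), pointing away from $\Omega_{\varepsilon,\xi}$ and of height $\gtrsim\rho$; on the part of this cone lying in $B_{2\rho}(0)$ — of measure $\gtrsim\rho^N$ and with $|x|\le2\rho$ there — the integral is $\gtrsim\rho^{N-2(N+2s)}=\rho^{-(N+4s)}$. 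Hence $\tau_{\varepsilon,\xi}\ge c_1\varepsilon^N\big(\varepsilon/d(\xi,\partial\Omega)\big)^{N+4s}$.

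\emph{Upper bound.}
Now use $0\le V_\varepsilon\le U$ in the representation, splitting $\mathbb{R}^N\setminus\Omega_{\varepsilon,\xi}$ according to whether $\mathrm{dist}(x,\Omega_{\varepsilon,\xi})\ge1$ or $<1$, and the inner integral at $|y|=|x|/2$. Whenever $x$ is at distance $\ge1$ from $\Omega_{\varepsilon,\xi}$, or $|y|<|x|/2$, the kernel is harmless ($|x-y|\ge c\max(|x|,1)$) and, since $|x|\ge\rho$, (\ref{bu}) makes that part $\lesssim\varepsilon^N\int_{|x|\ge\rho}|x|^{-2(N+2s)}\,dx\lesssim\varepsilon^N\rho^{-(N+4s)}$. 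The remaining part — $x$ within distance $1$ of $\partial\Omega_{\varepsilon,\xi}$, $|y|\ge|x|/2$, $|x-y|$ possibly small — is delicate, because $\int_{\Omega_{\varepsilon,\xi}}|x-y|^{-(N+2s)}\,dy\simeq\mathrm{dist}(x,\partial\Omega_{\varepsilon,\xi})^{-2s}$ is not integrable in the normal variable when $s\ge\tfrac12$; instead one invokes the sharp boundary decay $V_\varepsilon(y)\lesssim\mathrm{dist}(y,\partial\Omega_{\varepsilon,\xi})^{s}\,\sup_{B_1(y)}U$ for $y$ near $\partial\Omega_{\varepsilon,\xi}$ — legitimate since $|(-\Delta)^sV_\varepsilon|=|U^p-V_\varepsilon|\le CU$ in $\Omega_{\varepsilon,\xi}$ and $\Omega$ has an interior cone condition — which turns the singularity into the integrable $\mathrm{dist}(x,\partial\Omega_{\varepsilon,\xi})^{-s}$. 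Parametrising this region by (boundary point $\sigma$, normal distance $t\in(0,1)$), rescaling $\sigma=\varepsilon^{-1}(\zeta-\xi)$ with $\zeta\in\partial\Omega$, and using $|\zeta-\xi|\ge d(\xi,\partial\Omega)$ together with $\int_{\partial\Omega}|\zeta-\xi|^{-2(N+2s)}\,d\zeta\lesssim d(\xi,\partial\Omega)^{-(N+4s+1)}$, this part is $\lesssim\varepsilon^N\big(\varepsilon/d(\xi,\partial\Omega)\big)^{N+4s+1}=\varepsilon^N\,o\big((\varepsilon/d(\xi,\partial\Omega))^{N+4s}\big)$. Collecting the pieces, $\tau_{\varepsilon,\xi}\le c_2\varepsilon^N\big(\varepsilon/d(\xi,\partial\Omega)\big)^{N+4s}$.

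\emph{Main obstacle.}
The one non-routine point is the near-boundary estimate above: the nonlocal normal derivative $\mathcal N_sV_\varepsilon$ (equivalently, the fine behaviour of $\tilde\eta$ close to $\partial\Omega_{\varepsilon,\xi}$) genuinely blows up, and controlling it requires the optimal $s$-boundary regularity of the fractional Dirichlet problem with an amplitude matched to the algebraic decay of $U$ — a fact with no analogue in the $s=1$ argument of \cite{dy2}, where the corresponding step is an elementary Hopf-lemma estimate. An alternative way to obtain the upper bound, avoiding boundary regularity altogether, is to compare $\tilde\eta$ with the solution of the same equation on the inscribed ball $B_{d(\xi,\partial\Omega)/\varepsilon}(0)$ with exterior value $\sup_{\mathbb{R}^N\setminus\Omega_{\varepsilon,\xi}}U$; its interior values are smaller by the factor $(\varepsilon/d(\xi,\partial\Omega))^{2s}$ thanks to the algebraic decay of the fundamental solution of $(-\Delta)^s+1$, and then splitting $\int_{\Omega_{\varepsilon,\xi}}U^p\tilde\eta$ over $B_{d/2\varepsilon}(0)$, $B_{d/\varepsilon}(0)\setminus B_{d/2\varepsilon}(0)$ and the rest — using $p\ge1$ on the last two — reproduces the same bound.
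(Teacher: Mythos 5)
Your starting identity
\[
\tau_{\varepsilon,\xi}=-\varepsilon^N\int_{\mathbb{R}^N\setminus\Omega_{\varepsilon,\xi}}U\,\mathcal N_sV_\varepsilon
=\varepsilon^NC_{N,s}\int_{\mathbb{R}^N\setminus\Omega_{\varepsilon,\xi}}\int_{\Omega_{\varepsilon,\xi}}\frac{U(x)V_\varepsilon(y)}{|x-y|^{N+2s}}\,dy\,dx
\]
is exactly the paper's formula (\ref{txex}) (with $V_\varepsilon=P_{\varepsilon,\Omega_{\varepsilon,\xi}}U$), obtained by the same Green's identity (\ref{defgffl}) from \cite{drv}; so the backbone of your argument coincides with the paper's. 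For the lower bound the paper instead splits $\tau_{\varepsilon,\xi}$ into the $\mathcal N_s\eta_{\varepsilon,\xi}$ and $\mathcal N_sU_{\varepsilon,\xi}$ contributions, discards the first as $\|\eta_{\varepsilon,\xi}\|_\varepsilon^2>0$, and then localizes the remaining double integral of $U(x)(U(y)-U(x))$ onto two carefully chosen balls; your version, working directly with the manifestly nonnegative integrand $U(x)V_\varepsilon(y)$ and a uniform exterior cone at the nearest boundary point, reaches the same bound and is in fact cleaner (it avoids having to justify that the discarded part of the sign-changing integrand is nonnegative). For the upper bound the two arguments genuinely diverge: the paper bounds $V_\varepsilon\le U$ and controls the inner integral through the elementary splitting involving $\bigl||x|^{N+2s}-|y|^{N+2s}\bigr|$ together with the computation in Remark \ref{remtxexxw}, whereas your second (``alternative'') route — compare $\tilde\eta$ on the inscribed ball $B_{\rho}(0)$, $\rho=d(\xi,\partial\Omega)/\varepsilon$, with the solution having constant exterior datum $\sup_{\mathbb{R}^N\setminus\Omega_{\varepsilon,\xi}}U\simeq\rho^{-(N+2s)}$, gain the factor $\rho^{-2s}$ on $B_{\rho/2}$ via the rescaled barrier $\psi(\cdot/\rho)+C\rho^{-2s}$, and then split $\int U^p\tilde\eta$ over $B_{\rho/2}$, $B_\rho\setminus B_{\rho/2}$ and the remainder using $p\ge1$ — is correct, elementary, and arguably simpler than the paper's.

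One concrete caveat: your \emph{primary} route to the upper bound leans on the boundary decay $V_\varepsilon(y)\lesssim \mathrm{dist}(y,\partial\Omega_{\varepsilon,\xi})^s\sup_{B_1(y)}U$, justified by ``an interior cone condition''. An interior cone gives a lower barrier, not an upper one, and the optimal $d^s$ upper decay for the fractional Dirichlet problem requires more boundary regularity than the Lipschitz hypothesis of this paper (e.g.\ an exterior ball, or $C^{1,1}$); on a genuinely Lipschitz domain one only gets $d^\gamma$ with $\gamma$ depending on the cone aperture, and for $\gamma\le 2s-1$ (possible when $s>1/2$) the normal integral you need would still diverge. So that route, as stated, does not close for general Lipschitz $\Omega$. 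This does not sink the proof, because your alternative inscribed-ball argument bypasses boundary regularity entirely and yields the required bound $\tau_{\varepsilon,\xi}\le c_2\varepsilon^N(\varepsilon/d(\xi,\partial\Omega))^{N+4s}$; I would simply promote that alternative to the main argument.
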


    \begin{proof}
    Multiplying (\ref{defetae}) by $U_{\varepsilon,\xi}$, $\eta_{\varepsilon,\xi}$ respectively and integrating, we get
    \begin{equation}\label{txu}
    \begin{split}
    \varepsilon^{2s}\int_{\Omega}U_{\varepsilon,\xi}(-\Delta)^s \eta_{\varepsilon,\xi}+ \int_{\Omega}U_{\varepsilon,\xi}\eta_{\varepsilon,\xi}=0,
    \end{split}
    \end{equation}
    \begin{equation}\label{txeta}
    \begin{split}
    \varepsilon^{2s}\int_{\Omega}\eta_{\varepsilon,\xi}(-\Delta)^s \eta_{\varepsilon,\xi}+ \int_{\Omega}\eta_{\varepsilon,\xi}\eta_{\varepsilon,\xi}=0.
    \end{split}
    \end{equation}
    Then multiplying (\ref{Pp}) by $\eta_{\varepsilon,\xi}$ and integrating by parts, we obtain
    \begin{equation}\label{txe}
    \begin{split}
    \tau_{\varepsilon,\xi} = & \int_{\Omega}(U_{\varepsilon,\xi})^p(U_{\varepsilon,\xi}-P_{\varepsilon,\xi}U_{\varepsilon,\xi})  \\
    = & \varepsilon^{2s}\int_{\Omega}\eta_{\varepsilon,\xi}(-\Delta)^s P_{\varepsilon,\Omega}U_{\varepsilon,\xi}
    +\int_{\Omega}\eta_{\varepsilon,\xi}P_{\varepsilon,\Omega}U_{\varepsilon,\xi}  \\
    = & \varepsilon^{2s}\int_{\Omega}\eta_{\varepsilon,\xi}(-\Delta)^s (U_{\varepsilon,\xi}-\eta_{\varepsilon,\xi})
    +\int_{\Omega}\eta_{\varepsilon,\xi}(U_{\varepsilon,\xi}-\eta_{\varepsilon,\xi})  \\
    = & \varepsilon^{2s}\int_{\Omega}\eta_{\varepsilon,\xi}(-\Delta)^s U_{\varepsilon,\xi}
    +\int_{\Omega}\eta_{\varepsilon,\xi}U_{\varepsilon,\xi}  -\left( \varepsilon^{2s}\int_{\Omega}\eta_{\varepsilon,\xi}(-\Delta)^s \eta_{\varepsilon,\xi}+\int_{\Omega}\eta_{\varepsilon,\xi}\eta_{\varepsilon,\xi}\right)  \\
    = & \varepsilon^{2s}\int_{\Omega}\eta_{\varepsilon,\xi}(-\Delta)^s U_{\varepsilon,\xi}
    -\varepsilon^{2s}\int_{\Omega}U_{\varepsilon,\xi}(-\Delta)^s \eta_{\varepsilon,\xi}  \\
    = & \varepsilon^{2s}\int_{\mathbb{R}^N\backslash\Omega}U_{\varepsilon,\xi}\mathcal{N}_s \eta_{\varepsilon,\xi}
    -\varepsilon^{2s}\int_{\mathbb{R}^N\backslash\Omega}\eta_{\varepsilon,\xi}\mathcal{N}_s U_{\varepsilon,\xi}  \\
    = & \varepsilon^{2s}\int_{\mathbb{R}^N\backslash\Omega}U_{\varepsilon,\xi}\mathcal{N}_s \eta_{\varepsilon,\xi}
    -\varepsilon^{2s}\int_{\mathbb{R}^N\backslash\Omega}U_{\varepsilon,\xi}\mathcal{N}_s U_{\varepsilon,\xi}.
    \end{split}
    \end{equation}
    Besides, using the Green's formula (\ref{defgffl}) for (\ref{txeta}), we obtain
    \begin{equation}\label{txea}
    \begin{split}
    \varepsilon^{2s}\int_{\mathbb{R}^N\backslash\Omega}U_{\varepsilon,\xi}\mathcal{N}_s \eta_{\varepsilon,\xi}
    =\varepsilon^{2s}\int_{\mathbb{R}^N\backslash\Omega}\eta_{\varepsilon,\xi}\mathcal{N}_s \eta_{\varepsilon,\xi}
    =\|\eta_{\varepsilon,\xi}\|^2_{\varepsilon}>0.
    \end{split}
    \end{equation}
    Combining (\ref{txe}) and (\ref{txea}), we obtain
    \begin{equation}\label{txed}
    \begin{split}
    \tau_{\varepsilon,\xi}> & -\varepsilon^{2s}\int_{\mathbb{R}^N\backslash\Omega}U_{\varepsilon,\xi}\mathcal{N}_s U_{\varepsilon,\xi}  \\
    = & -C_{N,s} \varepsilon^{2s}\int_{\mathbb{R}^N\backslash\Omega}U_{\varepsilon,\xi}(x)\int_{\Omega}\frac{U_{\varepsilon,\xi}(x)-U_{\varepsilon,\xi}(y)}{|x-y|^{N+2s}}dydx \\
    = & -C_{N,s} \varepsilon^{N}\int_{\mathbb{R}^N\backslash\Omega_{\varepsilon,\xi}}U(x)\int_{\Omega_{\varepsilon,\xi}}\frac{U(x)-U(y)}{|x-y|^{N+2s}}dydx \\
    = & C_{N,s} \varepsilon^{N}\int_{\mathbb{R}^N\backslash\Omega_{\varepsilon,\xi}}U(x)\int_{\Omega_{\varepsilon,\xi}}\frac{U(y)-U(x)}{|y-x|^{N+2s}}dydx.
    \end{split}
    \end{equation}
    We can choose small $\varrho>0$ which is independent of $\varepsilon$ such that there exists $c>0$ such that $U(y)-U(x)\geq c U(y)$, $\forall x\in \mathbb{R}^N\backslash \Omega_{\varepsilon,\xi},\ \forall y\in B_{\frac{\varrho d(\xi,\partial\Omega)}{\varepsilon}}(0)$.
    Define
    \begin{equation*}
    \mathcal{S}=\{x\in \mathbb{R}^N\backslash\Omega_{\varepsilon,\xi}: \varrho d(\xi,\partial\Omega)/\varepsilon\leq d(x,\partial\Omega_{\varepsilon,\xi})\leq 3\varrho d(\xi,\partial\Omega)/\varepsilon\}.
    \end{equation*}
    Then we can find a suitable $\xi^1\in\mathcal{S}$ such that $B_{\frac{\varrho d(\xi,\partial\Omega)}{\varepsilon}}(\xi^1)\subset \mathcal{S}$ and for all $x\in B_{\frac{\varrho d(\xi,\partial\Omega)}{\varepsilon}}(\xi^1)$, $(1+\varrho)d(\xi,\partial\Omega)/\varepsilon\leq |x| \leq (1+3\varrho)d(\xi,\partial\Omega)/\varepsilon$.
    Then $\forall x\in B_{\frac{\varrho d(\xi,\partial\Omega)}{\varepsilon}}(\xi^1),\ \forall y\in B_{\frac{\varrho d(\xi,\partial\Omega)}{\varepsilon}}(0)$, it holds that
    \begin{equation*}
    d(\xi,\partial\Omega)/\varepsilon\leq |x-y|\leq (1+4\varrho)d(\xi,\partial\Omega)/\varepsilon.
    \end{equation*}
    Noticing the fact that
    \begin{equation*}
    B_{R}(0)\subset B_{\frac{\varrho d(\xi,\partial\Omega)}{\varepsilon}}(0)\subset \Omega_{\varepsilon,\xi}
    \end{equation*}
    for large $R>0$, and by (\ref{bu}), we obtain
    \begin{equation}\label{txeam}
    \begin{split}
    \tau_{\varepsilon,\xi}
    \geq & C_1 \varepsilon^{N}\int_{B_{\frac{\varrho  d(\xi,\partial\Omega)}{\varepsilon}}(\xi^1)}U(x)\int_{B_{\frac{\varrho  d(\xi,\partial\Omega)}{\varepsilon}}(0)}\frac{U(y)-U(x)}{|y-x|^{N+2s}}dydx \\
    \geq & C_2 \varepsilon^{N}\int_{B_{\frac{\varrho  d(\xi,\partial\Omega)}{\varepsilon}}(\xi^1)}U(x)\int_{B_{\frac{\varrho  d(\xi,\partial\Omega)}{\varepsilon}}(0)}\frac{U(y)}{|y-x|^{N+2s}}dydx \\
    \geq & C_3\varepsilon^{N}\left(\frac{\varepsilon}{d(\xi,\partial\Omega)}\right)^{N+2s}\left(\frac{d(\xi,\partial\Omega)}{\varepsilon}\right)^{N}
    \left(\frac{\varepsilon}{d(\xi,\partial\Omega)}\right)^{N+2s}
    \int_{B_{\frac{\varrho  d(\xi,\partial\Omega)}{\varepsilon}}(0)}U(y)dy  \\
    \geq & C_4
    \left(\frac{\varepsilon}{d(\xi,\partial\Omega)}\right)^{N+4s}\int_{B_R(0)}U(y)dy  \\
    \geq & C_5\varepsilon^{N}\left(\frac{\varepsilon}{d(\xi,\partial\Omega)}\right)^{N+4s}.
    \end{split}
    \end{equation}

    On the other hand, from (\ref{txe}), we have
    \begin{equation}\label{txex}
    \begin{split}
    \tau_{\varepsilon,\xi}= & \varepsilon^{2s}\int_{\mathbb{R}^N\backslash\Omega}U_{\varepsilon,\xi}\left(\mathcal{N}_s \eta_{\varepsilon,\xi}
    -\mathcal{N}_s U_{\varepsilon,\xi}\right)  \\
    = & C_{N,s}\varepsilon^{2s}\int_{\mathbb{R}^N\backslash\Omega}U_{\varepsilon,\xi}(x)\int_{\Omega}
    \frac{P_{\varepsilon,\Omega}U_{\varepsilon,\xi}(y)-P_{\varepsilon,\Omega}U_{\varepsilon,\xi}(x)}{|y-x|^{N+2s}}dydx  \\
    = & C_{N,s}\varepsilon^{2s}\int_{\mathbb{R}^N\backslash\Omega}U_{\varepsilon,\xi}(x)\int_{\Omega}
    \frac{P_{\varepsilon,\Omega}U_{\varepsilon,\xi}(y)}{|y-x|^{N+2s}}dydx  \\
    & -C_{N,s}\varepsilon^{2s}\int_{\mathbb{R}^N\backslash\Omega}U_{\varepsilon,\xi}(x)P_{\varepsilon,\Omega}U_{\varepsilon,\xi}(x)\int_{\Omega}
    \frac{1}{|y-x|^{N+2s}}dydx  \\
    = & C_{N,s}\varepsilon^{2s}\int_{\mathbb{R}^N\backslash\Omega}U_{\varepsilon,\xi}(x)\int_{\Omega}
    \frac{P_{\varepsilon,\Omega}U_{\varepsilon,\xi}(y)}{|y-x|^{N+2s}}dydx \\
    = & C_{N,s}\varepsilon^{N}\int_{\mathbb{R}^N\backslash\Omega_{\varepsilon,\xi}}U(x)\int_{\Omega_{\varepsilon,\xi}}\frac{P_{\varepsilon,\Omega_{\varepsilon,\xi}}U(y)}{|y-x|^{N+2s}}dydx.
    \end{split}
    \end{equation}
    If $x\in \mathbb{R}^N\backslash\Omega_{\varepsilon,\xi}$, then $\frac{1}{|x|^{N+2s}}\leq \frac{2}{1+|x|^{N+2s}}$ since $|x|>>1$. Then from
    \cite[Lemma 5.1]{dddv}, we have
    \begin{equation}\label{txexxhbg}
    \begin{split}
    \tau_{\varepsilon,\xi}\leq
    C\varepsilon^{N}\int_{\mathbb{R}^N\backslash\Omega_{\varepsilon,\xi}}\frac{1}{(1+|x|^{N+2s})^2}dx
    \leq C'\varepsilon^{N}\left(\frac{\varepsilon}{d(\xi,\partial\Omega)}\right)^{N+4s}.
    \end{split}
    \end{equation}
    In fact, from (\ref{defetae}), we know that $\eta_{\varepsilon,\xi}\geq 0$ in $\mathbb{R}^N$, i.e. $P_{\varepsilon,\Omega}U_{\varepsilon,\xi}\leq
    U_{\varepsilon,\xi}$.
    Therefore,
    \begin{equation}\label{txexx}
    \begin{split}
    \tau_{\varepsilon,\xi}\leq & C_{N,s}\varepsilon^{2s}\int_{\mathbb{R}^N\backslash\Omega}U_{\varepsilon,\xi}(x)\int_{\Omega}
    \frac{U_{\varepsilon,\xi}(y)}{|y-x|^{N+2s}}dydx  \\
    = & C_{N,s}\varepsilon^{N}\int_{\mathbb{R}^N\backslash\Omega_{\varepsilon,\xi}}U(x)\int_{\Omega_{\varepsilon,\xi}}\frac{U(y)}{|y-x|^{N+2s}}dydx.
    \end{split}
    \end{equation}
    Moreover,
    \begin{equation*}
    \begin{split}
    \int_{\Omega_{\varepsilon,\xi}}\frac{U(y)}{|y-x|^{N+2s}}dy
    \leq & \int_{\Omega_{\varepsilon,\xi}}\frac{U(y)}{||x|^{N+2s}-|y|^{N+2s}|}dy  \\
    \leq & C_6\int_{\Omega_{\varepsilon,\xi}}\frac{1}{(1+|y|^{N+2s})(||x|^{N+2s}-|y|^{N+2s}|)}dy  \\
    \leq & \frac{C_6}{1+|x|^{N+2s}}\int_{\Omega_{\varepsilon,\xi}}
    \left(\frac{1}{1+|y|^{N+2s}}+\frac{1}{||x|^{N+2s}-|y|^{N+2s}|}\right)dy,
    \end{split}
    \end{equation*}
    then
    \begin{equation*}
    \begin{split}
    \tau_{\varepsilon,\xi}\leq &
    C_6\varepsilon^{N}\int_{\mathbb{R}^N\backslash\Omega_{\varepsilon,\xi}}\frac{1}{(1+|x|^{N+2s})^2}dx
    \int_{\Omega_{\varepsilon,\xi}}\frac{1}{1+|y|^{N+2s}}dy  \\
    & +C_6\varepsilon^{N}\int_{\mathbb{R}^N\backslash\Omega_{\varepsilon,\xi}}\frac{1}{(1+|x|^{N+2s})^2}
    \int_{\Omega_{\varepsilon,\xi}}\frac{1}{||x|^{N+2s}-|y|^{N+2s}|}dydx \\
    \leq & C_7\varepsilon^{N}\left(\frac{\varepsilon}{d(\xi,\partial\Omega)}\right)^{N+4s}  \\
    & +C_6\varepsilon^{N}\int_{\mathbb{R}^N\backslash\Omega_{\varepsilon,\xi}}\frac{1}{(1+|x|^{N+2s})^2}
    \int_{\Omega_{\varepsilon,\xi}}\frac{1}{||x|^{N+2s}-|y|^{N+2s}|}dydx.
    \end{split}
    \end{equation*}
    What's more,
    \begin{equation}\label{txexxw}
    \begin{split}
    & \int_{\mathbb{R}^N\backslash\Omega_{\varepsilon,\xi}}\frac{1}{(1+|x|^{N+2s})^2}
    \int_{\Omega_{\varepsilon,\xi}}\frac{1}{||x|^{N+2s}-|y|^{N+2s}|}dydx  \\
    \leq & \int_{\mathbb{R}^N\backslash\Omega_{\varepsilon,\xi}}\frac{1}{(1+|x|^{N+2s})^2}
    \int_{\Omega_{\varepsilon,\xi}}\frac{1}{||x|^N-|y|^N|(|x|^{2s}+|y|^{2s})}dydx  \\
    \leq & \int_{\mathbb{R}^N\backslash\Omega_{\varepsilon,\xi}}\frac{1}{(1+|x|^{N+2s})^2}\frac{1}{|x|^{2s}}
    \int_{\Omega_{\varepsilon,\xi}}\frac{1}{||x|^N-|y|^N|}dydx \\
    = & o\left(\left(\frac{\varepsilon}{d(\xi,\partial\Omega)}\right)^{N+4s}\right),
    \end{split}
    \end{equation}
    thus we have
    \begin{equation}\label{txexxhb}
    \begin{split}
    \tau_{\varepsilon,\xi}
    \leq & C_8\varepsilon^{N}\left(\frac{\varepsilon}{d(\xi,\partial\Omega)}\right)^{N+4s}.
    \end{split}
    \end{equation}
    Thus the result follows by (\ref{txeam}) and (\ref{txexxhb}).
    \end{proof}

    \begin{remark}\label{remtxexxw}\rm
    We remark that the integral
    $$
    \int_{\mathbb{R}^N\backslash B_R(0)}\frac{1}{(1+|x|^{N+2s})^2}\frac{1}{|x|^{2s}}\int_{B_R(0)}\frac{1}{||x|^N-|y|^N|}dydx
    $$
    is integrable for each $R>0$, moreover, this integral value is decreasing for large $R>0$. Indeed,
    \begin{equation*}
    \begin{split}
    & \int_{\mathbb{R}^N\backslash B_R(0)}\frac{1}{(1+|x|^{N+2s})^2}\frac{1}{|x|^{2s}}\int_{B_R(0)}\frac{1}{||x|^N-|y|^N|}dydx  \\
    = & C \int^{+\infty}_{R}\frac{t^{N-1-2s}}{(1+t^{N+2s})^2}\int^{R}_0\frac{r^{N-1}}{t^N-r^N}drdt  \\
    = & C \int^{+\infty}_{R}\frac{t^{N-1-2s}\log (\frac{t^N}{t^N-R^N})}{(1+t^{N+2s})^2}dt  \\
    = & C' \int^{+\infty}_{1}\frac{(\log r)R^{N-2s}}
    {(r-1)^{2-\frac{2s}{N}}r^{\frac{2s}{N}}\left[1+(\frac{r}{r-1})^{\frac{N+2s}{N}}R^{N+2s}\right]^2}dr \\
    =  & C' \int^{+\infty}_{1}\frac{(\log r) (\frac{r-1}{r})^{\frac{N+6s}{N}}}
    {(r-1)^{\frac{2s}{N}}\left[(r-1)^{1-\frac{2s}{N}}r\right]\left[(\frac{r-1}{r})^{\frac{N+2s}{N}}R^{-\frac{N-2s}{2}}+R^{\frac{N+6s}{2}}\right]^2}dr \\
    \end{split}
    \end{equation*}
    since $\log r\leq (r-1)^{\frac{2s}{N}}$ for all $r\geq 1$ and $N>2s$, we can know the above integral is integrable for each $R>0$ and the integral value is decreasing for large $R>0$. Therefore, we have
    \begin{equation*}
    \begin{split}
    &\int_{\mathbb{R}^N\backslash\Omega_{\varepsilon,\xi}}\frac{1}{(1+|x|^{N+2s})^2}\frac{1}{|x|^{2s}}
    \int_{\Omega_{\varepsilon,\xi}}\frac{1}{||x|^N-|y|^N|}dydx  \\
    \leq & \int_{\mathbb{R}^N\backslash B_{\frac{d(\xi,\partial\Omega)}{\varepsilon}}(0)}\frac{1}{(1+|x|^{N+2s})^2}\frac{1}{|x|^{2s}}
    \int_{B_{\frac{d(\xi,\partial\Omega)}{\varepsilon}}(0)}\frac{1}{||x|^N-|y|^N|}dydx  \\
    \leq & C \left(\frac{\varepsilon}{d(\xi,\partial\Omega)}\right)^{N+6s}
    \int^{+\infty}_{1}\frac{1}{(r-1)^{1-\frac{2s}{N}}r}dr   \\
    \leq & C' \left(\frac{\varepsilon}{d(\xi,\partial\Omega)}\right)^{N+6s}=o\left(\left(\frac{\varepsilon}{d(\xi,\partial\Omega)}\right)^{N+4s}\right).
    \end{split}
    \end{equation*}
    Thus (\ref{txexxw}) holds.
    \end{remark}

\section{{\bfseries The reduction}}\label{secttl}

    In this section, we will calculate $L_{\varepsilon,\xi}$, $Q_{\varepsilon,\xi}$, $R_{\varepsilon}$  and develop the reduction. Here, we take $\varpi$ for $D_{\varepsilon,R}$ in (\ref{edvv}) as
    \begin{equation}\label{defw}
    \max\left\{\frac{N+4s}{6[p(N+2s)-N/2]},\frac{N+4s}{6(N+2s)}\right\}<\varpi<\frac{1}{3},
    \end{equation}
    in which $N<8s$.
    Firstly, we prove the following lemma.

    \begin{lemma}\label{lemel}
    As defined in {\rm (\ref{rl})}, we have $L_{\varepsilon,\xi}$ is a bounded linear operator from $E_{\varepsilon,\xi}$ into $\mathbb{R}$. Moreover,
    for all $\omega\in E_{\varepsilon,\xi}$, we have
    \begin{eqnarray}\label{ellw}
    L_{\varepsilon,\xi}(\omega)=O\left(\varepsilon^{\frac{(N+2s)+\zeta_0(N-2s)}{2}}\right)\|\omega\|_\varepsilon,
    \end{eqnarray}
    for some $\zeta_0\in \left(\frac{1}{3},\frac{2s}{N-2s}\right)$ closes to $\frac{1}{3}$.
    In particular there is $l_{\varepsilon,\xi}\in E_{\varepsilon,\xi}$ such that
    \begin{equation}\label{ellwl}
    L_{\varepsilon,\xi}(\omega)=\langle l_{\varepsilon,\xi}, \omega\rangle_{\varepsilon},\ \ \forall \omega\in E_{\varepsilon,\xi}.
    \end{equation}
    \end{lemma}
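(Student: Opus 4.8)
The plan is to rewrite $L_{\varepsilon,\xi}$ using the equation satisfied by $P_{\varepsilon,\Omega}U_{\varepsilon,\xi}$, to estimate the two resulting integrals with the pointwise bounds of Section~\ref{sectpe}, H\"older's inequality and the embedding estimates (\ref{deflqfh}), and then to apply the Riesz representation theorem. Testing equation (\ref{Pp}) against an arbitrary $\omega\in\mathbb{H}$ and using the definition (\ref{defuv}) together with the bilinear identity recorded in Section~\ref{sectrt} gives $\langle P_{\varepsilon,\Omega}U_{\varepsilon,\xi},\omega\rangle_{\varepsilon}=\int_{\Omega}U^{p}_{\varepsilon,\xi}\omega\,dx$, so that
\[
L_{\varepsilon,\xi}(\omega)=\int_{\Omega}\bigl(U^{p}_{\varepsilon,\xi}-(P_{\varepsilon,\Omega}U_{\varepsilon,\xi})^{p}\bigr)\omega\,dx+\int_{\Omega}\Phi[P^{2}_{\varepsilon,\Omega}U_{\varepsilon,\xi}]\,P_{\varepsilon,\Omega}U_{\varepsilon,\xi}\,\omega\,dx=:I_{1}+I_{2}.
\]
Linearity is then clear, and once $|L_{\varepsilon,\xi}(\omega)|\le C\varepsilon^{\frac{(N+2s)+\zeta_0(N-2s)}{2}}\|\omega\|_{\varepsilon}$ for $\omega\in\mathbb{H}$ is proved, the representation (\ref{ellwl}) follows from Riesz, $E_{\varepsilon,\xi}$ being a closed subspace of the Hilbert space $(\mathbb{H},\langle\cdot,\cdot\rangle_{\varepsilon})$.

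To bound $I_{2}$, note $0\le P_{\varepsilon,\Omega}U_{\varepsilon,\xi}\le U_{\varepsilon,\xi}$ (since $\eta_{\varepsilon,\xi}\ge0$ by (\ref{defetae})), hence $\Phi[P^{2}_{\varepsilon,\Omega}U_{\varepsilon,\xi}]\le\Phi[U^{2}_{\varepsilon,\xi}]$ by the comparison principle; combining Lemma~\ref{lemeguc} and (\ref{bu}) gives the pointwise estimate $\Phi[P^{2}_{\varepsilon,\Omega}U_{\varepsilon,\xi}](x)\,P_{\varepsilon,\Omega}U_{\varepsilon,\xi}(x)\le C\varepsilon^{2s}\bigl(1+|\tfrac{x-\xi}{\varepsilon}|^{N-2s}\bigr)^{-1}\bigl(1+|\tfrac{x-\xi}{\varepsilon}|^{N+2s}\bigr)^{-1}$. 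H\"older with exponents $\bigl(\tfrac{2N}{N+2s},\tfrac{2N}{N-2s}\bigr)$, the change of variables $z=(x-\xi)/\varepsilon$ (the remaining integral is controlled by a convergent integral over $\mathbb{R}^{N}$ times $\varepsilon^{N}$), and $|\omega|_{2N/(N-2s)}\le C\varepsilon^{-s}\|\omega\|_{\varepsilon}$ from (\ref{deflqfh}) then yield $|I_{2}|\le C\varepsilon^{(N+4s)/2}\|\omega\|_{\varepsilon}$, which is $O\bigl(\varepsilon^{\frac{(N+2s)+\zeta_0(N-2s)}{2}}\bigr)\|\omega\|_{\varepsilon}$ as soon as $\zeta_0<\tfrac{2s}{N-2s}$.

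The delicate term is $I_{1}$. Convexity of $t\mapsto t^{p}$ gives $0\le U^{p}_{\varepsilon,\xi}-(P_{\varepsilon,\Omega}U_{\varepsilon,\xi})^{p}\le p\,U^{p-1}_{\varepsilon,\xi}\,\eta_{\varepsilon,\xi}$, and instead of only using the uniform bound $\eta_{\varepsilon,\xi}\le C(\varepsilon/d(\xi,\partial\Omega))^{N+2s}$ of Lemma~\ref{lemupu} I would interpolate it against $\eta_{\varepsilon,\xi}\le U_{\varepsilon,\xi}$: for $\theta\in[0,1]$, $U^{p-1}_{\varepsilon,\xi}\eta_{\varepsilon,\xi}\le C(\varepsilon/d(\xi,\partial\Omega))^{\theta(N+2s)}U^{p-\theta}_{\varepsilon,\xi}$. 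Then, by H\"older in $L^{2}$ and $|\omega|_{2}\le\|\omega\|_{\varepsilon}$, $|I_{1}|\le C(\varepsilon/d(\xi,\partial\Omega))^{\theta(N+2s)}|U^{p-\theta}_{\varepsilon,\xi}|_{2}\|\omega\|_{\varepsilon}$, while after scaling $|U^{p-\theta}_{\varepsilon,\xi}|_{2}^{2}=\varepsilon^{N}\int_{\Omega_{\varepsilon,\xi}}U^{2(p-\theta)}=O(\varepsilon^{N})$ provided $2(p-\theta)(N+2s)>N$, i.e. $\theta<p-\tfrac{N}{2(N+2s)}$ (at the borderline value one still controls the integral over the bounded set $\Omega_{\varepsilon,\xi}$, of diameter $O(1/\varepsilon)$, losing only a $|\log\varepsilon|$). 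Choosing $\theta$ equal to, or slightly below, $\min\{1,\,p-\tfrac{N}{2(N+2s)}\}$ and using $d(\xi,\partial\Omega)\ge\varepsilon^{1-\varpi}$ on $D_{\varepsilon,R}$ (see (\ref{edvv})) gives $|I_{1}|\le C\varepsilon^{\Lambda+N/2}\|\omega\|_{\varepsilon}$ with $\Lambda=\varpi\min\{N+2s,\,p(N+2s)-\tfrac N2\}$. The two-sided choice (\ref{defw}) of $\varpi$ is exactly what forces $\Lambda>\tfrac{N+4s}{6}$ — the relevant term of the maximum being $\tfrac{N+4s}{6(N+2s)}$ when $p\ge1+\tfrac{N}{2(N+2s)}$ and $\tfrac{N+4s}{6[p(N+2s)-N/2]}$ otherwise — so $\Lambda+N/2>\tfrac{2(N+2s)}{3}=\tfrac{(N+2s)+\frac13(N-2s)}{2}$, and taking $\zeta_0$ close enough to $\tfrac13$ gives $|I_{1}|=O\bigl(\varepsilon^{\frac{(N+2s)+\zeta_0(N-2s)}{2}}\bigr)\|\omega\|_{\varepsilon}$. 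Adding the estimates for $I_{1}$ and $I_{2}$ yields (\ref{ellw}) and hence (\ref{ellwl}).

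The main obstacle is the treatment of $I_{1}$: one has to pick the interpolation exponent $\theta$ optimally and split into the two ranges of $p$ around $1+\tfrac{N}{2(N+2s)}$ (which is precisely why (\ref{defw}) is a maximum of two expressions), keep $|U^{p-\theta}_{\varepsilon,\xi}|_{2}$ under control when $2(p-\theta)(N+2s)$ is close to $N$ by integrating over $\Omega_{\varepsilon,\xi}$ rather than over $\mathbb{R}^{N}$, and verify that the window for $\zeta_0$ — which must meet the constraint $\zeta_0<\tfrac{2s}{N-2s}$ from $I_{2}$ as well as the one from $I_{1}$ — is nonempty; the latter uses $N<8s$, which holds since $2s<N\le6s$.
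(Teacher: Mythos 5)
Your proof is correct and follows essentially the same route as the paper's: the same decomposition $L_{\varepsilon,\xi}=I_1+I_2$ obtained by testing (\ref{Pp}) with $\omega$, the same interpolation $\eta_{\varepsilon,\xi}\le \eta_{\varepsilon,\xi}^{\theta}\eta_{\varepsilon,\xi}^{1-\theta}$ with $\theta=\min\{1,\,p-\tfrac{N}{2(N+2s)}\}$ for $I_1$ (the paper's case split around $p=1+\tfrac{N}{2(N+2s)}$ is exactly your two values of the minimum), and the same comparison $\Phi[P^2_{\varepsilon,\Omega}U_{\varepsilon,\xi}]\le\Phi[U^2_{\varepsilon,\xi}]$ plus Lemma \ref{lemeguc} for $I_2$, yielding the identical exponents $\tfrac N2+\Lambda$ and $\tfrac{N+4s}{2}$. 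Only a typo: $\tfrac{2(N+2s)}{3}$ should read $\tfrac{2N+2s}{3}=\tfrac{(N+2s)+\frac13(N-2s)}{2}$, which is the value your own computation $\tfrac{N+4s}{6}+\tfrac N2=\tfrac{4N+4s}{6}$ actually produces, so the conclusion is unaffected.
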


    \begin{proof}
    Recalling the definition of $L_{\varepsilon,\xi}(\omega)$ and using $\omega$ as a test function in (\ref{Pp}), we get
    \begin{equation*}
    \begin{split}
    L_{\varepsilon,\xi}(\omega)= & \langle P_{\varepsilon,\Omega}U_{\varepsilon,\xi}, w \rangle_\varepsilon
    \int_{\Omega}\Phi[P^2_{\varepsilon,\Omega}U_{\varepsilon,\xi}]P_{\varepsilon,\Omega}U_{\varepsilon,\xi} \omega  dx
    - \int_{\Omega}(P_{\varepsilon,\Omega}U_{\varepsilon,\xi})^p\omega dx  \\
    = & \int_{\Omega}((U_{\varepsilon,\xi})^p-(P_{\varepsilon,\Omega}U_{\varepsilon,\xi})^p)\omega dx
    +\int_{\Omega}\Phi[P^2_{\varepsilon,\Omega}U_{\varepsilon,\xi}]P_{\varepsilon,\Omega}U_{\varepsilon,\xi} \omega  dx  \\
    = & L_{\varepsilon,\xi}^1+L_{\varepsilon,\xi}^2.
    \end{split}
    \end{equation*}
    where
    \begin{equation*}
    \begin{split}
    L_{\varepsilon,\xi}^1=\int_{\Omega}((U_{\varepsilon,\xi})^p-(P_{\varepsilon,\Omega}U_{\varepsilon,\xi})^p)\omega dx,\ \
    L_{\varepsilon,\xi}^2=\int_{\Omega}\Phi[P^2_{\varepsilon,\Omega}U_{\varepsilon,\xi}]P_{\varepsilon,\Omega}U_{\varepsilon,\xi} \omega  dx.
    \end{split}
    \end{equation*}
    Then, using Taylor's expansion for $(U_{\varepsilon,\xi})^p$ at $P_{\varepsilon,\Omega}U_{\varepsilon,\xi}$ and from (\ref{upu}), we get
    \begin{equation*}
    \begin{split}
    L_{\varepsilon,\xi}^1 & \leq C\int_{\Omega}(U_{\varepsilon,\xi})^{p-1}(U_{\varepsilon,\xi}-P_{\varepsilon,\Omega}U_{\varepsilon,\xi})\omega dx  \\
    \leq & C'\left(\frac{\varepsilon}{d(\xi,\partial\Omega)}\right)^{N+2s}
    \left(\varepsilon^N \int_{\Omega_{\varepsilon,\xi}}\left(\frac{1}{1+|x|^{N+2s}}\right)^{2(p-1)} dx\right)^{\frac{1}{2}}\left(\int_{\Omega}|\omega|^2 dx\right)^{\frac{1}{2}}  \\
    \leq & C''\varepsilon^{\frac{N}{2}}\left(\frac{\varepsilon}{d(\xi,\partial\Omega)}\right)^{N+2s}\|\omega\|_\varepsilon \left(\int^{\frac{r}{\varepsilon}}_{0}\frac{t^{N-1}}{\left(1+t^{N+2s}\right)^{2(p-1)}} dx\right)^{\frac{1}{2}} \\
    \end{split}
    \end{equation*}
    where $r=\max_{x,y\in\Omega}|x-y|$. By a simple calculation, it holds that
    \begin{eqnarray*}
    \int^{\frac{r}{\varepsilon}}_{0}\frac{t^{N-1}}{\left(1+t^{N+2s}\right)^{2(p-1)}} dx=
    \left\{ \arraycolsep=1.5pt
       \begin{array}{ll}
        O(\varepsilon^{2(p-1)(N+2s)-N}),\ \ &{\rm if}\ \ 1<p<1+\frac{N}{2(N+2s)},\\[3mm]
        O(|\ln\varepsilon|),\ \ &{\rm if}\ \ p=1+\frac{N}{2(N+2s)},\\[3mm]
        O(1),\ \ &{\rm if}\ \  1+\frac{N}{2(N+2s)}<p<\frac{N+2s}{N-2s}.
        \end{array}
    \right.
    \end{eqnarray*}
    Moreover, if $1<p<1+\frac{N}{2(N+2s)}$, taking $q=1+\frac{N}{2(N+2s)}-p$ then
    \begin{equation*}
    \begin{split}
    L_{\varepsilon,\xi}^1 & \leq C|U_{\varepsilon,\xi}-P_{\varepsilon,\Omega}U_{\varepsilon,\xi}|_\infty^{1-q}\int_{\Omega}(U_{\varepsilon,\xi})^{p-1}(U_{\varepsilon,\xi}-P_{\varepsilon,\Omega}U_{\varepsilon,\xi})^q\omega dx  \\
    \leq & C'\left(\frac{\varepsilon}{d(\xi,\partial\Omega)}\right)^{(1-q)(N+2s)}
    \left(\varepsilon^N \int_{\Omega_{\varepsilon,\xi}}\left(\frac{1}{1+|x|^{N+2s}}\right)^{2(p-1)+2q} dx\right)^{\frac{1}{2}}\left(\int_{\Omega}|\omega|^2 dx\right)^{\frac{1}{2}}  \\
    \leq & C''\varepsilon^{\frac{N}{2}}\left(\frac{\varepsilon}{d(\xi,\partial\Omega)}\right)^{(1-q)(N+2s)}\|\omega\|_\varepsilon \left(\int^{\frac{r}{\varepsilon}}_{0}\frac{t^{N-1}}{1+t^{N}} dx\right)^{\frac{1}{2}} \\
    \leq & C'''\varepsilon^{\frac{N}{2}}\left(\frac{\varepsilon}{d(\xi,\partial\Omega)}\right)^{p(N+2s)-N/2}|\ln\varepsilon|^{\frac{1}{2}}
    \|\omega\|_\varepsilon,
    \end{split}
    \end{equation*}
    then we obtain
    \begin{eqnarray}\label{l1b12}
    L_{\varepsilon,\xi}^1=
    \left\{ \arraycolsep=1.5pt
       \begin{array}{ll}
        O\left(\varepsilon^{\frac{N}{2}+\varpi[p(N+2s)-N/2]}|\ln\varepsilon|^{\frac{1}{2}}\right)\|\omega\|_\varepsilon,\ \ &{\rm if}\ \ 1<p<1+\frac{N}{2(N+2s)};\\[3mm]
        O\left(\varepsilon^{\frac{N}{2}+\varpi(N+2s)}|\ln\varepsilon|^{\frac{1}{2}}\right)\|\omega\|_\varepsilon,\ \ &{\rm if}\ \ p=1+\frac{N}{2(N+2s)};\\[3mm]
        O\left(\varepsilon^{\frac{N}{2}+\varpi(N+2s)}\right)\|\omega\|_\varepsilon,\ \ &{\rm if}\ \  1+\frac{N}{2(N+2s)}<p<\frac{N+2s}{N-2s}.
        \end{array}
    \right.
    \end{eqnarray}
    From (\ref{defw}), we deduce that
    \begin{equation*}
    \frac{N}{2}+\varpi[p(N+2s)-N/2]>\frac{2N+2s}{3}\ \ \mbox{and}\ \ \frac{N}{2}+\varpi(N+2s)>\frac{2N+2s}{3},
    \end{equation*}
    then we can choose $\zeta_0\in(\frac{1}{3},1)$ closes to $\frac{1}{3}$ such that
    \begin{equation}\label{l1bf}
    \begin{split}
    L_{\varepsilon,\xi}^1=O\left(\varepsilon^{\frac{(N+2s)+\zeta_0(N-2s)}{2}}\right)\|\omega\|_\varepsilon.
    \end{split}
    \end{equation}

    Then, we deal with $L_{\varepsilon,\xi}^2$. From (\ref{defetae}), we can know that $P_{\varepsilon,\Omega}U_{\varepsilon,\xi}\leq U_{\varepsilon,\xi}$, and then by the comparison theorem, we get $\Phi[P^2_{\varepsilon,\Omega}U_{\varepsilon,\xi}]\leq \Phi[U^2_{\varepsilon,\xi}]$, thus
    \begin{equation*}
    \begin{split}
    L_{\varepsilon,\xi}^2\leq \int_{\Omega}\Phi[U^2_{\varepsilon,\xi}]U_{\varepsilon,\xi} |\omega|  dx.
    \end{split}
    \end{equation*}
    As a result, from Lemma \ref{lemeguc} and by H\"{o}lder's inequality, we have
    \begin{equation*}
    \begin{split}
    L_{\varepsilon,\xi}^2\leq & C\left(\int_{\Omega}\left(\Phi[U^2_{\varepsilon,\xi}]U_{\varepsilon,\xi}\right)^{2} dx\right)^{\frac{1}{2}}\left(\int_{\Omega}|\omega|^2 dx\right)^{\frac{1}{2}} \\
    \leq & C'\varepsilon^{2s}\left(\varepsilon^N \int_{\Omega_{\varepsilon,\xi}}\frac{1}{1+|x|^{4N}}dx\right)^{\frac{1}{2}}\left(\int_{\Omega}|\omega|^2 dx\right)^{\frac{1}{2}}  \\
    = & O\left(\varepsilon^{\frac{N}{2}+2s}\right)\|\omega\|_\varepsilon
    \end{split}
    \end{equation*}
    Since
    \begin{equation*}
    \begin{split}
    \frac{N}{2}+2s> & \frac{2N+2s}{3}\Leftrightarrow\ N<8s,
    \end{split}
    \end{equation*}
    then we have
    \begin{equation}\label{l2d}
    \begin{split}
    L_{\varepsilon,\xi}^2=& O\left(\varepsilon^{\frac{(N+2s)+\zeta_0(N-2s)}{2}}\right)\|\omega\|_\varepsilon,
    \end{split}
    \end{equation}
    for any $\zeta_0\in \left(\frac{1}{3},\frac{2s}{N-2s}\right)$, if we take $2s<N<8s$.
    Then the result follows by (\ref{l1bf}), (\ref{l2d}).
    \end{proof}

    \begin{lemma}\label{lemqb}
    As the definition of $Q_{\varepsilon,\xi}$ in {\rm (\ref{rq})}, there exists a positive constant $C$, independent of $\varepsilon$, such that
    \begin{equation}\label{qbw}
    Q_{\varepsilon,\xi}(\omega,\eta)\leq C \|\omega\|_{\varepsilon}\|\eta\|_{\varepsilon}.
    \end{equation}
    In particular there is $\mathcal{A}_{\varepsilon,\xi}\in E_{\varepsilon,\xi}$ such that
    \begin{equation}\label{qba}
    Q_{\varepsilon,\xi}(\omega,\eta)=\langle \mathcal{A}_{\varepsilon,\xi}\omega,\eta \rangle_{\varepsilon},\ \ \forall \omega,\ \eta\in E_{\varepsilon,\xi}.
    \end{equation}
    \end{lemma}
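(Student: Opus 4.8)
The plan is to estimate each of the four terms in the definition (\ref{rq}) of $Q_{\varepsilon,\xi}$ separately by a constant (independent of $\varepsilon$) times $\|\omega\|_\varepsilon\|\eta\|_\varepsilon$, and then to invoke the Riesz representation theorem on the closed subspace $E_{\varepsilon,\xi}\subset\mathbb{H}$. The term $\langle\omega,\eta\rangle_\varepsilon$ is controlled by Cauchy--Schwarz. For $p\int_\Omega(P_{\varepsilon,\Omega}U_{\varepsilon,\xi})^{p-1}\omega\eta\,dx$ I would use that $0\le P_{\varepsilon,\Omega}U_{\varepsilon,\xi}\le U_{\varepsilon,\xi}\le\max_{\mathbb{R}^N}U$ (by (\ref{upu}) and the boundedness of $U$ from Theorem~A), so the weight $(P_{\varepsilon,\Omega}U_{\varepsilon,\xi})^{p-1}$ is uniformly bounded, whence $\int_\Omega|\omega\eta|\le|\omega|_2|\eta|_2\le\|\omega\|_\varepsilon\|\eta\|_\varepsilon$ by (\ref{defnuv}). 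For $\int_\Omega\Phi[P^2_{\varepsilon,\Omega}U_{\varepsilon,\xi}]\omega\eta\,dx$, combining the comparison inequality $\Phi[P^2_{\varepsilon,\Omega}U_{\varepsilon,\xi}]\le\Phi[U^2_{\varepsilon,\xi}]$ with the pointwise bound (\ref{estgru1}) gives $\Phi[P^2_{\varepsilon,\Omega}U_{\varepsilon,\xi}]\le C\varepsilon^{2s}$, and Cauchy--Schwarz in $L^2$ again yields the bound $C\varepsilon^{2s}\|\omega\|_\varepsilon\|\eta\|_\varepsilon$.

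The only genuine difficulty is the cubic nonlocal term $2\int_\Omega\Phi[\omega P_{\varepsilon,\Omega}U_{\varepsilon,\xi}]P_{\varepsilon,\Omega}U_{\varepsilon,\xi}\eta\,dx$, since $\omega$ is not assumed to have one sign. I would first pass to absolute values via $|\Phi[\omega P_{\varepsilon,\Omega}U_{\varepsilon,\xi}]|\le\Phi[|\omega|P_{\varepsilon,\Omega}U_{\varepsilon,\xi}]\le W[|\omega|P_{\varepsilon,\Omega}U_{\varepsilon,\xi}]$, using the maximum principle and (\ref{ctpw}), so that the term is bounded by $C\int_\Omega W[|\omega|P_{\varepsilon,\Omega}U_{\varepsilon,\xi}]\,P_{\varepsilon,\Omega}U_{\varepsilon,\xi}|\eta|\,dx$. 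Recalling that $W[\cdot]$ is the $s$-Riesz potential (\ref{defprf2}), I would apply the Hardy--Littlewood--Sobolev inequality (Proposition \ref{prohlsi}) with $\mu=N-2s$ and $r=t=\frac{2N}{N+2s}$, reducing the task to bounding $\left|\,|\omega|P_{\varepsilon,\Omega}U_{\varepsilon,\xi}\right|_{\frac{2N}{N+2s}}$ and $\left|\,P_{\varepsilon,\Omega}U_{\varepsilon,\xi}|\eta|\,\right|_{\frac{2N}{N+2s}}$. By Hölder with the conjugate pair $\left(\frac{2N}{N-2s},\frac{N}{2s}\right)$ and $P_{\varepsilon,\Omega}U_{\varepsilon,\xi}\le U_{\varepsilon,\xi}$, the first of these is at most $|\omega|_{\frac{2N}{N-2s}}\,|U_{\varepsilon,\xi}|_{\frac{N}{2s}}$; the scaling identity $|U_{\varepsilon,\xi}|_{\frac{N}{2s}}=\varepsilon^{2s}|U|_{\frac{N}{2s}}$ (with $|U|_{N/2s}$ finite by (\ref{bu})) together with (\ref{deflqfh}) for $q=\frac{2N}{N-2s}$, which gives $|\omega|_{\frac{2N}{N-2s}}\le C\varepsilon^{-s}\|\omega\|_\varepsilon$, make this quantity $\le C\varepsilon^{s}\|\omega\|_\varepsilon$, and symmetrically the second is $\le C\varepsilon^{s}\|\eta\|_\varepsilon$. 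Hence the nonlocal term is $O(\varepsilon^{2s})\|\omega\|_\varepsilon\|\eta\|_\varepsilon$.

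Adding the four estimates gives (\ref{qbw}) with a constant $C$ independent of $\varepsilon$ (in fact all but the first term carry positive powers of $\varepsilon$). Finally, since $E_{\varepsilon,\xi}$ is the intersection of the kernels of finitely many $\langle\cdot,\cdot\rangle_\varepsilon$-continuous functionals it is a closed subspace of $\mathbb{H}$, hence a Hilbert space for $\langle\cdot,\cdot\rangle_\varepsilon$; for each fixed $\omega\in E_{\varepsilon,\xi}$ the map $\eta\mapsto Q_{\varepsilon,\xi}(\omega,\eta)$ is a bounded linear functional on $E_{\varepsilon,\xi}$ by (\ref{qbw}), so the Riesz representation theorem yields a unique $\mathcal{A}_{\varepsilon,\xi}\omega\in E_{\varepsilon,\xi}$ with $Q_{\varepsilon,\xi}(\omega,\eta)=\langle\mathcal{A}_{\varepsilon,\xi}\omega,\eta\rangle_\varepsilon$; bilinearity of $Q_{\varepsilon,\xi}$ and (\ref{qbw}) make $\omega\mapsto\mathcal{A}_{\varepsilon,\xi}\omega$ a bounded linear operator, which is (\ref{qba}). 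The main obstacle throughout is precisely the control of the cubic nonlocal term, where one must exploit the Riesz-potential representation and the critical Sobolev/HLS exponents rather than crude $L^\infty$ bounds.
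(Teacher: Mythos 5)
Your proof is correct. The first three terms are handled exactly as in the paper: Cauchy--Schwarz for $\langle\omega,\eta\rangle_\varepsilon$, the uniform bound on $(P_{\varepsilon,\Omega}U_{\varepsilon,\xi})^{p-1}$ for the second term, and the comparison $\Phi[P^2_{\varepsilon,\Omega}U_{\varepsilon,\xi}]\le\Phi[U^2_{\varepsilon,\xi}]$ together with Lemma \ref{lemeguc} for the third. Where you genuinely diverge is the cubic nonlocal term. The paper does not invoke the Riesz potential or HLS there: it tests the equation $(-\Delta)^s\Phi[\omega P_{\varepsilon,\Omega}U_{\varepsilon,\xi}]=\omega P_{\varepsilon,\Omega}U_{\varepsilon,\xi}$ against its own solution and combines Cauchy--Schwarz with the Poincar\'e-type inequality $|\Phi|_{2}\le C\,|(-\Delta)^{s/2}\Phi|_{2}$ on the bounded domain to obtain the energy estimate $\int_\Omega|(-\Delta)^{\frac{s}{2}}\Phi[\omega P_{\varepsilon,\Omega}U_{\varepsilon,\xi}]|^2\le\int_\Omega(\omega P_{\varepsilon,\Omega}U_{\varepsilon,\xi})^2$ (their (\ref{eqwpwe})); since $P_{\varepsilon,\Omega}U_{\varepsilon,\xi}\le\max U$, one more Cauchy--Schwarz closes the estimate with an $O(1)$ constant. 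Your route --- $|\Phi[\omega P_{\varepsilon,\Omega}U_{\varepsilon,\xi}]|\le W[|\omega|P_{\varepsilon,\Omega}U_{\varepsilon,\xi}]$, then HLS with $r=t=\frac{2N}{N+2s}$, H\"older with the pair $\bigl(\frac{2N}{N-2s},\frac{N}{2s}\bigr)$, and the scalings $|U_{\varepsilon,\xi}|_{N/2s}=\varepsilon^{2s}|U|_{N/2s}$ and $|\omega|_{2N/(N-2s)}\le C\varepsilon^{-s}\|\omega\|_\varepsilon$ --- is also valid (the exponents balance, and $|U|_{N/2s}<\infty$ by (\ref{bu})), and it is in fact precisely the computation the authors carry out later in (\ref{yyhls}) inside the proof of Lemma \ref{lemeqw}. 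Your approach buys the sharper bound $O(\varepsilon^{2s})\|\omega\|_\varepsilon\|\eta\|_\varepsilon$, which is not needed for (\ref{qbw}) itself but is what the coercivity argument of Lemma \ref{lemeqw} ultimately relies on; the paper's energy argument is more elementary and bypasses HLS entirely. Your Riesz-representation step just spells out what the paper calls a direct consequence.
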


    \begin{proof}
    Recalling the definition of $Q_{\varepsilon,\xi}(\omega,\eta)$ that
    \begin{equation*}
    \begin{split}
    Q_{\varepsilon,\xi}(\omega,\eta)
    = & \langle \omega,\eta \rangle_\varepsilon
    - p\int_{\Omega}(P_{\varepsilon,\Omega}U_{\varepsilon,\xi})^{p-1}\omega\eta dx
    +\int_{\Omega}\Phi[P^2_{\varepsilon,\Omega}U_{\varepsilon,\xi}]\omega\eta  dx  \\
    & +2\int_{\Omega}\Phi[\omega P_{\varepsilon,\Omega}U_{\varepsilon,\xi}]
    P_{\varepsilon,\Omega}U_{\varepsilon,\xi}(x)\eta(x)  dx.
    \end{split}
    \end{equation*}
    Obviously, $\langle \omega,\eta \rangle_\varepsilon\leq \|\omega\|_{\varepsilon}\|\eta\|_{\varepsilon}$. On the one hand,
    \begin{equation*}
    \begin{split}
    \left|\int_{\Omega}(P_{\varepsilon,\Omega}U_{\varepsilon,\xi})^{p-1}\omega\eta dx \right|\leq C'\left[\int_{\Omega}\omega^2 dx\right]^{1/2}\left[\int_{\Omega}\eta^2 dx\right]^{1/2}\leq C''\|\omega\|_{\varepsilon}\|\eta\|_{\varepsilon},
    \end{split}
    \end{equation*}
    and from Lemma \ref{lemeguc}, we have
    \begin{equation*}
    \begin{split}
    \left|\int_{\Omega}\Phi[P^2_{\varepsilon,\Omega}U_{\varepsilon,\xi}]\omega\eta  dx \right|
    \leq & \int_{\Omega}\Phi[U^2_{\varepsilon,\xi}]|\omega||\eta|  dx
    \leq C\int_{\Omega}|\omega||\eta|dx  \\
    \leq & C'\left[\int_{\Omega}\omega^2 dx\right]^{1/2}\left[\int_{\Omega}\eta^2 dx\right]^{1/2}\leq C''\|\omega\|_{\varepsilon}\|\eta\|_{\varepsilon}.
    \end{split}
    \end{equation*}
    On the other hand, since $\Phi[\omega P_{\varepsilon,\Omega}U_{\varepsilon,\xi}]$ satisfies
    \begin{equation*}
    \begin{split}
    (-\Delta)^s\Phi[\omega P_{\varepsilon,\Omega}U_{\varepsilon,\xi}]=\omega P_{\varepsilon,\Omega}U_{\varepsilon,\xi}\quad \mbox{in}\quad \Omega,
    \quad \Phi[\omega P_{\varepsilon,\Omega}U_{\varepsilon,\xi}]=0\quad \mbox{in}\quad \mathbb{R}^N\backslash\Omega,
    \end{split}
    \end{equation*}
    then we have
    \begin{equation*}
    \begin{split}
    & \int_{\Omega}\Phi[\omega P_{\varepsilon,\Omega}U_{\varepsilon,\xi}](-\Delta)^s\Phi[\omega P_{\varepsilon,\Omega}U_{\varepsilon,\xi}]dx \\
    = & \int_{\Omega}\omega P_{\varepsilon,\Omega}U_{\varepsilon,\xi}\Phi[\omega P_{\varepsilon,\Omega}U_{\varepsilon,\xi}]dx \\
    \leq & \left(\int_{\Omega}(\omega P_{\varepsilon,\Omega}U_{\varepsilon,\xi})^2dx\right)^{1/2}
    \left(\int_{\Omega}(\Phi[\omega P_{\varepsilon,\Omega}U_{\varepsilon,\xi}])^2dx\right)^{1/2}  \\
    \leq & \left(\int_{\Omega}(\omega P_{\varepsilon,\Omega}U_{\varepsilon,\xi})^2dx\right)^{1/2}
    \left(\int_{\Omega}|(-\Delta)^{\frac{s}{2}}\Phi[\omega P_{\varepsilon,\Omega}U_{\varepsilon,\xi}]|^2dx\right)^{1/2},
    \end{split}
    \end{equation*}
    then
    \begin{equation}\label{eqwpwe}
    \begin{split}
    \int_{\Omega}|(-\Delta)^{\frac{s}{2}}\Phi[\omega P_{\varepsilon,\Omega}U_{\varepsilon,\xi}]|^2dx
    \leq \int_{\Omega}(\omega P_{\varepsilon,\Omega}U_{\varepsilon,\xi})^2dx.
    \end{split}
    \end{equation}
    By H\"{o}lder's inequality and (\ref{eqwpwe}), we obtain
    \begin{equation}\label{qbgx}
    \begin{split}
    \left|\int_{\Omega}\Phi[\omega P_{\varepsilon,\Omega}U_{\varepsilon,\xi}]
    P_{\varepsilon,\Omega}U_{\varepsilon,\xi}(x)\eta(x)  dx\right|
    \leq & \left[\int_{\Omega}(\Phi[\omega P_{\varepsilon,\Omega}U_{\varepsilon,\xi}])^2 dx\right]^{\frac{1}{2}}\left[\int_{\Omega}
    (\eta P_{\varepsilon,\Omega}U_{\varepsilon,\xi} )^2 dx\right]^{\frac{1}{2}}\\
    \leq & C'\left[\int_{\Omega}|(-\Delta)^{\frac{s}{2}}\Phi[\omega P_{\varepsilon,\Omega}U_{\varepsilon,\xi}]|^2 dx\right]^{\frac{1}{2}}\|\eta\|_{\varepsilon}\\
    \leq & C''\left[\int_{\Omega}(\omega P_{\varepsilon,\Omega}U_{\varepsilon,\xi})^2dx\right]^{\frac{1}{2}}\|\eta\|_{\varepsilon}  \\
    \leq & C'''\|\omega\|_{\varepsilon}\|\eta\|_{\varepsilon} .
    \end{split}
    \end{equation}
    Hence (\ref{qbw}) holds, and (\ref{qba}) is a direct result.
    \end{proof}

    We prove now the coercivity of the operator $\mathcal{A}_{\varepsilon,\xi}$ in the space $E_{\varepsilon,\xi}$. More precisely we have:

    \begin{lemma}\label{lemeqw}
    There are $\varepsilon_0>0$, $\tau>0$ and $R_0>0$, such that for $\varepsilon\in (0,\varepsilon_0]$, $\xi\in D_{\varepsilon,R}$ with $R\geq R_0$, we have
    \begin{equation}\label{eqwb}
    \|\mathcal{A}_{\varepsilon,\xi}\omega\|_{\varepsilon}\geq \tau\|\omega\|_{\varepsilon},\ \ \forall \omega\in E_{\varepsilon,\xi}.
    \end{equation}
    \end{lemma}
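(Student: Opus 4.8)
The plan is to argue by contradiction and reduce to a limiting problem on $\mathbb{R}^N$. Suppose (\ref{eqwb}) fails; then there exist $\varepsilon_n\to 0$, $R_n\to+\infty$, points $\xi_n\in D_{\varepsilon_n,R_n}$ and $\omega_n\in E_{\varepsilon_n,\xi_n}$ with $\|\omega_n\|_{\varepsilon_n}=1$ and $\|\mathcal{A}_{\varepsilon_n,\xi_n}\omega_n\|_{\varepsilon_n}\to 0$, i.e. $Q_{\varepsilon_n,\xi_n}(\omega_n,\eta)=o(1)\,\|\eta\|_{\varepsilon_n}$ for every $\eta\in E_{\varepsilon_n,\xi_n}$. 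Since $\xi_n\in D_{\varepsilon_n,R_n}$ (see (\ref{edvv})) we have $d(\xi_n,\partial\Omega)/\varepsilon_n\to+\infty$, so the domains $\Omega_{\varepsilon_n,\xi_n}=\{x:\varepsilon_n x+\xi_n\in\Omega\}$ exhaust $\mathbb{R}^N$. I would introduce the amplitude‑preserving rescaling $\hat\omega_n(y):=\varepsilon_n^{N/2}\omega_n(\varepsilon_n y+\xi_n)$, which by (\ref{defnuv})--(\ref{deflqfh}) is an isometry, so $\hat\omega_n\in H^s_0(\Omega_{\varepsilon_n,\xi_n})$ with $\|\hat\omega_n\|_{H^s(\mathbb{R}^N)}=1$; passing to a subsequence, $\hat\omega_n\rightharpoonup\hat\omega$ weakly in $H^s(\mathbb{R}^N)$ and strongly in $L^2_{\mathrm{loc}}(\mathbb{R}^N)$. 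By Lemma \ref{lemupu} the rescaled functions $\widehat P_n(y):=P_{\varepsilon_n,\Omega}U_{\varepsilon_n,\xi_n}(\varepsilon_n y+\xi_n)$ satisfy $0\le\widehat P_n\le U$ and $\widehat P_n\to U$ locally uniformly.

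The first key point is that the two nonlocal terms in $Q_{\varepsilon,\xi}$ (see (\ref{rq})) carry an extra factor $\varepsilon^{2s}$ and hence disappear in the limit. Indeed $\Phi[P^2_{\varepsilon,\Omega}U_{\varepsilon,\xi}]\le\Phi[U^2_{\varepsilon,\xi}]\le C\varepsilon^{2s}$ by Lemma \ref{lemeguc}, so $|\int_\Omega\Phi[P^2_{\varepsilon,\Omega}U_{\varepsilon,\xi}]\omega\eta|\le C\varepsilon^{2s}\|\omega\|_\varepsilon\|\eta\|_\varepsilon$; and since $\Phi[\omega P_{\varepsilon,\Omega}U_{\varepsilon,\xi}]$ solves a $(-\Delta)^s$‑equation with right‑hand side $\omega P_{\varepsilon,\Omega}U_{\varepsilon,\xi}$, rescaling that equation produces a factor $\varepsilon^{2s}$ which, combined with (\ref{eqwpwe}) as in Lemma \ref{lemqb}, gives $|\int_\Omega\Phi[\omega P_{\varepsilon,\Omega}U_{\varepsilon,\xi}]P_{\varepsilon,\Omega}U_{\varepsilon,\xi}\eta|\le C\varepsilon^{2s}\|\omega\|_\varepsilon\|\eta\|_\varepsilon$. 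Consequently, for any $\eta_n\in E_{\varepsilon_n,\xi_n}$ of bounded norm, $Q_{\varepsilon_n,\xi_n}(\omega_n,\eta_n)=\langle\hat\omega_n,\hat\eta_n\rangle_{H^s}-p\int_{\mathbb{R}^N}\widehat P_n^{\,p-1}\hat\omega_n\hat\eta_n+o(1)$, where $\hat\eta_n$ is the rescaling of $\eta_n$ and $\langle\cdot,\cdot\rangle_{H^s}$ is the standard $H^s(\mathbb{R}^N)$ inner product. Taking $\eta_n=\omega_n$ gives $p\int_{\mathbb{R}^N}\widehat P_n^{\,p-1}\hat\omega_n^2\to 1$; since $U^{p-1}\in L^\infty$ decays to $0$ and $\widehat P_n\le U$, splitting on balls (Rellich on $B_R$, smallness of $U^{p-1}$ off $B_R$) forces $\hat\omega\neq 0$.

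Next I would identify the limiting equation. Since $\omega_n\in E_{\varepsilon_n,\xi_n}$ (see (\ref{defe})), the orthogonality $\langle\omega_n,\partial_{\xi_i}P_{\varepsilon_n,\Omega}U_{\varepsilon_n,\xi_n}\rangle_{\varepsilon_n}=0$ rescales — using $\partial_{\xi_i}U_{\varepsilon,\xi}(\varepsilon y+\xi)=-\varepsilon^{-1}\partial_{x_i}U(y)$ together with suitable smallness of $\partial_{\xi_i}\eta_{\varepsilon,\xi}$ (obtained by differentiating (\ref{defetae}) and using the maximum principle) — to $\langle\hat\omega,\partial_{x_i}U\rangle_{H^s}=0$, so $\hat\omega\in V:=\{v\in H^s(\mathbb{R}^N):\langle v,\partial_{x_i}U\rangle_{H^s}=0,\ i=1,\dots,N\}$. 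Testing $Q_{\varepsilon_n,\xi_n}(\omega_n,\cdot)=o(1)$ against $\eta_n\in E_{\varepsilon_n,\xi_n}$ obtained by unscaling a fixed $\hat\eta\in C_c^\infty(\mathbb{R}^N)$ and projecting onto $E_{\varepsilon_n,\xi_n}$ (a projection converging to the $H^s$‑orthogonal projection onto $V$), and letting $n\to\infty$, one obtains $\langle\hat\omega,\hat\eta\rangle_{H^s}-p\int_{\mathbb{R}^N}U^{p-1}\hat\omega\hat\eta=0$ for all $\hat\eta\in V$; equivalently $\langle L\hat\omega,\hat\eta\rangle_{L^2(\mathbb{R}^N)}=0$ for all $\hat\eta\in V$, where $L=(-\Delta)^s+1-pU^{p-1}$ is the operator of Theorem A.

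Finally, I would reach a contradiction from the non‑degeneracy (\ref{und}). Because $V$ is the $H^s$‑orthogonal complement of the finite‑dimensional space $\ker L=\mathrm{Span}\{\partial_{x_1}U,\dots,\partial_{x_N}U\}$, the relation $\langle L\hat\omega,\hat\eta\rangle_{L^2}=0$ for all $\hat\eta\in V$ forces $(1+(-\Delta)^s)^{-1}L\hat\omega\in\ker L$, that is $L\hat\omega=(1+(-\Delta)^s)h$ for some $h\in\ker L$, and since $(1+(-\Delta)^s)\partial_{x_i}U=pU^{p-1}\partial_{x_i}U$ (differentiate (\ref{Pl})) this reads $L\hat\omega=pU^{p-1}h$. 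Pairing with $h$ and using that $L$ is self‑adjoint with $Lh=0$ gives $p\int_{\mathbb{R}^N}U^{p-1}h^2=\langle L\hat\omega,h\rangle_{L^2}=\langle\hat\omega,Lh\rangle_{L^2}=0$, hence $h\equiv 0$ because $U>0$; therefore $L\hat\omega=0$, so $\hat\omega\in\ker L\cap V=\{0\}$, contradicting $\|\hat\omega\|_{H^s}=1$. Retracing, (\ref{eqwb}) holds for suitable $\varepsilon_0,\tau,R_0$. I expect the main difficulty to be the bookkeeping of the several rescalings for the nonlocal quantities — establishing cleanly that both Coulomb contributions gain the factor $\varepsilon^{2s}$ (which is exactly where Lemma \ref{lemeguc} and (\ref{eqwpwe}) enter) and controlling the rescaled approximate‑kernel directions $\partial_{\xi_i}P_{\varepsilon,\Omega}U_{\varepsilon,\xi}$ precisely enough that the limiting orthogonality is genuinely $H^s$‑orthogonality to $\ker L$; the nonlocality of $L$ is then absorbed by (\ref{und}) as above.
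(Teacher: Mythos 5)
Your proposal is correct and follows essentially the same route as the paper: a contradiction/blow-up argument in which the two Coulomb terms of $Q_{\varepsilon,\xi}$ are discarded as $O(\varepsilon^{2s})$ via Lemma \ref{lemeguc} and (\ref{eqwpwe}), the rescaled sequence converges weakly to an element of $\ker L$ by the non-degeneracy (\ref{und}), the orthogonality constraints force that limit to vanish, and the decay of $U^{p-1}$ plus local compactness prevents the potential term from carrying the full norm. The only difference is cosmetic ordering — you first show $\hat\omega\neq 0$ from $p\int \widehat P_n^{p-1}\hat\omega_n^2\to 1$ and then contradict it, whereas the paper first shows the weak limit is zero and then contradicts the coercivity identity — and your handling of the test-function projection onto $E_{\varepsilon,\xi}$ is, if anything, slightly more explicit than the paper's.
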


    \begin{proof}
    We follow the same idea as in \cite{dy1}. We argue by contradiction. Suppose that there are $\varepsilon_j\rightarrow 0$, $\xi^j\in \Omega$ with $\frac{d(\xi^j,\partial\Omega)}{\varepsilon_j}\rightarrow +\infty$, $\omega_j\in E_{\varepsilon_j,\xi^j}$ such that
    \begin{equation}\label{eqwaw}
    \|\omega_j\|_{\varepsilon_j}=\varepsilon^{\frac{N}{2}}_j\ \ \mbox{and}\ \ \|\mathcal{A}_{\varepsilon_j,\xi^j}\omega_j\|_{\varepsilon_j}=o(1)\|\omega_j\|_{\varepsilon_j}.
    \end{equation}
    We claim that for any $R>0$ fixed, $i=1,2,\ldots,N$,
    \begin{equation}\label{eqwwj}
    \int_{_{B_{\varepsilon_j R}(\xi^j)}}\omega^2_j dx=\varepsilon^{N}_j o(1).
    \end{equation}
    Let
    \begin{equation*}
    \widetilde{\omega}_j(x)=\omega_j(\varepsilon_j x+\xi^j),\ \ Z_{ij}(x)=\frac{\partial P_{\varepsilon_j,\Omega}U_{\varepsilon_j,\xi^j}}{\partial \xi^j_i}(\varepsilon_j x +\xi^j),
    \end{equation*}
    then it follows from (\ref{eqwaw}) that
    \begin{equation}\label{eqwwx}
    \begin{split}
    o(1)\|\psi\|_{\varepsilon_j}
    = & \int_{\Omega_{\varepsilon_j,\xi^j}}\left(\psi(-\Delta)^s \widetilde{\omega}_j+  \widetilde{\omega}_j \psi\right)
    -p\int_{\Omega_{\varepsilon_j,\xi^j}}(P_{\varepsilon_j,\Omega_{\varepsilon_j,\xi^j}}U)^{p-1}\widetilde{\omega}_j\psi\\
    & +\int_{\Omega_{\varepsilon_j,\xi^j}}\Phi[P^2_{\varepsilon_j,\Omega_{\varepsilon_j,\xi^j}}U]\widetilde{\omega}_j\psi +2\int_{\Omega_{\varepsilon_j,\xi^j}}\Phi[\widetilde{\omega}_j P_{\varepsilon_j,\Omega_{\varepsilon_j,\xi^j}}U]
    P_{\varepsilon_j,\Omega_{\varepsilon_j,\xi^j}}U\psi
    \end{split}
    \end{equation}
    for any
    \begin{equation*}
    \psi\in \widetilde{E}_j=\left\{\psi\in H^s_0(\Omega_{\varepsilon_j,\xi^j}): \int_{\Omega_{\varepsilon_j,\xi^j}}\left(\psi(-\Delta)^s Z_{ij}+  Z_{ij} \psi\right)=0,\ i=1,\ldots,N\right\},
    \end{equation*}
    where $P_{\varepsilon_j,\Omega_{\varepsilon_j,\xi^j}}U$ satisfies
    \begin{equation*}
    (-\Delta)^s P_{\varepsilon_j,\Omega_{\varepsilon_j,\xi^j}}U + P_{\varepsilon_j,\Omega_{\varepsilon_j,\xi^j}}U=U^p\quad \mbox{in}\quad \Omega_{\varepsilon_j,\xi^j}.
    \end{equation*}
    Since $\widetilde{\omega}_j$ is bounded in $H^s(\mathbb{R}^N)$, we may assume that there is an $\widetilde{\omega}\in H^s(\mathbb{R}^N)$ such that
    \begin{equation*}
    \widetilde{\omega}_j\rightharpoonup \widetilde{\omega},\ \ \mbox{weakly in}\ \ H^s(\mathbb{R}^N).
    \end{equation*}
    By Proposition \ref{prohlsi} the $(HLS)$ inequality, we obtain that for any $\omega, \eta\in \mathbb{H}$,
    \begin{equation}\label{yyhls}
    \begin{split}
    \int_{\Omega}\Phi[\omega P_{\varepsilon,\Omega}U_{\varepsilon,\xi}]P_{\varepsilon,\Omega}U_{\varepsilon,\xi}\eta  dx
    \leq & c_{N,s}\int_{\Omega}\int_{\Omega}\frac{\omega (y) P_{\varepsilon,\Omega}U_{\varepsilon,\xi}(y)}{|x-y|^{N-2s}}dyP_{\varepsilon,\Omega}U_{\varepsilon,\xi}(x)\eta(x)  dx   \\
    \leq & C\left(\int_{\Omega}(\omega P_{\varepsilon,\Omega}U_{\varepsilon,\xi})^{\frac{2N}{N+2s}}\right)^{\frac{N+2s}{2N}}
    \left(\int_{\Omega}(\eta P_{\varepsilon,\Omega}U_{\varepsilon,\xi})^{\frac{2N}{N+2s}}\right)^{\frac{N+2s}{2N}} \\
    \leq & C\left(\int_{\Omega}|\omega|^2\right)^{\frac{1}{2}}\left(\int_{\Omega}|\eta|^2\right)^{\frac{1}{2}}
    \left(\int_{\Omega}(P_{\varepsilon,\Omega}U_{\varepsilon,\xi})^{\frac{N}{s}}\right)^{\frac{2s}{N}} \\
    \leq & C\varepsilon^{2s}\|\omega\|_\varepsilon\|\eta\|_\varepsilon,
    \end{split}
    \end{equation}
    and by Lemma \ref{lemeguc}, it holds that
    \begin{equation*}
    \begin{split}
    \int_{\Omega}\Phi[P^2_{\varepsilon,\Omega}U_{\varepsilon,\xi}]\omega \eta  dx
    \leq & C\varepsilon^{2s}\|\omega\|_\varepsilon\|\eta\|_\varepsilon.
    \end{split}
    \end{equation*}

    Let $j\rightarrow\infty$ in (\ref{eqwwx}), then we obtain that $\widetilde{\omega}$ satisfies
    \begin{equation*}
    \int_{\mathbb{R}^N}\left((-\Delta)^{\frac{s}{2}} \widetilde{\omega}(-\Delta)^{\frac{s}{2}}\psi+  \widetilde{\omega} \psi\right)
    -p\int_{\mathbb{R}^N}U^{p-1}\widetilde{\omega}\psi=0,\ \ \mbox{for all}\ \ \psi\in C^{\infty}_0(\mathbb{R}^N).
    \end{equation*}
    Now, using (\ref{und}), it holds
    \begin{equation}\label{eqwwd}
    \widetilde{\omega}=\sum^N_{i=1}b_i\frac{\partial U}{\partial x_i},\ \ \mbox{for some}\ \ b_i\in\mathbb{R}.
    \end{equation}
    Recalling that for $\widetilde{\omega}\in E_{\varepsilon_j,\xi^j}$, we have $\widetilde{\omega}_j\in \widetilde{E}_j$, hence in (\ref{eqwwd}), we deduce that $\widetilde{\omega}=0$ and (\ref{eqwwj}) follows.
    Noting that $P_{\varepsilon_j,\Omega}U_{\varepsilon_j,\xi^j}$ is small in $\Omega\backslash B_{\varepsilon_j R}(\xi^j)$ if $R>0$ is large, and then from Lemma \ref{lemeguc} and (\ref{yyhls}), we get
    \begin{equation*}
    \begin{split}
    o_j(1)\varepsilon_j^N
    = & \langle \mathcal{A}_{\varepsilon_j,\xi^j}\omega_j,\omega_j \rangle_{\varepsilon_j} \\
    = & \|\omega_j\|^2_{\varepsilon_j}
    -\int_{\Omega}(P_{\varepsilon_j,\Omega}U_{\varepsilon_j,\xi^j})^{p-1}\omega^2_j dx \\
    & +\int_{\Omega}\Phi[P^2_{\varepsilon_j,\Omega}U_{\varepsilon_j,\xi^j}]\omega_j^2  dx
    +2\int_{\Omega}\Phi[\omega_j P_{\varepsilon_j,\Omega}U_{\varepsilon_j,\xi^j}]
    P_{\varepsilon_j,\Omega}U_{\varepsilon_j,\xi^j}\omega_j   dx\\
    \geq & \varepsilon_j^N
    - p\int_{\Omega}(P_{\varepsilon_j,\Omega}U_{\varepsilon_j,\xi^j})^{p-1}\omega^2_j dx
    +O(\varepsilon_j^2)\int_{\Omega}\omega_j^2  dx  +O(\varepsilon_j^2)\|\omega_j\|^2_{\varepsilon_j}\\
    = &  \varepsilon_j^N
    - p \int_{B_{\varepsilon_j R}(\xi^j)}(P_{\varepsilon_j,\Omega}U_{\varepsilon_j,\xi^j})^{p-1}\omega^2_j dx
    -p\int_{\Omega\backslash B_{\varepsilon_j R}(\xi^j)}(P_{\varepsilon_j,\Omega}U_{\varepsilon_j,\xi^j})^{p-1}\omega^2_j dx+o(\varepsilon_j^N)  \\
    = & (1+o_j(1)+o_R(1))\varepsilon_j^N  \\
    \geq & c_0 \varepsilon_j^N,
    \end{split}
    \end{equation*}
    for some small $c_0>0$, this is a contradiction. Thus the result holds.
    \end{proof}

    Furthermore, let us estimate the behavior of $R_\varepsilon(\omega)$.
    \begin{lemma}\label{lemrw}
    There exist a constant $C>0$ independent of $\varepsilon$, such that for any $\omega\in \mathbb{H}$ with $|\omega|\leq |P_{\varepsilon,\Omega}U_{\varepsilon,\xi}|$, we have
    \begin{equation}\label{rw0}
    \begin{split}
    \|R^{(i)}_{\varepsilon}(\omega)\|_\varepsilon\leq C(\varepsilon^{2s-\frac{N}{2}}\|\omega\|^{3-i}_\varepsilon+\varepsilon^{2s-N}\|\omega\|^{4-i}_\varepsilon
    +\varepsilon^{-\frac{N}{2}\min\{1,p-1\}}\|\omega\|^{\min\{3-i,p+1-i\}}_{\varepsilon}),
    \end{split}
    \end{equation}
    for $i=0,1,2.$
    \end{lemma}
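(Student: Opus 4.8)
The plan is to differentiate $R_\varepsilon$, given in (\ref{rr}), term by term and to estimate each derivative separately: the three nonlocal (Schr\"odinger--Poisson) contributions by the Hardy--Littlewood--Sobolev inequality of Proposition \ref{prohlsi} together with the $\varepsilon$-weighted embeddings $|u|_q\le C\varepsilon^{(\frac1q-\frac12)N}\|u\|_\varepsilon$ of (\ref{deflqfh}), and the local power contribution by elementary pointwise Taylor estimates for $t\mapsto t^{p+1}$. Here $R^{(0)}_\varepsilon(\omega)=R_\varepsilon(\omega)$ is a number and $\|\cdot\|_\varepsilon$ means its absolute value, while $R^{(1)}_\varepsilon(\omega)=R'_\varepsilon(\omega)$ and $R^{(2)}_\varepsilon(\omega)=R''_\varepsilon(\omega)$ are a bounded linear functional and a bounded bilinear form on $\mathbb{H}$, whose norms (after the Riesz identification, resp. as an operator) are to be estimated. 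Writing $V:=P_{\varepsilon,\Omega}U_{\varepsilon,\xi}$, differentiating the two $\Phi$-terms in (\ref{rr}) produces finite sums of integrals $\int_\Omega\Phi[g_1g_2]g_3g_4\,dx$ in which each $g_k$ equals $\omega$ or one of the increments $\eta,\zeta$, with the single exception that, for the cubic term, exactly one $g_k$ equals $V$; differentiating the power block $-\frac1{p+1}(V+\omega)^{p+1}+\frac1{p+1}V^{p+1}+V^p\omega+\frac p2 V^{p-1}\omega^2$ gives the $(2-i)$-th order Taylor remainder of $t\mapsto\frac1{p+1}t^{p+1}$ at $V$ with increment $\omega$, tested against $\eta,\zeta$ when $i\ge1$.

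For a generic nonlocal term, Proposition \ref{prohlsi} gives $|\int_\Omega\Phi[g_1g_2]g_3g_4\,dx|\le C|g_1g_2|_{2N/(N+2s)}|g_3g_4|_{2N/(N+2s)}$, and then H\"older's inequality in the symmetric form $|g_kg_l|_{2N/(N+2s)}\le|g_k|_{4N/(N+2s)}|g_l|_{4N/(N+2s)}$ is applied; this is admissible precisely because $\frac{4N}{N+2s}\in[2,\frac{2N}{N-2s}]$ when $2s<N\le6s$. By (\ref{deflqfh}), $|\omega|_{4N/(N+2s)}\le C\varepsilon^{-(N-2s)/4}\|\omega\|_\varepsilon$ (and likewise for $\eta,\zeta$), whereas for $V$ one uses $0\le V\le U_{\varepsilon,\xi}$ (from (\ref{defetae}) and the maximum principle) together with the decay (\ref{bu}) to get $|V|_{4N/(N+2s)}\le|U_{\varepsilon,\xi}|_{4N/(N+2s)}=(\varepsilon^N\int_{\mathbb{R}^N}U^{4N/(N+2s)})^{(N+2s)/(4N)}\le C\varepsilon^{(N+2s)/4}$. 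Hence every $V$-slot contributes a net factor $\varepsilon^{(N+2s)/4}\cdot\varepsilon^{-(N-2s)/4}=\varepsilon^{s}$ more than an $\omega$-slot, so the cubic terms (one $V$) carry the power $\varepsilon^{-(N-2s)/2+s}=\varepsilon^{2s-N/2}$ and the quartic terms (no $V$) carry $\varepsilon^{-(N-2s)}=\varepsilon^{2s-N}$, each multiplying the product of the $\|\cdot\|_\varepsilon$-norms of the remaining factors. Since exactly $3-i$ of those are copies of $\omega$ in the cubic case and $4-i$ in the quartic case, and the $\eta,\zeta$-norms are divided out when passing to the functional/operator norm, one obtains the first two terms of (\ref{rw0}). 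For $i=0,1$ one may alternatively split the cubic term by Cauchy--Schwarz and apply (\ref{epuu}) to the $\int\Phi[\omega^2]\omega^2$-factor directly; the resulting powers are the same.

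For the power block one invokes, when $p\ge2$, the pointwise bounds $|(a+b)^{p+1}-a^{p+1}-(p+1)a^pb-\tfrac{p(p+1)}2a^{p-1}b^2|\le C\,a^{p-2}|b|^3$, $|(a+b)^p-a^p-pa^{p-1}b|\le C\,a^{p-2}|b|^2$ and $|(a+b)^{p-1}-a^{p-1}|\le C\,a^{p-2}|b|$, all valid for $a\ge0$, $a+b\ge0$, $|b|\le a$; and, when $1<p<2$, the H\"older-continuity versions with every $a^{p-2}|b|^{k}$ replaced by $|b|^{p-1}$, so that the remainders are of order $|b|^{p+1-i}$. The hypothesis $|\omega|\le|V|$ is exactly what guarantees $a+b\ge0$ and $|b|\le a$ with $a=V$, $b=\omega$, and, since $0\le V\le U(0)$, lets the factor $V^{p-2}$ occurring when $p\ge2$ be absorbed into a constant. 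Integrating and using (\ref{deflqfh}) with exponent $3$ if $p\ge2$ (note $3\le\frac{2N}{N-2s}$ because $N\le6s$) or with exponent $p+1$ if $1<p<2$ (note $2<p+1<\frac{2N}{N-2s}$) yields, after dividing out $\|\eta\|_\varepsilon$ and $\|\zeta\|_\varepsilon$, the bound $C\varepsilon^{-\frac N2\min\{1,p-1\}}\|\omega\|_\varepsilon^{\min\{3-i,\,p+1-i\}}$, which is the third term of (\ref{rw0}). Combining the three groups of estimates and recalling $\int_\Omega\omega^2\,dx\le\|\omega\|_\varepsilon^2$ completes the argument for each $i=0,1,2$.

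The delicate point is the $\varepsilon$-power bookkeeping in the second paragraph: one must pair the single $V$ of each cubic term, inside the H\"older splitting of the $L^{2N/(N+2s)}$-factor that contains it, with an $\omega$ (or $\eta$, or $\zeta$) lying in the \emph{same} $L^{4N/(N+2s)}$-norm, so as to extract the full gain $\varepsilon^{2s}$ produced by the algebraic decay of $P_{\varepsilon,\Omega}U_{\varepsilon,\xi}$; the naive split $|V\omega|_{2N/(N+2s)}\le|V|_{N/s}|\omega|_{2N/(N-2s)}\le C\varepsilon^{s}\cdot\varepsilon^{-s}\|\omega\|_\varepsilon$ only gives $\varepsilon^{0}$ and would not suffice. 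Everything else is the routine — if somewhat lengthy — enumeration of the cross terms generated by differentiating the cubic and quartic nonlinearities once and twice.
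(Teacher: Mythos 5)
Your proposal is correct and follows essentially the same route as the paper: split $R_\varepsilon$ into the local power block (handled by pointwise Taylor bounds, distinguishing $p\ge 2$ from $1<p<2$ and using $|\omega|\le P_{\varepsilon,\Omega}U_{\varepsilon,\xi}$) and the nonlocal $\Phi$-block (handled by Hardy--Littlewood--Sobolev plus the $\varepsilon$-weighted embedding (\ref{deflqfh})), with the same $\varepsilon$-power bookkeeping. The only cosmetic difference is that you split $|V\omega|_{2N/(N+2s)}$ symmetrically into two $L^{4N/(N+2s)}$-norms while the paper uses $|\omega|_{2}|V|_{N/s}$ — both yield the net factor $\varepsilon^{s}$ per $V$-slot — though note that the ``naive split'' you exhibit as a cautionary example is not actually an admissible H\"older pairing since $\frac{s}{N}+\frac{N-2s}{2N}=\frac12\neq\frac{N+2s}{2N}$.
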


    \begin{proof}
    This proof follows adapting the same arguments as in \cite[Lemma 3.10]{dsa}, see also \cite[Lemma 3.1]{lz17}. For each $\omega\in \mathbb{H}$ with $|\omega|\leq |P_{\varepsilon,\Omega}U_{\varepsilon,\xi}|$, recalling the definition $R_{\varepsilon}(\omega)$ in (\ref{rr}) that
    \begin{equation*}
    \begin{split}
    R_{\varepsilon}(\omega)
    = &\int_{\Omega}\frac{(P_{\varepsilon,\Omega}U_{\varepsilon,\xi})^{p+1}-(P_{\varepsilon,\Omega}U_{\varepsilon,\xi}+\omega)^{p+1}}{p+1}dx
    +\int_{\Omega}(P_{\varepsilon,\Omega}U_{\varepsilon,\xi})^p\omega dx
    +\frac{p}{2}\int_{\Omega}(P_{\varepsilon,\Omega}U_{\varepsilon,\xi})^{p-1}\omega^2 dx  \\
    & + \int_{\Omega}\Phi[\omega^2]P_{\varepsilon,\Omega}U_{\varepsilon,\xi}\omega dx
    +\frac{1}{4}\int_{\Omega}\Phi[\omega^2]\omega^2 dx  \\
    := & R_{1,\varepsilon}(\omega)+R_{2,\varepsilon}(\omega).
    \end{split}
    \end{equation*}
    Then we can obtain that
    \begin{equation*}
    \begin{split}
    \left|R_{1,\varepsilon}(\omega)\right|
    \leq C \int_{\Omega}\left|(P_{\varepsilon,\Omega}U_{\varepsilon,\xi})^{p-2}\omega^3\right| dx.
    \end{split}
    \end{equation*}
    One the one hand, if $1<p<2$, from (\ref{deflqfh}) we can get
    \begin{equation*}
    \begin{split}
    \left|R_{1,\varepsilon}(\omega)\right|
    \leq & C \int_{\Omega}\left|\frac{\omega}{P_{\varepsilon,\Omega}U_{\varepsilon,\xi}}\right|^{2-p}|\omega|^{1+p}dx
    \leq  C \int_{\Omega}|\omega|^{1+p}dx
    \leq  C \varepsilon^{(1-\frac{p}{2})N}\|\omega\|^{p+1}_{\varepsilon},
    \end{split}
    \end{equation*}
    and
    \begin{equation*}
    \begin{split}
    \left|R'_{1,\varepsilon}(\omega)\eta\right|
    \leq & C \int_{\Omega}|\omega|^{p}|\eta| dx
    \leq  C \left(\int_{\Omega}|\omega|^{p+1}dx\right)^{\frac{p}{p+1}}\left(\int_{\Omega}|\eta|^{p+1}dx\right)^{\frac{1}{p+1}}
    \leq  C \varepsilon^{(1-\frac{p}{2})N}\|\omega\|^p_{\varepsilon}\|\eta\|_{\varepsilon},
    \end{split}
    \end{equation*}
    moreover
    \begin{equation*}
    \begin{split}
    \left|R''_{1,\varepsilon}(\omega)(\eta_1,\eta_2)\right|
    \leq & C \int_{\Omega}|\omega|^{p-1}|\eta_1||\eta_2| dx  \\
    \leq & C \left(\int_{\Omega}|\omega|^{p+1}dx\right)^{\frac{p-1}{p+1}}\left(\int_{\Omega}|\eta_1|^{p+1}dx\right)^{\frac{1}{p+1}}
    \left(\int_{\Omega}|\eta_2|^{p+1}dx\right)^{\frac{1}{p+1}}  \\
    \leq & C \varepsilon^{(1-\frac{p}{2})N}\|\omega\|^{p-1}_{\varepsilon}\|\eta_1\|_{\varepsilon}\|\eta_2\|_{\varepsilon}.
    \end{split}
    \end{equation*}
    On the other hand, for the case $p>2$, we also can obtain that
    \begin{equation*}
    \begin{split}
    \left|R_{1,\varepsilon}(\omega)\right|
    \leq & C \varepsilon^{-\frac{N}{2}}\|\omega\|^{3}_{\varepsilon},  \\
    \left|R'_{1,\varepsilon}(\omega)\eta\right|
    \leq & C\varepsilon^{-\frac{N}{2}}\|\omega\|^2_{\varepsilon}\|\eta\|_{\varepsilon},  \\
    \left|R''_{1,\varepsilon}(\omega)(\eta_1,\eta_2)\right|
    \leq & C \varepsilon^{-\frac{N}{2}}\|\omega\|_{\varepsilon}\|\eta_1\|_{\varepsilon}\|\eta_2\|_{\varepsilon}.
    \end{split}
    \end{equation*}
    Then by using (HLS) inequality and (\ref{deflqfh}), (\ref{epuu}), we obtain that
    \begin{equation*}
    \begin{split}
    R_{2,\varepsilon}(\omega)
    = & \int_{\Omega}\Phi[\omega^2]P_{\varepsilon,\Omega}U_{\varepsilon,\xi}\omega dx
    +\frac{1}{4}\int_{\Omega}\Phi[\omega^2]\omega^2 dx  \\
    \leq & C\left(\int_{\Omega}|\omega|^{\frac{4N}{N+2s}}\right)^{\frac{N+2s}{2N}}
    \left(\int_{\Omega}(\omega P_{\varepsilon,\Omega}U_{\varepsilon,\xi})^{\frac{2N}{N+2s}}\right)^{\frac{N+2s}{2N}}
    + C\varepsilon^{2s-N}\|\omega\|^4_\varepsilon\\
    \leq & C\left(\int_{\Omega}|\omega|^{\frac{4N}{N+2s}}\right)^{\frac{N+2s}{2N}}
    \left(\int_{\Omega}|\omega|^2\right)^{\frac{1}{2}}
    \left(\int_{\Omega}|P_{\varepsilon,\Omega}U_{\varepsilon,\xi}|^{\frac{N}{s}}\right)^{\frac{s}{N}}
    + C\varepsilon^{2s-N}\|\omega\|^4_\varepsilon\\
    = & O(\varepsilon^{2s-\frac{N}{2}}\|\omega\|^3_\varepsilon+\varepsilon^{2s-N}\|\omega\|^4_\varepsilon).
    \end{split}
    \end{equation*}
    Similarly,
    \begin{equation*}
    \begin{split}
    \left|R'_{2,\varepsilon}(\omega)\eta\right|= & O(\varepsilon^{2s-\frac{N}{2}}\|\omega\|^2_\varepsilon\|\eta\|_\varepsilon+\varepsilon^{2s-N}\|\omega\|^3_\varepsilon\|\eta\|_\varepsilon),\\
    \left|R''_{2,\varepsilon}(\omega)\eta_1\eta_2\right|= & O(\varepsilon^{2s-\frac{N}{2}}\|\omega\|_\varepsilon\|\eta_1\|_\varepsilon\|\eta_2\|_\varepsilon
    +\varepsilon^{2s-N}\|\omega\|^2_\varepsilon\|\eta_1\|_\varepsilon\|\eta_2\|_\varepsilon).
    \end{split}
    \end{equation*}
    Consequently,
    \begin{equation*}
    \begin{split}
    \|R^{(i)}_{\varepsilon}(\omega)\|_\varepsilon
    \leq & \|R^{(i)}_{1,\varepsilon}(\omega)\|_\varepsilon+\|R^{(i)}_{2,\varepsilon}(\omega)\|_\varepsilon  \\
    \leq & C(\varepsilon^{2s-\frac{N}{2}}\|\omega\|^{3-i}_\varepsilon+\varepsilon^{2s-N}\|\omega\|^{4-i}_\varepsilon
    +\varepsilon^{-\frac{N}{2}\min\{1,p-1\}}\|\omega\|^{\min\{3-i,p+1-i\}}_{\varepsilon})
    \end{split}
    \end{equation*}
    for $i=0,1,2.$
    \end{proof}

    Next we prove:
    \begin{proposition}\label{profu}
    There is an $\varepsilon_0>0$ such that for each $\varepsilon\in (0,\varepsilon_0]$, there exists a unique $C^1$-map $\omega_{\varepsilon,\xi}: D_{\varepsilon,R}\rightarrow E_{\varepsilon,\xi}$ such that $J'_\varepsilon(P_{\varepsilon,\Omega}U_{\varepsilon,\xi}+\omega_{\varepsilon,\xi})\in E_{\varepsilon,\xi}^{\perp}$, that is
    \begin{equation}\label{fu0}
    \left\langle \frac{\partial J_\varepsilon\left(P_{\varepsilon,\Omega}U_{\varepsilon,\xi}+\omega_{\varepsilon,\xi}\right)}{\partial \omega},\eta\right\rangle_\varepsilon=0,\ \ \forall \eta\in E_{\varepsilon,\xi},
    \end{equation}
    where $J_\varepsilon$ is defined in {\rm (\ref{efPb})}. Moreover, we have
    \begin{equation}\label{fuo}
    \|\omega_{\varepsilon,\xi}\|_\varepsilon=\varepsilon^{\frac{N}{2}}O\left(\varepsilon^{\kappa}\right),\ \ \forall \xi\in D_{\varepsilon,R}.
    \end{equation}
    where $\kappa=\frac{2s+\zeta_0(N-2s)}{2}$ and $\zeta_0$ is given in Lemma \ref{lemel}.
    \end{proposition}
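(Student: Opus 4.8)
The plan is to carry out the last step of the Lyapunov--Schmidt scheme by recasting (\ref{fu0}) as a fixed-point equation for a contraction. As observed just before (\ref{re}), for $\omega\in E_{\varepsilon,\xi}$ equation (\ref{fu0}) is equivalent to
\begin{equation*}
l_{\varepsilon,\xi}+\mathcal{A}_{\varepsilon,\xi}\omega+R'_\varepsilon(\omega)=0\qquad\text{in }E_{\varepsilon,\xi},
\end{equation*}
where $l_{\varepsilon,\xi}$, $\mathcal{A}_{\varepsilon,\xi}$ and $R'_\varepsilon=R^{(1)}_\varepsilon$ are given by (\ref{ellwl}), (\ref{qba}) and Lemma \ref{lemrw}. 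Since $\mathcal{A}_{\varepsilon,\xi}$ is invertible on $E_{\varepsilon,\xi}$ with $\|\mathcal{A}_{\varepsilon,\xi}^{-1}\|\le\tau^{-1}$ by Lemma \ref{lemeqw}, this is equivalent to $\omega=T_{\varepsilon,\xi}(\omega):=-\mathcal{A}_{\varepsilon,\xi}^{-1}\bigl(l_{\varepsilon,\xi}+R'_\varepsilon(\omega)\bigr)$. I would run the iteration on the closed ball $\mathcal{B}_\varepsilon:=\{\omega\in E_{\varepsilon,\xi}:\|\omega\|_\varepsilon\le M\varepsilon^{N/2+\kappa}\}$, with $\kappa=\tfrac{2s+\zeta_0(N-2s)}{2}$ and $M$ a large constant to be chosen, having first, if necessary, replaced the nonlinearity $u^{p+1}_{+}$ by a truncation that agrees with it on the region $|\omega|\le|P_{\varepsilon,\Omega}U_{\varepsilon,\xi}|$, so that Lemma \ref{lemrw} is applicable throughout $\mathcal{B}_\varepsilon$.

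The self-mapping property uses Lemma \ref{lemel}, which gives $\|l_{\varepsilon,\xi}\|_\varepsilon=O\bigl(\varepsilon^{((N+2s)+\zeta_0(N-2s))/2}\bigr)=O(\varepsilon^{N/2+\kappa})$, together with the $i=1$ case of (\ref{rw0}): plugging $\|\omega\|_\varepsilon\le M\varepsilon^{N/2+\kappa}$ into it makes the three terms $\varepsilon^{2s-N/2}\|\omega\|_\varepsilon^2$, $\varepsilon^{2s-N}\|\omega\|_\varepsilon^3$, $\varepsilon^{-\frac N2\min\{1,p-1\}}\|\omega\|_\varepsilon^{\min\{2,p\}}$ of sizes $\varepsilon^{N/2+\kappa}$ times $\varepsilon^{2s+\kappa}$, $\varepsilon^{2s+2\kappa}$ and $\varepsilon^{\min\{\kappa,(p-1)\kappa\}}$ respectively, all $o(\varepsilon^{N/2+\kappa})$ because $\kappa>0$ and $p>1$; hence $\|T_{\varepsilon,\xi}(\omega)\|_\varepsilon\le\tau^{-1}\bigl(\|l_{\varepsilon,\xi}\|_\varepsilon+\|R'_\varepsilon(\omega)\|_\varepsilon\bigr)\le M\varepsilon^{N/2+\kappa}$ for $M$ large and $\varepsilon$ small. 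The contraction property uses the $i=2$ case of (\ref{rw0}): one has $\|T_{\varepsilon,\xi}(\omega_1)-T_{\varepsilon,\xi}(\omega_2)\|_\varepsilon\le\tau^{-1}\sup_{\mathcal{B}_\varepsilon}\|R''_\varepsilon(\cdot)\|_\varepsilon\,\|\omega_1-\omega_2\|_\varepsilon$, and on $\mathcal{B}_\varepsilon$ the same computation gives $\sup_{\mathcal{B}_\varepsilon}\|R''_\varepsilon(\omega)\|_\varepsilon=O(\varepsilon^{2s+\kappa}+\varepsilon^{2s+2\kappa}+\varepsilon^{\min\{\kappa,(p-1)\kappa\}})=o(1)$, so $T_{\varepsilon,\xi}$ is a contraction for $\varepsilon$ small. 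The Banach fixed-point theorem then produces a unique $\omega_{\varepsilon,\xi}\in\mathcal{B}_\varepsilon$ solving (\ref{re}), hence (\ref{fu0}); since any solution of (\ref{fu0}) of size $o(\varepsilon^{N/2+\kappa})$ lies in $\mathcal{B}_\varepsilon$, it coincides with $\omega_{\varepsilon,\xi}$, which gives uniqueness. Estimate (\ref{fuo}) is then immediate: from $\omega_{\varepsilon,\xi}=T_{\varepsilon,\xi}(\omega_{\varepsilon,\xi})$ and the two bounds above, $\|\omega_{\varepsilon,\xi}\|_\varepsilon\le\tau^{-1}\bigl(\|l_{\varepsilon,\xi}\|_\varepsilon+\|R'_\varepsilon(\omega_{\varepsilon,\xi})\|_\varepsilon\bigr)=\varepsilon^{N/2}O(\varepsilon^\kappa)$.

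For the $C^1$ dependence of $\xi\mapsto\omega_{\varepsilon,\xi}$ on $D_{\varepsilon,R}$ I would apply the implicit function theorem to $\mathcal{G}(\xi,\omega):=l_{\varepsilon,\xi}+\mathcal{A}_{\varepsilon,\xi}\omega+R'_\varepsilon(\omega)$, which is $C^1$ in $(\xi,\omega)$ since $\xi\mapsto U_{\varepsilon,\xi}$ and $\xi\mapsto P_{\varepsilon,\Omega}U_{\varepsilon,\xi}$ are smooth and $R_\varepsilon$ is $C^2$ on the admissible class by Lemma \ref{lemrw}; its partial derivative in $\omega$ at $(\xi,\omega_{\varepsilon,\xi})$, namely $\mathcal{A}_{\varepsilon,\xi}+R''_\varepsilon(\omega_{\varepsilon,\xi})$, is invertible because $\mathcal{A}_{\varepsilon,\xi}$ is (Lemma \ref{lemeqw}) and $\|R''_\varepsilon(\omega_{\varepsilon,\xi})\|_\varepsilon=o(1)$. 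The usual nuisance that the space $E_{\varepsilon,\xi}$ varies with $\xi$ is handled in the standard way, by composing with the orthogonal projector onto $E_{\varepsilon,\xi}$, which itself depends smoothly on $\xi$ through the $\partial_{\xi_i}P_{\varepsilon,\Omega}U_{\varepsilon,\xi}$.

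At this stage the argument is essentially routine: all the real content sits in Lemmas \ref{lemel}, \ref{lemeqw} and \ref{lemrw}, whose proofs already consume the restrictions on $\varpi$ in (\ref{defw}), on $\zeta_0$, and on the dimension range $2s<N<8s$. The only genuinely delicate point inside the present proof is that Lemma \ref{lemrw} requires the pointwise bound $|\omega|\le|P_{\varepsilon,\Omega}U_{\varepsilon,\xi}|$, which is why truncating the nonlinearity (or an equivalent device) should be built into the scheme from the outset and then removed a posteriori once one knows the solution is small; beyond that, it is purely a matter of checking that the radius $M\varepsilon^{N/2+\kappa}$ — dictated by the size of $l_{\varepsilon,\xi}$ — is compatible with the higher-order error bounds (\ref{rw0}), which it is precisely because $\kappa>0$ and $p>1$.
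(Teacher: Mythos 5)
Your proposal follows essentially the same route as the paper: the same reduction to the fixed-point equation $\omega=-\mathcal{A}_{\varepsilon,\xi}^{-1}(l_{\varepsilon,\xi}+R'_\varepsilon(\omega))$, the same contraction argument on a ball of radius $O(\varepsilon^{N/2+\kappa})$ using Lemmas \ref{lemel}, \ref{lemeqw} and \ref{lemrw}, and the same implicit-function-theorem argument for $C^1$ dependence. Your explicit handling (via truncation) of the pointwise constraint $|\omega|\le|P_{\varepsilon,\Omega}U_{\varepsilon,\xi}|$ needed for Lemma \ref{lemrw} is in fact slightly more careful than the paper's treatment, which simply imposes that constraint on the ball without comment.
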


    \begin{proof}
    This proof is shown in \cite[Proposition 4.18]{adm}. As stated in Section \ref{sectrt}, let
    \begin{equation*}
    K(\xi,\omega)=J_\varepsilon\left(P_{\varepsilon,\Omega}U_{\varepsilon,\xi}+\omega\right),\ \xi\in D_{\varepsilon,R},\ \omega\in E_{\varepsilon,\xi}.
    \end{equation*}
    Expanding $K$ near $\omega=0$, it holds
    \begin{equation*}
    K(\xi,\omega)=J_\varepsilon\left(P_{\varepsilon,\Omega}U_{\varepsilon,\xi}\right)+L_{\varepsilon,\xi}(\omega)+\frac{1}{2}Q_{\varepsilon,\xi}(\omega,\omega)+R_\varepsilon(\omega),
    \end{equation*}
    Then, (\ref{fu0}) is equivalent to find a critical point of $K(\xi,\cdot)$ in $E_{\varepsilon,\xi}$. Using Lemma \ref{lemeqw}, we see that $\mathcal{A}_{\varepsilon,\xi}$ is invertible in $E_{\varepsilon,\xi}$ and there exists a constant $C$ independent of $\varepsilon$ and $\xi$, such that
    \begin{equation*}
    \|\mathcal{A}_{\varepsilon,\xi}\|_\varepsilon\leq  C.
    \end{equation*}
    Thus finding critical point for $K(\xi,\cdot)$ in $E_{\varepsilon,\xi}$ is equivalent to solving
    \begin{equation}\label{fulqr}
    l_{\varepsilon,\xi}+\mathcal{A}_{\varepsilon,\xi}\omega+R'_{\varepsilon}(\omega)=0.
    \end{equation}
    Rewrite (\ref{fulqr}) as $\omega=-\mathcal{A}_{\varepsilon,\xi}^{-1}(l_{\varepsilon,\xi}+R'_{\varepsilon}(\omega))$ and let
    \begin{equation*}
    G(\omega)=-\mathcal{A}_{\varepsilon,\xi}^{-1}(l_{\varepsilon,\xi}+R'_{\varepsilon}(\omega)),\ \ \xi\in D_{\varepsilon,R},\ \ \omega\in E_{\varepsilon,\xi}.
    \end{equation*}
    Hence, the problem reduces to finding a fixed point of the map $G$. For any $\omega_1,\ \omega_2\in E_{\varepsilon,\xi}$ with
    $\|\omega_1\|_{\varepsilon}, \|\omega_2\|_{\varepsilon}\leq \varepsilon^{\frac{N}{2}+\kappa}$  and $|\omega_1|, |\omega_2|\leq |P_{\varepsilon,\Omega}U_{\varepsilon,\xi}|$. Then by Lemma \ref{lemrw}, we get
    \begin{equation*}
    \begin{split}
    \|G(\omega_1)-G(\omega_2)\|_{\varepsilon}\leq & C \|R'_\varepsilon(\omega_1)-R'_\varepsilon(\omega_2)\|_{\varepsilon}  \\
    \leq & C_1 \|\int^1_0 R''_\varepsilon(t\omega_1+(1-t)\omega_2)(\omega_1-\omega_2) dt\|_{\varepsilon}  \\
    \leq & C_1 \max_{t\in [0,1]}\|R''_\varepsilon(t\omega_1+(1-t)\omega_2)\|_{\varepsilon}\|\omega_1-\omega_2\|_{\varepsilon} \\
    \leq & C_2 \left[\varepsilon^{2s+\kappa}+\varepsilon^{2s+2\kappa}+\varepsilon^{N\left(1-\frac{\min\{p+1,3\}}{2}\right)}\varepsilon^{(\frac{N}{2}+\kappa)\min\{p-1,1\}}\right]\|\omega_1-\omega_2\|_{\varepsilon} \\
    \leq & C_3 \varepsilon^{\kappa\min\{p-1,1\}}\|\omega_1-\omega_2\|_{\varepsilon}.
    \end{split}
    \end{equation*}
    Using the fact that $\kappa\min\{p-1,1\}>0$, we can choose $0<\nu<1$ such that
    \begin{equation}\label{fugw12}
    \begin{split}
    \|G(\omega_1)-G(\omega_2)\|_{\varepsilon}\leq & C_4 \varepsilon^{\nu}\|\omega_1-\omega_2\|_{\varepsilon},
    \end{split}
    \end{equation}
    so $G$ is a contraction map. On the other hand, for any $\omega\in E_{\varepsilon,\xi}$ with $\|\omega\|_{\varepsilon}\leq \varepsilon^{\frac{N}{2}+\kappa}$ and $|\omega|\leq |P_{\varepsilon,\Omega}U_{\varepsilon,\xi}|$, it holds that
    \begin{equation*}
    \begin{split}
    \|G(\omega)\|_{\varepsilon}\leq & C \|l_{\varepsilon,\xi}\|_\varepsilon+ C\|R_{\varepsilon}(\omega)\|_\varepsilon  \\
    \leq & C \|l_{\varepsilon,\xi}\|_\varepsilon+ C_5\left[\varepsilon^{2s-\frac{N}{2}}\|\omega\|^{3}_\varepsilon+\varepsilon^{2s-N}\|\omega\|^{4}_\varepsilon
    +\varepsilon^{N\left(1-\frac{\min\{p+1,3\}}{2}\right)}\|\omega\|_\varepsilon^{\min\{p,2\}}\right]  \\
    \leq & C \|l_{\varepsilon,\xi}\|_\varepsilon+ C_6\left[\varepsilon^{N+2s+3\kappa}+\varepsilon^{N+2s+4\kappa}+\varepsilon^{N\left(1-\frac{\min\{p+1,3\}}{2}\right)} \varepsilon^{\min\{p,2\}(\frac{N}{2}+\kappa)}\right]  \\
    = & C \|l_{\varepsilon,\xi}\|_\varepsilon+ C'_6\varepsilon^{\frac{N}{2}+\kappa\min\{p,2\}}.
    \end{split}
    \end{equation*}
    Hence, using Lemma \ref{lemel} and the fact $\min\{p,2\}>1$, we get
    \begin{equation}\label{fugw}
    \begin{split}
    \|G(\omega)\|_{\varepsilon}\leq  C_7\varepsilon^{\frac{N}{2}+\kappa}.
    \end{split}
    \end{equation}
    Combining (\ref{fugw12}) and (\ref{fugw}), we see that $G$ is a contraction map from $E_{\varepsilon,\xi}\cap B(0,\varepsilon^{\frac{N}{2}+\kappa})$ into itself. By the contraction mapping Theorem, we get the existence of unique $\omega_{\varepsilon,\xi}\in E_{\varepsilon,\xi}\cap B(0,\varepsilon^{\frac{N}{2}+\kappa})$ such that
    \begin{equation*}
    \omega_{\varepsilon,\xi}=G(\omega_{\varepsilon,\xi}).
    \end{equation*}
    Furthermore from (\ref{fugw}), we get
    \begin{equation*}
    \|\omega_{\varepsilon,\xi}\|_{\varepsilon}=\varepsilon^{\frac{N}{2}}O(\varepsilon^{\kappa}). 
    \end{equation*}
    We prove now that $\xi\rightarrow \omega_{\varepsilon,\xi}$ is $C^1$. In order to do that, let us denote
    \begin{equation*}
    \Psi(\varepsilon,\xi,\omega)=\omega-G(\omega),\ \ \omega\in E_{\varepsilon,\xi}.
    \end{equation*}
    Then, $\Psi(\varepsilon,\xi,\omega_{\varepsilon,\xi})=0$. On the other hand,
    \begin{equation*}
    \partial_{\omega}\Psi(\varepsilon,\xi,0)(\eta)=\eta+\mathcal{A}_{\varepsilon,\xi}^{-1}(R''_{\varepsilon}(\omega)\eta).
    \end{equation*}
    Using Lemma \ref{lemeqw} and Lemma \ref{lemrw}, we conclude that $\partial_{\omega}\Psi(\varepsilon,\xi,0)$ is, for $\varepsilon$ small, invertible as a small perturbation of the identity. Thus the result follows by the implicit function Theorem.
    \end{proof}

\section{{\bfseries Proof of main result}}\label{secteps}

\noindent{\bfseries Proof of Theorem \ref{thmcs}.}

    Considering $s\in (0,1)$, $2s<N\leq 6s$ 
    and $p\in (1,\frac{N+2s}{N-2s})$ for our problem (\ref{Pb}). Then we define
    \begin{equation*}
    D_{\varepsilon}:=\left\{\xi:\xi\in\Omega,\ d(\xi,\partial\Omega)\in \left[\varepsilon^{1-\varpi}, \varepsilon^{1-\mu}\right]\right\},
    \end{equation*}
    where $\varpi, \mu$ are fixed two positive constants satisfying

    \begin{equation}\label{ptdefvm}
    1>\mu>\frac{1}{3}>\varpi>\max\left\{\frac{N+4s}{6(N+2s)},\frac{N+4s}{3[(1+p)(N+2s)-N]}, \frac{N-2s}{3(N+2-2s)}, \frac{6-2N-14s}{3(N+2-2s)}\right\}.
    \end{equation}

    Consider $\inf_{\xi\in D_{\varepsilon}} M_\varepsilon(\xi)$, where
    \begin{equation*}
    M_\varepsilon(\xi)=J_\varepsilon(P_{\varepsilon,\Omega}U_{\varepsilon,\xi}+\omega_{\varepsilon,\xi}).
    \end{equation*}
    Let $\xi_{\varepsilon}\in D_{\varepsilon}$ be a minimum point of $M_\varepsilon(\xi)$ in $D_{\varepsilon}$. In order to prove that $P_{\varepsilon,\Omega}U_{\varepsilon,\xi_\varepsilon}+\omega_{\varepsilon,\xi_\varepsilon}$ is a solution of (\ref{Pb}), it's well known that as is shown in \cite[Lemma 7.16]{dddv}
    and also \cite[Lemma 4.19]{adm}, we only need to prove that $\xi_\varepsilon$ is an interior point of $D_{\varepsilon}$. It follows from Lemmas \ref{lemel}, \ref{lemqb}, \ref{lemrw}, and Propositions \ref{proejbu}, \ref{profu}, that
    \begin{equation}\label{pthmm}
    \begin{split}
    M_\varepsilon(\xi)= & J_\varepsilon(P_{\varepsilon,\Omega}U_{\varepsilon,\xi})
    +O\left(\|l_{\varepsilon,\xi}\|_{\varepsilon}\|\omega_{\varepsilon,\xi}\|_{\varepsilon}
    +\|\omega_{\varepsilon,\xi}\|_{\varepsilon}^2+R_\varepsilon(\omega_{\varepsilon,\xi})\right)  \\
    = & J_\varepsilon(P_{\varepsilon,\Omega}U_{\varepsilon,\xi})
    +O\left(\varepsilon^{N+2s+\zeta_0(N-2s)}\right)  \\
    = & \varepsilon^{N}A_1+\frac{1}{4}\varepsilon^{N+2s}A_2
    +\frac{1}{2}\tau_{\varepsilon,\xi}-\frac{1}{4}\varepsilon^{2N}B^2 H(\xi,\xi)+O\left(\varepsilon^{N+2s+\zeta_0(N-2s)}\right)  \\
    & +\varepsilon^{N}O\left(\left(\frac{\varepsilon}{d(\xi,\partial\Omega)}\right)^{(1+p)(N+2s)-N}
    +\mu_{\varepsilon,\xi}
    \right),
    \end{split}
    \end{equation}
    where $\zeta_0$ is given in Lemma \ref{lemel} and $\mu_{\varepsilon,\xi}$ is given in (\ref{defmu}).
    Then from (\ref{ptdefvm}), we obtain

    \begin{equation}\label{pthmm2}
    \begin{split}
    M_\varepsilon(\xi)
    = & \varepsilon^{N}A_1+\frac{1}{4}\varepsilon^{N+2s}A_2
    +\frac{1}{2}\tau_{\varepsilon,\xi}-\frac{1}{4}\varepsilon^{2N}B^2 H(\xi,\xi)+o\left(\varepsilon^{N+2s+\zeta(N-2s)}\right),
    \end{split}
    \end{equation}
    where $\zeta$ is a fixed constant with $\frac{1}{3}<\zeta<\min\{\zeta_0,\mu\}$ and closes to $\frac{1}{3}$.

    Let $\xi^*\in D_{\varepsilon}$ with $d(\xi^*,\partial\Omega)=\varepsilon^{1-\zeta}$.
    Noting that $H(\xi^*,\xi^*)\sim \frac{1}{d(\xi^*,\partial\Omega)^{N-2s}}$ as $d(\xi^*,\partial\Omega)\to 0$ (for similar result to Laplacian, see \cite[Proposition 6.7.1.]{cpy21}), then we obtain from Lemma \ref{lemetx} that
    \begin{equation}\label{prothmx}
    \begin{split}
    M_\varepsilon(\xi^*)\leq & \varepsilon^{N}A_1+\frac{1}{4}\varepsilon^{N+2s}A_2
    +C_1 \varepsilon^{N+\zeta(N+4s)} -C_2 \varepsilon^{N+2s+\zeta(N-2s)} +o\left(\varepsilon^{N+2s+\zeta(N-2s)}\right)  \\
    \leq & \varepsilon^{N}A_1+\frac{1}{4}\varepsilon^{N+2s}A_2
    -C_3 \varepsilon^{N+2s+\zeta(N-2s)}.
    \end{split}
    \end{equation}

    Next, for any $\xi'\in \Omega$ with $d(\xi',\partial\Omega)=\varepsilon^{1-\varpi}$, we get
    \begin{equation}\label{prothmdx}
    \begin{split}
    M_\varepsilon(\xi')\geq & \varepsilon^{N}A_1+\frac{1}{4}\varepsilon^{N+2s}A_2
    +C_1 \varepsilon^{N+\varpi(N+4s)}
    -C_2 \varepsilon^{N+2s+\varpi(N-2s)} +o\left(\varepsilon^{N+2s+\zeta(N-2s)}\right)  \\
    \geq & \varepsilon^{N}A_1+\frac{1}{4}\varepsilon^{N+2s}A_2
    +o\left(\varepsilon^{N+2s+\zeta(N-2s)}\right),
    \end{split}
    \end{equation}
    where $\varrho>0$ is small.
    Combining (\ref{prothmx}) and (\ref{prothmdx}), we see that if $\varepsilon>0$ is small enough, $M_\varepsilon(\xi)$ can not attain its minimum on $\{\xi: \xi\in \Omega,d(\xi,\partial\Omega)=\varepsilon^{1-\varpi}\}$.

    On the other hand, for any $\xi''\in \Omega$ with $d(\xi'',\partial\Omega)=\varepsilon^{1-\mu}$, we get
    \begin{equation}\label{prothmdd}
    \begin{split}
    M_\varepsilon(\xi'')\geq & \varepsilon^{N}A_1+\frac{1}{4}\varepsilon^{N+2s}A_2
    +C_1 \varepsilon^{N+\mu(N+4s)}
    -C_2 \varepsilon^{N+2s+\mu(N-2s)} +o\left(\varepsilon^{N+2s+\zeta(N-2s)}\right)  \\
    \geq & \varepsilon^{N}A_1+\frac{1}{4}\varepsilon^{N+2s}A_2
    +o\left(\varepsilon^{N+2s+\zeta(N-2s)}\right).
    \end{split}
    \end{equation}
    Combining (\ref{prothmx}) and (\ref{prothmdd}), we see that $\varepsilon>0$ is small enough,
    $M_\varepsilon(\xi)$ also can not attain its minimum on $\{\xi: \xi\in \Omega,d(\xi,\partial\Omega)=\varepsilon^{1-\mu}\}$.

    Thus we have proved that $\xi_\varepsilon$ is an interior point of $D_\varepsilon$. Since $\varpi<\frac{1}{3}$ can be chosen sufficiently close to $\frac{1}{3}$ and $1>\mu>\frac{1}{3}$ is arbitrary, we can obtain $d(\xi_{\varepsilon},\partial\Omega)\in (\varepsilon^{\frac{2}{3}+\theta}, \varepsilon^{\frac{2}{3}-\theta})$ for any small $\theta>0$ and the proof of Theorem \ref{thmcs} is complete.
    \qed

    \begin{remark}\label{remeb}{\rm
    In fact, taking $\varpi$ as in (\ref{ptdefvm}), then by a simply calculation, it holds that
    \begin{equation*}
    \begin{split}
    2N+2-(1-\varpi)(N-2s+2)> & \frac{4}{3}(N+s),  \\
    3N+4s-(1-\varpi)(N-2s+2)> & \frac{4}{3}(N+s),  \\
    N+\varpi\left[(1+p)(N+2s)-N\right]> & \frac{4}{3}(N+s).
    \end{split}
    \end{equation*}
    then we can choose $\frac{1}{3}<\zeta<\min\{\zeta_0,\mu\}$ closes to $\frac{1}{3}$ where $\zeta_0$ is given in Lemma \ref{lemel}, such that
    \begin{equation*}
    \begin{split}
    \varepsilon^{N}O\left(\mu_{\varepsilon,\xi}\right)= & o\left(\varepsilon^{N+2s+\zeta(N-2s)}\right), \\
    \varepsilon^{N}O\left(\left(\frac{\varepsilon}{d(\xi,\partial\Omega)}\right)^{(1+p)(N+2s)-N}\right)= & o\left(\varepsilon^{N+2s+\zeta(N-2s)}\right), \\
    \end{split}
    \end{equation*}
    then (\ref{pthmm2}) follows. Moreover,
    \begin{equation*}
    \frac{N+4s}{6(N+2s)}> \frac{N-2s}{3(N+2s)}\Leftrightarrow N< 8s,
    \end{equation*}
    and
    \begin{equation*}
    \frac{N+4s}{3[(1+p)(N+2s)-N]}> \frac{N+4s}{6[p(N+2s)-N/2]}\Leftrightarrow p>1,
    \end{equation*}
    thus (\ref{ptdefvm}) contains (\ref{defwj}) and (\ref{defw}) when $N<8s$ and $p>1$.
    }
    \end{remark}

    \begin{remark}\label{rem13s}{\rm
    We remark that it is essential to compare between $\tau_{\varepsilon,\xi}$ and $\varepsilon^{2N}B^2 H(\xi,\xi)$. Noticing the facts that $N+\frac{1}{3}(N+4s)=N+2s+\frac{1}{3}(N-2s)=\frac{4}{3}(N+s)$ and
    \begin{equation}\label{13sdx}
    \begin{split}
    N+\wp(N+4s)> & N+2s+\wp(N-2s),\ \ \mbox{for any}\ \wp>\frac{1}{3},  \\
    N+\wp(N+4s)< & N+2s+\wp(N-2s),\ \ \mbox{for any}\ \wp<\frac{1}{3},
    \end{split}
    \end{equation}
    thus (\ref{prothmx}), (\ref{prothmdx}) and (\ref{prothmdd}) hold.

    From (\ref{ptdefvm}),
    \begin{equation*}
     \begin{split}
     \frac{N+4s}{3[(1+p)(N+2s)-N]}<\frac{1}{3}\Leftrightarrow & p>1, \\
     \frac{6-2N-14s}{3(N+2-2s)}<\frac{1}{3}\Leftrightarrow & 3N+12s-4>0.
     \end{split}
    \end{equation*}
    Moreover, in Lemma \ref{lemel},
    \begin{equation*}
     \begin{split}
     \frac{N}{2}+2s>\frac{2N+2s}{3}\Leftrightarrow & N<8s.
     \end{split}
    \end{equation*}
    Furthermore, in Section \ref{sectrt}, we have stated that only for $N\leq 6s$, the energy functional associated to our problem is well defined. Combining above conditions with the initial condition $N>2s$, we can conclude that $N$ and $s$ are required satisfying 
    \begin{equation*}
    s\in (0,1)\quad\mbox{and}\quad 2s<N\leq 6,
    \end{equation*}
    i.e.,
    \begin{equation*}
     \begin{split}
     2\leq N\leq 5,\frac{N}{6}\leq s<1\quad \mbox{or}\quad N=1,\frac{1}{6}\leq s<\frac{1}{2}.
     \end{split}
    \end{equation*}
    }
    \end{remark}



    \end{document}